\documentclass[12pt, a4paper]{amsart}

\usepackage[english]{babel}
\usepackage{amsmath}
\usepackage{amssymb}
\usepackage{bm}
\usepackage{amscd} 
\usepackage{microtype} 
\usepackage{verbatim} 
\usepackage{color}  
\usepackage{tikz-cd}\usetikzlibrary{babel} 
\usepackage{enumitem}
\usepackage{mathtools} 
\usepackage{cite}
\usepackage{hyperref} 
\usepackage[initials,msc-links,backrefs]{amsrefs}
\usepackage[all]{xy}

\usepackage[OT2, T1]{fontenc}

\title[The profinite point of view]{Infinite groups from the\\ profinite point of view}

 \author[H. Kammeyer]{Holger Kammeyer}
 \author[S. Kionke]{Steffen Kionke}
 
 \address{Heinrich Heine University D{\"u}sseldorf, Faculty of Mathematics and Natural Sciences, Mathematical Institute, Germany}
 \email{holger.kammeyer@hhu.de}

 \address{Faculty of Mathematics and Computer Science, FernUniversit\"at in Hagen, Germany}
 \email{steffen.kionke@fernuni-hagen.de}
 

 \makeatletter
\@namedef{subjclassname@2020}{%
  \textup{2020} Mathematics Subject Classification}
\makeatother


\theoremstyle{plain}
\newtheorem{theorem}{Theorem}
\newtheorem{lemma}[theorem]{Lemma}
\newtheorem{corollary}[theorem]{Corollary}
\newtheorem{proposition}[theorem]{Proposition}

\theoremstyle{definition}
\newtheorem{definition}[theorem]{Definition}
\newtheorem*{definition*}{Definition}

\newtheorem{example}[theorem]{Example}
\newtheorem{question}[theorem]{Question}
\newtheorem{problem}[theorem]{Problem}
\newtheorem*{observation*}{Observation}

\numberwithin{equation}{section}
\numberwithin{theorem}{section}

\providecommand{\ignore}[1]{}

\providecommand{\N}{\mathbb{N}}
\providecommand{\R}{\mathbb{R}}
\providecommand{\Q}{\mathbb{Q}}
\providecommand{\Z}{\mathbb{Z}}

\providecommand{\C}{\mathbb{C}}
\newcommand{\fin}{\trianglelefteq_{\text{f.i.}}}
\newcommand{\ab}{^\text{ab}}
\newcommand{\T}{\mathcal{T}}

\DeclareMathOperator{\Aut}{Aut}
\DeclareMathOperator{\Sym}{Sym}
\DeclareMathOperator{\RiSt}{RiSt}
\DeclareMathOperator{\St}{St}

\newcommand*{\arXiv}[1]{ \href{http://www.arxiv.org/abs/#1}{arXiv:\textbf{#1}}}

\hyphenation{com-men-su-ra-bi-li-ty}

\begin{document}

\begin{abstract}
We survey recent work ranging around the question in how far a group, or a property of a group, is determined by the set of finite quotient groups.  Our focus lies on \(S\)-arithmetic groups, branch groups, and their relatives.
\end{abstract}

\maketitle

\section{Introduction}

Finite groups have been studied since the 19th century starting in particular from work of Arthur Cayley. Since then the theory of finite groups developed tremendously and led to the classification of finite simple groups. Today the classification and its consequences provide an invaluable framework to work with finite groups.

For infinite groups one finds quite the contrary: there is an untamable zoo of examples. Nevertheless, some families of infinite groups were carefully studied, in particular groups that are accessible by geometric methods. 

Given this state of affairs, it appears appealing to approach infinite groups using the theory of finite groups. This survey discusses the following guiding question.
\begin{question}\label{question:guiding}
Given an infinite group $\Gamma$. What can be said about $\Gamma$ if one knows all of its finite quotients?
\end{question}
Reformulated in terms of group actions: what do actions of $\Gamma$ on \emph{finite sets} tell us about $\Gamma$?

\medskip

For general groups this question is not very interesting, since there are plenty of infinite groups that do not have any non-trivial finite quotient. An elementary example is the additive group $(\Q,+)$ of rational numbers. Whereas this example is not finitely generated, there are finitely presented infinite groups without non-trivial finite quotients. The first example was given by Higman in 1951 \cite{Higman51}
\[
	\Gamma = \langle a ,b ,c ,d \mid a^{-1}ba=b^2, b^{-1}cb=c^2, c^{-1}dc=d^2, d^{-1}ad=a^2 \rangle.
\]
The right class of groups to study Question \ref{question:guiding} is the class of \emph{residually finite groups}, where every non-identity element can be mapped non-trivially to some finite quotient.
For example all finitely generated abelian groups and all finitely generated linear groups are residually finite. We will see more examples later in this survey.

Let $\Gamma$ be a residually finite group. Then $\Gamma$ embeds as a dense subgroup in the \emph{profinite completion} $\widehat{\Gamma}$. The profinite completion is a compact, totally disconnected Hausdorff topological group, i.e. a \emph{profinite group}. We shall recall some basic facts in the next section.
It was shown in \cite{Dixonetal} that two finitely generated residually finite groups $\Gamma_1$, $\Gamma_2$ have the same set of finite quotients if and only if  $\widehat{\Gamma}_1 \cong \widehat{\Gamma}_2$ as profinite groups. This allows for a reformulation of the guiding question:

\begin{question}
Let $\Gamma$ be a finitely generated, residually finite group. Which properties of $\Gamma$ can be inferred from the profinite completion $\widehat{\Gamma}$?
\end{question}
The class $\mathcal{RF}$ of finitely generated, residually finite groups is the broadest reasonable class to study this question. However, in some cases we will restrict attention to a subclass $\mathcal{C} \subseteq \mathcal{RF}$, e.g. finitely presented groups, arithmetic groups etc.

\begin{definition}
Let $\mathcal{C} \subseteq \mathcal{RF}$ be a class of finitely generated residually finite groups.
A property $P$ of groups is \emph{profinite within $\mathcal{C}$}, if for all $\Gamma_1, \Gamma_2 \in \mathcal{C}$ with $\widehat{\Gamma}_1 \cong \widehat{\Gamma}_2$ either both or none of the groups have $P$.
\end{definition}
For $\mathcal{C} = \mathcal{RF}$ we avoid mentioning the class and simply speak about \emph{profinite properties}. In a nutshell, profinite properties can be read off from the profinite completion (we are not asking for an explicit procedure to decide whether $\Gamma$ has $P$, though). Similarly, a \emph{profinite invariant} is a group invariant, that is determined by the profinite completion for finitely generated residually finite groups.

The purpose of this survey is to give an overview of profinite properties of groups. It turns out that profinite properties are rare. We will first discuss two methods to construct non-isomorphic groups with isomorphic profinite completions that were used to establish non-profiniteness of several properties. Only in the last section will we discuss properties that are actually profinite.
 Our focus lies on properties that are related to group cohomology and $\ell^2$-invariants. The survey will emphasize results that were supported by the DFG priority program "Geometry at Infinity" (DFG 441848266).  We acknowledge additional support from the RTG ``Algebro-Geometric Methods in Algebra, Arithmetic, and Topology'' (DFG 284078965).

\section{Preliminaries and basic concepts}
\subsection{Profinite completion}
For completeness we recall the following
\begin{definition}
A group $\Gamma$ is \emph{residually finite} if for every $\gamma \neq 1_\Gamma \in \Gamma$ there is a finite group $Q$ and a homomorphism $\phi\colon \Gamma \to Q$ with $\phi(\gamma) \neq 1_Q$.
\end{definition}
Put differently, for every element $\gamma \neq 1$ there is a finite index normal subgroup $N \fin \Gamma$ such that $\gamma \not\in N$. Here and throughout, $N \fin \Gamma$ means that $N$ is a normal subgroup of finite index in $\Gamma$. 

Let $\Gamma$ be a residually finite group. The finite quotients of $\Gamma$ correspond to the finite index normal subgroups of $\Gamma$. If $N \subseteq M$ are two normal subgroups of $\Gamma$, then there is a canonical projection $\pi_{N,M}\colon \Gamma/N \to \Gamma/M$. With these projections, the finite quotients of $\Gamma$ form an inverse system of finite groups. The inverse limit
\[
	\widehat{\Gamma} = \varprojlim_{N \fin \Gamma} \Gamma/N
\]
is called the \emph{profinite completion} of $\Gamma$. It is a profinite group when equipped with the inverse limit topology, where the finite quotients carry the discrete topology. The projections $\pi_N \colon \Gamma \to \Gamma/N$ induce a homomorphism $\iota \colon \Gamma \to \widehat{\Gamma}$. It is easy to see that the residual finiteness of $\Gamma$ entails that $\iota$ is injective. We will always identify $\Gamma$ with the image $\iota(\Gamma)$ and think of $\Gamma$ as a subgroup of its profinite completion.
Since the image of $\Gamma$ surjects onto the finite continuous quotients of $\widehat{\Gamma} \to \Gamma/N$ the image $\iota(\Gamma)$ is a dense subgroup of $\widehat{\Gamma}$.

\begin{example} \label{example:profinite-integers}
Let $\Gamma = \Z$. Using the Chinese remainder theorem, one can check that the profinite completion is 
\[
	\widehat{\Z} = \varprojlim_{n} \Z/n\Z \cong \prod_{p \text{ prime}} \Z_p
\]
where $\Z_p$ denotes the additive group of the $p$-adic integers.
\end{example}

The profinite completion has the following universal property:
\begin{lemma}[Universal property]
Let $\Gamma$ be a residually finite group and let $K$ be a profinite group.
For every homomorphism $\phi \colon \Gamma \to K$ there is a unique continuous homomorphism $\hat{\phi} \colon \widehat{\Gamma} \to K$ with $\hat{\phi}|_\Gamma = \phi$.
\end{lemma}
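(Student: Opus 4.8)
The plan is to exploit the fact that both sides are inverse limits of finite groups and argue by functoriality. First I would recall the standard fact that a profinite group $K$ admits a neighbourhood basis of the identity consisting of open normal subgroups, so that $K \cong \varprojlim_{U} K/U$ where $U$ ranges over the open normal subgroups of $K$ and each quotient $K/U$ is finite and discrete. Given a homomorphism $\phi\colon \Gamma \to K$ and such a $U$, the composite $\Gamma \to K \to K/U$ has kernel $N_U := \phi^{-1}(U)$; since it embeds $\Gamma/N_U$ into the finite group $K/U$, we get $N_U \fin \Gamma$, and hence an induced injective homomorphism $\overline{\phi}_U \colon \Gamma/N_U \hookrightarrow K/U$.

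Next I would assemble these maps into a homomorphism on profinite completions. Composing $\overline{\phi}_U$ with the canonical projection $\widehat{\Gamma} \to \Gamma/N_U$, which is continuous by the definition of the inverse limit topology, yields a continuous homomorphism $\psi_U \colon \widehat{\Gamma} \to K/U$. If $U \subseteq V$ are open normal subgroups of $K$, then $N_U \subseteq N_V$, and the square relating $\overline{\phi}_U$, $\overline{\phi}_V$ and the two transition projections commutes; so the family $(\psi_U)_U$ is compatible with the transition maps of the system defining $K = \varprojlim_U K/U$. By the universal property of the inverse limit it determines a unique continuous homomorphism $\hat{\phi}\colon \widehat{\Gamma} \to K$ whose composition with $K \to K/U$ equals $\psi_U$ for every $U$.

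Then I would verify that $\hat{\phi}|_\Gamma = \phi$: for $\gamma \in \Gamma$ and any open normal subgroup $U$ of $K$, the $U$-component of $\hat{\phi}(\gamma)$ is $\overline{\phi}_U(\gamma N_U) = \phi(\gamma)U$, which is precisely the $U$-component of $\phi(\gamma)$; since this holds for every $U$ and $K = \varprojlim_U K/U$, we conclude $\hat{\phi}(\gamma) = \phi(\gamma)$. Uniqueness is then immediate from topology: $\Gamma$ is dense in $\widehat{\Gamma}$ and $K$, being profinite, is Hausdorff, so any two continuous homomorphisms $\widehat{\Gamma}\to K$ that agree on $\Gamma$ agree on its closure $\widehat{\Gamma}$.

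I do not expect a genuine obstacle here. The only points needing a little care are the standard facts that open normal subgroups form a neighbourhood basis of the identity in a profinite group, so that the inverse limit presentation of $K$ is available, and that $\phi^{-1}(U)$ really has finite index in $\Gamma$; both are elementary. The remaining work, namely checking compatibility of the $\psi_U$ and the identity $\hat{\phi}|_\Gamma = \phi$, is a routine diagram chase.
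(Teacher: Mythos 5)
Your proposal is correct and takes essentially the same route as the paper's proof: both reduce to the finite quotients of $K$, observe that a homomorphism to a finite group factors through some $\Gamma/N$ with $N \fin \Gamma$ and hence through $\widehat{\Gamma}$, assemble the resulting maps via the universal property of the inverse limit, and derive uniqueness from density of $\Gamma$ together with $K$ being Hausdorff. The only cosmetic difference is that you work with the canonical presentation $K = \varprojlim_U K/U$ over open normal subgroups and spell out the induced embedding $\Gamma/N_U \hookrightarrow K/U$, whereas the paper first treats the case of finite $K$ as a base step and then invokes it for an arbitrary inverse-system presentation of $K$.
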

\begin{proof}
The uniqueness is immediate from the density of $\Gamma$ in $\widehat{\Gamma}$.

For the existence, assume first that $K$ is finite. Then the kernel of $\phi$ has finite index in $\Gamma$ and the assertion is clear from the construction of $\widehat{\Gamma}$. Suppose that $K$ is profinite and write $K=\varprojlim_{i\in I} K_i$ as an inverse system of finite groups with projections $\pi_i \colon K \to K_i$. 
We apply the universal property to the homomorphisms $\phi_i = \pi_i \circ \phi \colon \Gamma \to K_i$ to obtain continuous homomorphisms $\hat{\phi}_i \colon \widehat{\Gamma} \to K_i$.
The universal property of the inverse limit gives us a unique continuous homomorphism $\hat{\phi} \colon \widehat{\Gamma} \to K$ with $\pi_i \circ \hat{\phi} = \hat{\phi}_i$. Then $\pi_i \circ \hat{\phi}|_\Gamma = \pi_i \circ \phi$ for all $i \in I$ and this implies $\hat{\phi}|_\Gamma = \phi$.
\end{proof}

Let $\Gamma\ab= \Gamma/[\Gamma,\Gamma]$ denote the abelianisation of $\Gamma$. For a profinite group $G$, the notation $G\ab = G/\overline{[G,G]}$ denotes the profinite abelianisation (i.e., the largest continuous abelian factor group). Using the universal properties, one can check that 
profinite completion and abelianisation commute.
\begin{lemma}\label{lem:profinite-commutes-ab}
Let $\Gamma$ be a residually finite group. There is a canonical isomorphism
\[
	\widehat{\Gamma}\ab \cong \widehat{(\Gamma\ab)}.
\]
\end{lemma}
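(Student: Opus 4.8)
The plan is to produce mutually inverse continuous homomorphisms between the two sides using only the universal property of the profinite completion proved above and the defining property of abelianisation. Two preliminary remarks are in order. First, if $\Gamma$ is not finitely generated then $\Gamma\ab$ need not be residually finite, so one should read $\widehat{(\Gamma\ab)}$ as the profinite completion of an arbitrary group, i.e.\ $\varprojlim_{N\fin\Gamma\ab}(\Gamma\ab)/N$ together with the canonical --- possibly non-injective --- homomorphism into it; the universal property and its proof given above go through verbatim in this generality, since they reduce everything to the case of a finite target. Second, a quotient of a profinite group by a closed normal subgroup is again profinite, so $\widehat\Gamma\ab = \widehat\Gamma/\overline{[\widehat\Gamma,\widehat\Gamma]}$ is a profinite abelian group.

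To build $\beta\colon\widehat\Gamma\ab\to\widehat{(\Gamma\ab)}$, I would compose the projection $\Gamma\to\Gamma\ab$ with $\Gamma\ab\to\widehat{(\Gamma\ab)}$; this is a homomorphism from $\Gamma$ to a profinite group, so by the universal property it extends to a continuous homomorphism $\widehat\Gamma\to\widehat{(\Gamma\ab)}$, which, having abelian Hausdorff target, kills $[\widehat\Gamma,\widehat\Gamma]$ together with its closure and hence factors through a continuous homomorphism $\beta$. To build $\alpha\colon\widehat{(\Gamma\ab)}\to\widehat\Gamma\ab$, note that the composite $\Gamma\to\widehat\Gamma\to\widehat\Gamma\ab$ has abelian target, hence factors through a homomorphism $\Gamma\ab\to\widehat\Gamma\ab$ into a profinite group; the universal property of $\widehat{(\Gamma\ab)}$ then supplies a continuous extension $\alpha$.

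It remains to see that $\alpha$ and $\beta$ are inverse to one another, and this is the standard density argument: $\beta\circ\alpha$ and $\mathrm{id}$ are continuous self-maps of $\widehat{(\Gamma\ab)}$ agreeing on the dense image of $\Gamma\ab$, while $\alpha\circ\beta$ and $\mathrm{id}$ are continuous self-maps of $\widehat\Gamma\ab$ agreeing on the dense image of $\Gamma$, and continuous maps into a Hausdorff group that agree on a dense subset coincide; canonicity is automatic since every map was produced by a universal property. I do not expect a genuine obstacle here --- the only points that need care are not presupposing residual finiteness of $\Gamma\ab$ and remembering to divide $\widehat\Gamma$ by the \emph{closure} of its commutator subgroup, which is exactly what allows the map of the previous paragraph to descend. (More slickly: $A\mapsto\widehat{(A\ab)}$ and $A\mapsto\widehat A\ab$ are both left adjoint to the inclusion of profinite abelian groups into the category of all groups, and are therefore canonically isomorphic.)
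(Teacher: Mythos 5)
The paper does not actually supply a proof of this lemma; it merely remarks beforehand that ``using the universal properties, one can check that profinite completion and abelianisation commute,'' and leaves the verification to the reader. Your argument is precisely that verification, carried out carefully, so there is no competing approach to compare against: you are filling in exactly the gap the authors deliberately left.

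Your proof is correct. The diagram-chasing is sound: the characterising identities $\beta\circ p\circ\iota_\Gamma=\iota_{\Gamma\ab}\circ\pi$ and $\alpha\circ\iota_{\Gamma\ab}\circ\pi=p\circ\iota_\Gamma$ (with $p\colon\widehat\Gamma\to\widehat\Gamma\ab$ the quotient map, $\pi\colon\Gamma\to\Gamma\ab$, and $\iota$'s the canonical maps) combine to show $\alpha\circ\beta$ and $\beta\circ\alpha$ agree with the identities on dense subgroups, and a continuous quotient of a dense subgroup is dense, so the Hausdorff density argument closes the loop. Your two preliminary remarks are exactly the right things to flag: the universal property and its proof use only density of the canonical image of $\Gamma$, not injectivity, so residual finiteness of $\Gamma\ab$ is not needed (and indeed the lemma as stated does not assume $\Gamma$ finitely generated, so one cannot take it for granted); and one must quotient $\widehat\Gamma$ by the \emph{closure} of the commutator subgroup for the descent to be to a profinite group. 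The closing adjoint-functor remark is also correct and is arguably the cleanest way to see canonicity: both $A\mapsto\widehat{A\ab}$ and $A\mapsto\widehat A\ab$ are left adjoint to the inclusion of profinite abelian groups, hence uniquely naturally isomorphic.
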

The open subgroups of $\widehat{\Gamma}$ are in bijective correspondence with the finite index subgroups of $\Gamma$. More precisely~\cite{Ribes-Zalesskii:profinite-groups}*{Proposition~3.2.2}:
\begin{lemma}\label{lem:correspondence}
Let $\Gamma$ be a residually finite group.
\begin{enumerate}
\item If $H \leq \Gamma$ has finite index,  then the closure $\overline{H} \subseteq \widehat{\Gamma}$ is open and $\widehat{H} = \overline{H}$.
\item If $U \leq_o \widehat{\Gamma}$ is an open subgroup, then $H = U \cap \Gamma$ has finite index in $\Gamma$. 
\end{enumerate}
These constructions induce a bijection between the finite index (normal) subgroups of $\Gamma$ and the open (normal) subgroups of $\widehat{\Gamma}$.
\end{lemma}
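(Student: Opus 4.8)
The plan is to prove the two bulleted assertions and then check that $H\mapsto\overline{H}$ and $U\mapsto U\cap\Gamma$ are mutually inverse and preserve normality. For (1), I would first replace $H$ by its normal core $N=\bigcap_{\gamma\in\Gamma}\gamma H\gamma^{-1}$, a finite-index normal subgroup of $\Gamma$ contained in $H$. Since the finite group $\Gamma/N$ is profinite, the universal property supplies a continuous surjection $q_N\colon\widehat{\Gamma}\to\Gamma/N$ extending $\pi_N$. Put $U:=q_N^{-1}(\pi_N(H))$. Then $U$ is open, being a union of finitely many cosets of the open subgroup $\ker q_N$, hence also closed, and using $N\leq H$ one checks directly that $U\cap\Gamma=\pi_N^{-1}(\pi_N(H))=H$. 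Since $\Gamma$ is dense in $\widehat{\Gamma}$ and $U$ is open, $H=U\cap\Gamma$ is dense in $U$; as $U$ is closed this forces $\overline{H}=U$, which is therefore open, with $[\widehat{\Gamma}:\overline{H}]=[\Gamma/N:\pi_N(H)]=[\Gamma:H]$.

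The one genuinely non-formal point of (1) is the identification $\widehat{H}=\overline{H}$. As $\overline{H}$ is profinite and contains $H$ as a dense subgroup, the universal property gives a continuous surjection $\psi\colon\widehat{H}\twoheadrightarrow\overline{H}$, and $\psi$ is an isomorphism exactly when the subspace topology that $H$ inherits from $\widehat{\Gamma}$ coincides with its own profinite topology, i.e.\ when every $N'\fin H$ contains $M\cap H$ for some $M\fin\Gamma$. This is precisely where the finite-index hypothesis is used: given $N'\fin H$ one has $[\Gamma:N']=[\Gamma:H]\,[H:N']<\infty$, so the normal core $M$ of $N'$ in $\Gamma$ is a finite-index normal subgroup of $\Gamma$ contained in $N'\subseteq H$, whence $M\cap H=M\subseteq N'$. (Concretely, $\ker\psi$ lies in the closure of $N'$ in $\widehat{H}$ for every $N'$, because an open subgroup of $\widehat{H}$ meeting $H$ inside $N'$ is the closure of that intersection; intersecting over all $N'$ kills $\ker\psi$.) Thus $\psi$ is a continuous bijection of compact Hausdorff groups, hence an isomorphism. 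I regard this verification --- essentially \cite{Ribes-Zalesskii:profinite-groups}*{Lemma~3.2.1} --- as the main obstacle, since the conclusion $\widehat H=\overline H$ fails for subgroups of infinite index.

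For (2), note that an open subgroup $U$ of the compact group $\widehat{\Gamma}$ has finite index (its cosets form an open cover of $\widehat{\Gamma}$) and contains an open normal subgroup $V\trianglelefteq\widehat{\Gamma}$, for instance its normal core, a finite intersection of conjugates of $U$. The composite $\Gamma\hookrightarrow\widehat{\Gamma}\to\widehat{\Gamma}/V$ has dense image in a finite discrete group, hence is onto, so $\Gamma\cap V$ has finite index in $\Gamma$; therefore so does $H=U\cap\Gamma\supseteq\Gamma\cap V$. Moreover the natural injection $\Gamma/H\hookrightarrow\widehat{\Gamma}/U$ of coset spaces is surjective, since each coset $gU$ is open and meets the dense subgroup $\Gamma$, so $[\Gamma:H]=[\widehat{\Gamma}:U]$.

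Finally, for the bijection: by the computation in (1) the composite $H\mapsto\overline{H}\mapsto\overline{H}\cap\Gamma$ is the identity on finite-index subgroups, and conversely for an open $U\leq_o\widehat{\Gamma}$ the density of $\Gamma$ in the open-and-closed set $U$ gives $\overline{U\cap\Gamma}=U$; hence the two maps are mutually inverse bijections. Normality is preserved: if $H\trianglelefteq\Gamma$ then $\Gamma$ normalises $\overline{H}$, and the normaliser of the closed subgroup $\overline{H}$ is closed and contains the dense subgroup $\Gamma$, hence equals $\widehat{\Gamma}$; the converse implication $U\trianglelefteq\widehat{\Gamma}\Rightarrow U\cap\Gamma\trianglelefteq\Gamma$ is immediate. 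This gives the asserted bijection between finite-index (normal) subgroups of $\Gamma$ and open (normal) subgroups of $\widehat{\Gamma}$.
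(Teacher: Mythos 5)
The paper does not prove this lemma itself but simply cites \cite{Ribes-Zalesskii:profinite-groups}*{Proposition~3.2.2}, so there is no argument in the text to compare against. Your proof is correct and self-contained: the construction $U=q_N^{-1}(\pi_N(H))$ via the normal core cleanly gives openness of $\overline H$ together with $U\cap\Gamma=H$ and the index equality, part~(2) and the mutual-inverse/normality checks are routine and correctly executed, and you rightly isolate $\widehat H=\overline H$ as the only non-formal step. Your treatment of that step is sound: the essential point is that the profinite topology on $H$ coincides with the subspace topology inherited from $\widehat\Gamma$, which you reduce to the fact that any $N'\fin H$ contains the normal core $M$ of $N'$ in $\Gamma$ (here the finite-index hypothesis enters, since $[\Gamma:N']<\infty$), so that $\overline M\cap H=M\subseteq N'$ with $\overline M$ open in $\widehat\Gamma$; together with compactness of $\widehat H$ and the Hausdorffness of $\overline H$ this forces $\psi$ to be a homeomorphic isomorphism. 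The parenthetical justification of $\ker\psi=1$ is compressed but correct --- the open subgroups $\psi^{-1}(\overline M\cap\overline H)$ contain $\ker\psi$, are the closures in $\widehat H$ of their intersections with the dense subgroup $H$, and these intersections equal $M\subseteq N'$, so $\ker\psi\subseteq\overline{N'}^{\widehat H}$ for every $N'\fin H$ and hence is trivial. This is in substance the argument of \cite{Ribes-Zalesskii:profinite-groups}*{Lemma~3.2.1 and Proposition~3.2.2}.
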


\subsection{Profinite rigidity}
Here we will briefly discuss the following central question: When is a group uniquely determined up to isomorphism by its profinite completion? To be more concise we start with the following
\begin{definition}
Let $\mathcal{C} \subseteq \mathcal{RF}$ be a class of finitely generated residually finite groups.
A group $\Gamma \in \mathcal{C}$ is \emph{profinitely rigid in $\mathcal{C}$} if all $\Delta \in \mathcal{C}$ with $\widehat{\Delta} \cong \widehat{\Gamma}$ satisfy $\Delta \cong \Gamma$.
\end{definition}
If a finitely generated residually finite group is profinitely rigid in $\mathcal{RF}$, then one says that $\Gamma$ is profinitely rigid in the \emph{absolute sense}.
\begin{example}\label{ex:abelian}
Let $\mathcal{AB}$ be the class of finitely generated abelian groups. Then every $\Gamma \in \mathcal{AB}$ is profinitely rigid in  $\mathcal{AB}$. This is easy to check using the structure theorem for finitely generated abelian groups. Let $\Gamma = \Z^r\times F$ (with $F$ finite abelian), then $\widehat{\Gamma} \cong \widehat{\Z}^r \times F$. The group $F$ is exactly the torsion subgroup of this abelian group. By looking at the number of homomorphisms to $\Z/p\Z$ for a large prime $p$, one can recover the number $r$.
We will see below that finitely generated abelian groups are profinitely rigid in the absolute sense, by showing that being abelian is a profinite property.
\end{example}
Absolute profinite rigidity appears to be a rare phenomenon. It is known since the seventies that already virtually abelian groups \cite{Baumslag74} or finitely generated nilpotent groups \cite{Pickel71} are in general not rigid. 
However, for finitely generated virtually nilpotent groups, there are at most finitely many isomorphism classes of groups that give rise to isomorphic profinite completions \cite{Pickel73}.

Bridson, McReynolds, Reid and Spitler  showed that certain fundamental groups of hyperbolic $3$-manifolds and certain hyperbolic triangle groups are profinitely rigid in the absolute sense \cite{BMRRS20,BMRRS21}.

We have to mention the two major open problems in the field:
\begin{problem}
Are non-abelian free groups profinitely rigid in the absolute sense?
\end{problem}

\begin{problem} \label{problem:slnz-rigid}
Is $\mathrm{SL}_n(\Z)$ for $n \geq 3$ profinitely rigid in the absolute sense?
\end{problem} 

\subsection{Grothendieck pairs}
Every homomorphism $f \colon \Delta \to \Gamma$ between groups induces a continuous homomorphism $\hat{f} \colon \widehat{\Delta} \to \widehat{\Gamma}$ between the profinite completions.
In 1970 Grothendieck published an influential paper \cite{Grothendieck70} where he studied homomorphisms between finitely generated groups  $f \colon \Delta \to \Gamma$ such that $\hat{f}$ is an isomorphism. Among other things, he showed that in this case the categories of representations of $\Delta$ and $\Gamma$ on finitely presented $A$-modules (where $A$ is an arbitrary commutative ring) are equivalent.
He also raised the question \cite[\S 3.1]{Grothendieck70} whether $f$ is an isomorphism provided that $\hat{f}$ is an isomorphism and $\Delta, \Gamma$ are residually finite and \emph{finitely presented}.

If $\Gamma$ and $\Delta$ are residually finite and $\hat{f}$ is injective, then $f$ is injective and hence $f\colon \Delta \to f(\Delta)$ is an isomorphism. This leads to the following definition.
\begin{definition}
A pair $(\Gamma, \Delta)$ consisting of a residually finite, finitely generated group $\Gamma$ and a finitely generated proper subgroup $\Delta \subsetneq \Gamma$ such that the inclusion $\iota \colon \Delta \to \Gamma$ induces an isomorphism $\hat{\iota} \colon \widehat{\Delta} \to \widehat{\Gamma}$ is called a \emph{Grothendieck pair.}
\end{definition}
The existence of Grothendieck pairs was first established by Platonov--Tavgen \cite{PlatonovTavgen} in 1986. The first Grothendieck pair of finitely presented groups, answering the original question of Grothendieck, was constructed by Bridson--Grunewald in 2004 \cite{BridsonGrunewald04}.

\medskip

The notion of Grothendieck pairs allows us to formulate a refined question: What properties are shared by the groups of a Grothendieck pair? Or on the contrary: How different can the groups in a Grothendieck pair be?

\section{Profinitely isomorphic $S$-arithmetic groups}

The class of \(S\)-arithmetic groups provides an illuminating test ground for the profiniteness of a given property or invariant of groups.  For the most hands-on definition of an arithmetic group, we start with a group of matrices \(\mathbf{G} \subseteq \mathbf{GL_n}(\Q)\) defined by the zero locus of a set of rational polynomials in the matrix coefficients. We write \(\mathbf{G}(\Z) = \mathbf{G} \cap \mathbf{GL_n}(\Q)\). Then a subgroup \(\Gamma \le \mathbf{G}\) is called \emph{arithmetic} if \(\Gamma\) and \(\mathbf{G}(\Z)\) are commensurable (i.e., the intersection \(\Gamma \cap \mathbf{G}(\Z)\) has finite index in $\Gamma$ and $\mathbf{G}(\Z)$).  The group \(\Gamma\) comes endowed with the inverse system of \emph{principal congruence subgroups}: the kernels of the coefficient reduction morphisms \(\Gamma \rightarrow \mathbf{GL_n}(\Z / n\Z)\).  If the principal congruence subgroups are cofinal in the system of all finite index normal subgroups of \(\Gamma\), we say that \(\Gamma\) has the \emph{congruence subgroup property (CSP)}.  Notable examples of arithmetic groups enjoying the congruence subgroup property are \(\Gamma = \operatorname{SL_n}(\Z)\) for \(n \ge 3\) and the integral spinor group \(\Gamma = \operatorname{Spin}(q)(\Z)\) of a non-degenerate integral quadratic form \(q\) with Witt index at least two.  The congruence subgroup property has the virtue that the profinite completion has a transparent description.  Indeed, using Example~\ref{example:profinite-integers}, we see that
\[ \widehat{\operatorname{SL}_n(\Z)} \cong \operatorname{SL_n}(\widehat{\Z}) \cong \prod_p \operatorname{SL_n}(\Z_p) \quad \text{and} \quad \widehat{\operatorname{Spin}(q)(\Z)} \cong \prod_p \operatorname{Spin}(q)(\Z_p). \]
This observation can be used to construct profinitely isomorphic arithmetic groups by a trick that was first employed by M.\,Aka~\cite{Aka:kazhdan}.

\begin{lemma}
  For every prime number \(p\), the quadratic form \(\langle 1, 1, 1, 1 \rangle\) is isometric to \(\langle -1, -1, -1, -1 \rangle\) over the \(p\)-adic integers \(\Z_p\).
\end{lemma}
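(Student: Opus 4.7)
The plan is to reduce the problem to finding $(a,b,c,d) \in \Z_p^4$ with $a^2+b^2+c^2+d^2 = -1$, and then construct the isometry explicitly via quaternion multiplication. Given such a quadruple, let $M$ denote the matrix of left multiplication by $q = a + bi + cj + dk$ on the free $\Z_p$-module $\Z_p\langle 1, i, j, k\rangle$ in the standard ordered basis. A short check gives $M^T = M_{\bar q}$, so from the identity $M_{\bar q} M_q = M_{\bar q q} = N(q) I_4$ one reads off $M^T M = -I_4$, while $\det M = N(q)^2 = 1 \in \Z_p^\times$ ensures $M \in \mathrm{GL}_4(\Z_p)$. This matrix realizes the desired isometry from $\langle -1,-1,-1,-1 \rangle$ to $\langle 1,1,1,1 \rangle$ over $\Z_p$.

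For odd $p$ we would produce the quadruple by a pigeonhole argument. The sets $\{x^2 : x \in \F_p\}$ and $\{-1 - y^2 : y \in \F_p\}$ both have cardinality $(p+1)/2$, so they must meet, yielding $x_0, y_0 \in \F_p$ with $x_0^2 + y_0^2 + 1 \equiv 0 \pmod p$. Since $p$ is odd at least one of $x_0, y_0$ is nonzero, say $x_0$. Hensel's lemma applied to $g(x) = x^2 + y_0^2 + 1$ at $x_0$ (with $g'(x_0) = 2x_0 \in \Z_p^\times$) then produces $a \in \Z_p$ with $a^2 + y_0^2 = -1$, and taking $b = y_0$, $c = d = 0$ finishes this case.

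The case $p = 2$ is the main obstacle: the counting argument over $\F_2$ collapses and only $\{0, 1, 4\}$ occur among squares modulo $8$. However, the congruence $-1 \equiv 7 = 1 + 1 + 1 + 4 \pmod 8$ suggests fixing $b = c = 1$ and $d = 2$, reducing matters to solving $x^2 = -7$ in $\Z_2$. Here $v_2(1^2 + 7) = 3$ while $v_2(2 \cdot 1) = 1$; since $3 > 2 \cdot 1$, the sharp form of Hensel's lemma lifts $x_0 = 1$ to an honest root $a \in \Z_2$ of $x^2 + 7$. The quadruple $(a, 1, 1, 2)$ then feeds back into the quaternion construction of the first paragraph to supply the isometry at $p = 2$, completing the proof.
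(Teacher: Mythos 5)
Your proof is correct, and it takes a genuinely different route from the paper. The paper handles the three cases \(p \equiv 1 \ (4)\), \(p \equiv 3 \ (4)\), and \(p = 2\) by separate arguments: for \(p \equiv 1 \ (4)\) it uses \(\sqrt{-1} \in \Z_p\) to get \(\langle 1 \rangle \cong \langle -1 \rangle\) directly; for \(p \equiv 3 \ (4)\) it invokes the standard fact \(\langle 1,1 \rangle \cong_{\Z_p} \langle -1,-1 \rangle\) from the local theory of quadratic forms; and only for \(p = 2\) does it display an explicit \(4 \times 4\) matrix built from \(\sqrt{-7}\) — a matrix which, on inspection, is of the quaternion-multiplication type you use. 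Your observation is that this last device works uniformly: once one has \(q = a + bi + cj + dk\) over \(\Z_p\) with \(N(q) = -1\), the identities \(M_q^T = M_{\bar q}\), \(M_{\bar q} M_q = N(q) I_4\), and \(\det M_q = N(q)^2 = 1\) produce the isometry in one stroke, and the problem reduces to representing \(-1\) by the sum-of-four-squares form over \(\Z_p\). Your pigeonhole-plus-Hensel argument for odd \(p\) and the sharp Hensel lift of \(x^2 + 7\) at \(p = 2\) then supply the needed quaternion in all cases. The trade-off is that your argument is longer to write out in full but conceptually uniform, avoiding the case split on \(p \bmod 4\) and making explicit where the paper's mysterious \(p = 2\) matrix comes from; the paper's version is shorter by outsourcing the odd-prime cases to standard facts about \(\Z_p\)-lattices, at the cost of treating \(p=2\) as a one-off computation.
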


\begin{proof}
  If \(p \equiv 1 \ (4)\), this follows from \(\sqrt{-1} \in \Z_p\) because \(\Z_p\) contains the \((p-1)\)-st roots of unity.  If \(p \equiv 3 \ (4)\), standard quadratic form theory shows that \(\langle 1, 1 \rangle \cong_{\Z_p} \langle -1, -1 \rangle\) and for \(p = 2\), we can use \(\sqrt{-7} \in \Z_2\) to exhibit the explicit isometry
  \[ \begin{pmatrix}
      2 & 1 & 1 & \sqrt{-7} \\
      -1 & 2 & -\sqrt{-7} & 1 \\
      -1 & \sqrt{-7} & 2 & -1 \\
      -\sqrt{-7} & -1 & 1 & 2
      \end{pmatrix}. \qedhere \]
\end{proof}

Hence, if \(q_{n,m}\) denotes the diagonal quadratic form with \(n\) coefficients equal to ``\(1\)'' and \(m\) coefficients equal to ``\(-1\)'', then
\[ \widehat{\operatorname{Spin}(q_{7,2})(\Z)} \cong \widehat{\operatorname{Spin}(q_{3,6})(\Z)}. \]
The groups \(\operatorname{Spin}(q_{7,2})(\Z)\) and \(\operatorname{Spin}(q_{3,6})(\Z)\) are however not isomorphic themselves.  For example, the simple Lie groups in which they are lattices are unique by superrigidity and they have real rank two and three, respectively.  Such examples can be used to show that a number of properties and invariants of groups are not profinite as we will see next.

\subsection{Bounded cohomology} \label{subsection:bounded}
Bounded cohomology \(H^*_b(\Gamma; \R)\) is the variant of real group cohomology that arises from considering only bounded cochains of \(\Gamma\).  An informative account can be found in~\cite{Frigerio:bounded-cohomology}.  By a theorem of Burger--Monod and Monod--Shalom~\cite{Monod-Shalom:cocycle}*{Theorem~1.4}, a lattice \(\Gamma \le G\) in a simple Lie group has \(H^2_b(\Gamma; \R) \neq 0\) if and only if \(G\) defines a hermitian symmetric space.  Since the symmetric space of \(\operatorname{SO}^0(7,2)\) is hermitian while the symmetric space of \(\operatorname{SO}^0(3,6)\) is not, the above example shows that second bounded cohomology is not a profinite invariant.  However, not all hermitian symmetric spaces give rise to counterexamples and the complete list of those who do is given in~\cite{Echtler-Kammeyer:bounded}.

\subsection{Higher \(\ell^2\)-Betti numbers} \label{sec:higherl2}
Instead of bounded (co-)chains, one can also consider square integrable chains resulting in the \(\ell^2\)-homology of groups.  In the \emph{reduced} case, the \(\ell^2\)-homology is uniquely determined by the \emph{von Neumann dimension} which gives rise to the \(\ell^2\)-Betti numbers \(b^{(2)}_n(\Gamma)\) of a group \(\Gamma\).  The precise construction of these invariants can be found in~\cite{Lueck:l2} and~\cite{Kammeyer:l2}.  By classical results of Borel, the group \(\operatorname{Spin}(q_{7,2})(\Z)\) has all \(\ell^2\)-cohomology concentrated in degree seven whereas \(\operatorname{Spin}(q_{3,6})(\Z)\) has \(\ell^2\)-cohomology concentrated in degree nine, so \(\ell^2\)-Betti numbers are not profinite in general.  A closer investigation refining the above construction shows that the two \emph{\(S\)-arithmetic groups}
\[ \textstyle \Gamma^n_{\pm} = \operatorname{Spin}(\langle \pm 1, \pm 1, \pm 1, \pm p_1 \cdots p_n, 3 \rangle)\left(\Z\left[\frac{1}{p_1 \cdots p_n}\right]\right), \]
satisfy \(\widehat{\Gamma^n_+} \cong \widehat{\Gamma^n_-}\) and have non-vanishing \(\ell^2\)-Betti numbers in degree \(n\) and \(n+2\), respectively \cite{Kammeyer-Sauer:spinor}.  Here \(p_1, \ldots, p_n\) with \(n \ge 2\) denote the first \(n\) primes in the arithmetic progression \(89 + 24\mathbb{N}\).  So in fact, all higher \(\ell^2\)-Betti numbers fail to be profinite invariants.  The first \(\ell^2\)-Betti number, however, is well-known to be profinite among finitely presented groups as we recall in Theorem~\ref{thm:first-l2-betti-profinite} below.

\subsection{Kazhdan's property~(T) and higher variants}
In the original paper~\cite{Aka:kazhdan}, M.\,Aka gives
the examples
\[ \operatorname{Spin}(q_{5,2})(\mathbb{Z}[\sqrt{2}]) \quad \text{and} \quad \operatorname{Spin}(q_{1,6})(\mathbb{Z}[\sqrt{2}]) \]
of two profinitely isomorphic groups with and without \emph{Kazhdan's property (T)}.  Recently, U.\,Bader and R.\,Sauer~\cite{Bader-Sauer:higher-kazhdan} have investigated higher versions of property (T), defined in terms of the range of vanishing continuous cohomology with coefficients in unitary represenations.  Since the groups \(\operatorname{Spin}(p,q)\) with even \(p \cdot q\) have discrete series representations, they have nonvanishing continuous cohomology in degree \(pq/2\).  It thus follows easily from the results of Bader--Sauer \cite{Bader-Sauer:higher-kazhdan}*{Theorem~B} that higher Kazhdan properties are not profinite either.

\subsection{Serre's property FA}
In another direction, a group with Kazhdan's property (T) also has \emph{Serre's property FA}: every action on a tree has a global fixed point.  Aka's result has the following strengthening: Property FA is not profinite~\cite{Cheetham-West-et-al:property-fa}.  Fix a prime \(p\) and let \(A\) be the unique quaternion algebra over \(\Q\) that ramifies precisely at \(p\) and at the infinite place. Then both the groups
\[ \textstyle \Gamma = \operatorname{SL}_2(A) \cap \prod_{q \neq p} \operatorname{SL}_4(\Z_q) \quad \text{and} \quad \Delta = \operatorname{SL}_4\left(\Z\left[\frac{1}{p}\right]\right) \]
are \(S\)-arithmetic groups with the congruence subgroup property, so that
\[ \widehat{\Gamma} \cong \prod_{q \neq p} \operatorname{SL}_4(\Z_q) \cong \widehat{\Delta}. \]
But \(\Delta\) is a lattice in the higher rank Lie group \(\operatorname{SL}_4(\R) \times \operatorname{SL}_4(\Q_p)\), so \(\Delta\) has (T), hence FA.  In contrast, the group \(\Gamma\) acts on the ``tree of the group \(\operatorname{SL}_2(\mathbb{H}_p)\)'' without global fixed points.  Here, \(\mathbb{H}_p\) denotes the unique quaternion division algebra over \(\Q_p\) and the construction of the tree is similar to the construction of the Bruhat--Tits tree of \(\operatorname{SL}_2(\Q_p)\).

\subsection{Finiteness properties, center, and torsion elements.} \label{subsection:finiteness}
Finally, A. Lubotzky has given examples of \(S\)-arithmetic groups over function fields showing that the finiteness property \(F_n\) is not visible at all in the profinite completion~\cite{Lubotzky:finiteness-properties}.  A similar construction for number fields in the same article gives that neither having \emph{trivial center} nor being \emph{torsion-free} is a profinite property.

\medskip
All these constructions illustrate that the class \(\mathcal{A}_S\) of

\smallskip
\noindent\emph{\(S\)-arithmetic subgroups \(\Gamma \le \mathbf{G}\) of simply-connected, absolutely almost simple, linear \(k\)-groups \(\mathbf{G}\) over number fields \(k\) with positive \(S\)-rank}
\smallskip

\noindent is utmost flexible in terms of profinite properties.  We may thus ask whether this class of groups exhibits any kind of profinite rigidity at all.  This question seems to be of an entirely different nature in the two cases when \(\mathbf{G}\) has \(S\)-rank one or has higher \(S\)-rank (note that non-abelian free groups occur among the arithmetic subgroups of the rank one \(\Q\)-group \(\mathbf{G} = \mathbf{SL_2}\)).  We shall focus on the higher rank case because a well-known conjecture of Serre says that precisely the \(S\)-arithmetic subgroups \(\Gamma \le \mathbf{G}\) of higher rank groups \(\mathbf{G}\) satisfy the congruence subgroup property in the slightly weaker sense that the \emph{\(S\)-congruence kernel} \(C(\mathbf{G}, S)\) of \(\mathbf{G}\) should be finite.  The latter is defined as follows.  The \(S\)-arithmetic subgroups and the principal \(S\)-congruence subgroups define each a unit neighborhood base of a topology on \(\mathbf{G}(k)\).  Forming the completions with respect to the corresponding uniform structures on \(\mathbf{G}(k)\), we thus obtain the short exact sequence
\[ 1 \rightarrow C(\mathbf{G},S) \longrightarrow \widehat{\mathbf{G}(k)} \longrightarrow \overline{\mathbf{G}(k)} \rightarrow 1 \]
defining \(C(\mathbf{G}, S)\).  The congruence subgroup property in the previous sense means that \(C(\mathbf{G}, S)\) is trivial.  A finite \(S\)-congruece kernel still ensures that we have a good description of the profinite completion.  It is however a notoriously hard problem to decide whether \(\Gamma\) has a Grothendieck subgroup.

\begin{question}[Platonov--Rapinchuk~\cite{Platonov-Rapinchuk:algebraic-groups}*{p.\,434}] \label{question:platonov-rapinchuk}
Let \(\Gamma \le \mathbf{G}\) be an \(S\)-arithmetic subgroup and suppose that \(C(\mathbf{G}, S)\) is finite.  Does there exist \(\Delta \le \Gamma\) such that \((\Gamma, \Delta)\) is a Grothendieck pair?
\end{question}

To the authors' knowledge, the answer to this question is not known for a single such group \(\Gamma\).

\medskip
The possible occurrence of Grothendieck pairs is however not the only impediment to profinite rigidity.  In fact, CSP implies that \(\widehat{\Gamma}\) shares an open subgroup with the locally compact group \(\mathbf{G}(\mathbb{A}^f_k)\) given by the \emph{finite adele points} of \(\mathbf{G}\).  Here \(\mathbb{A}^f_k = \prod'_{v \nmid \infty} k_v\) is defined as the restricted product consisting of those elements in the ordinary product for which almost all coordinates lie in the valuation ring \(\mathcal{O}_v\) of the local completion \(k_v\) of \(k\) at the corresponding non-archimedan place \(v\).  Hence, whenever a \(k\)-group \(\mathbf{G}\) with CSP and an \(l\)-group \(\mathbf{H}\) with CSP satisfy \(\mathbf{G}(\mathbb{A}^f_k) \cong \mathbf{H}(\mathbb{A}^f_l)\), then we can pick an open compact subgroup \(U \le \mathbf{G}(\mathbb{A}^f_k)\)  and the intersections \(\Gamma = U \cap \mathbf{G}(k)\) and \(\Lambda = U \cap \mathbf{H}(l)\) are arithmetic subgroups which have finite index subgroups with isomorphic profinite completions.  By Margulis superrgidity, these are not isomorphic unless \(\mathbf{G}\) is isomorphic to \(\mathbf{H}\) over a field isomorphism \(k \cong l\).  There are two non-trivial ways in which it may happen that \(\mathbf{G}(\mathbb{A}^f_k) \cong \mathbf{H}(\mathbb{A}^f_l)\) and these will be explained in the next two sections.

\subsection{Locally isomorphic number fields}

\begin{definition}
  Two number fields \(k\) and \(l\) are called \emph{arithmetically equivalent} if they have the same Dedekind zeta function \(\zeta_k = \zeta_l\).
\end{definition}

  Equivalently, almost all rational primes have the same \emph{decomposition type} in \(k\) and \(l\), meaning the same unordered tuple of inertia degrees.  It is then actually true that the decomposition types are the same at every (finite) rational prime, so that the local completions of \(k\) and \(l\) can only differ at ramified primes.  If this does not happen, then we have a bijection \(v \mapsto v'\) of the finite places of \(k\) and \(l\) such that \(k_v \cong l_{v'}\).  Equivalently:

\begin{definition}
  We say that \(k\) and \(l\) are \emph{locally isomorphic} if \(\mathbb{A}^f_k \cong \mathbb{A}^f_l\).
\end{definition}
  
Still equivalently, \(k\) and \(l\) are locally isomorphic if \(\mathbb{A}_k \cong \mathbb{A}_l\) holds for the full adele ring \(\mathbb{A}_k = \prod'_v k_v\) because arithmetically equivalent number fields are known to share the same signature~\cite{Klingen:similarities}*{Theorem~III.1.4.(h)}.  Non-isomorphic but arithmetically equivalent number fields arise group theoretically as follows. Let \(G\) be a finite group with non-conjugate subgroups \(H_1\) and \(H_2\) which are however \emph{almost conjugate}: they intersect each conjugacy class of \(G\) in the same number of elements.

\begin{example}
  Let \(G = \operatorname{GL}(3,2)\) be the automorphism group of the Fano plane \(\mathbb{P}^2(\mathbb{F}_2)\) and let \(H_1\) and \(H_2\) be the isotropy groups of some point and some line, respectively.  Then \(H_1\) and \(H_2\) can be chosen to be transposes of one another, hence they have the same permutation character which implies they are almost conjugate.  But the stabilizer group of a line in~\(\mathbb{F}_2^3\) cannot at the same time be the stabilizer group of a hyperplane in~\(\mathbb{F}_2^3\), so \(H_1\) and \(H_2\) are not conjugate.
\end{example}

Whenever the inverse Galois problem is solved for \(G\) so that \(G = \operatorname{Gal}(K/\Q)\) for a Galois extension \(K/\Q\), then the fixed fields of \(H_1\) and \(H_2\) define non-isomorphic arithmetically equivalent number fields \(k\) and \(l\).  Conversely, non-isomorphic arithmetically equivalent number fields \(k\) and \(l\) share the same Galois closure \(K\) and \(\operatorname{Gal}(K/k)\) and \(\operatorname{Gal}(K/l)\) are almost conjugate but not conjugate in \(\operatorname{Gal}(K/\Q)\).  Whether arithmetically equivalent number fields are locally isomorphic must then be checked at the finitely many ramified primes.

\begin{example}
  Continuing the above example, the polynomials
\[ X^7 -7X +3 \quad \text{and} \quad X^7+14X^4-42X^2-21X+9 \]
have the same splitting field with Galois group \(\operatorname{GL}(3,2)\) and they define non-isomorphic arithmetically equivalent number fields~\cite{Trinks:Arithmetisch} corresponding to subgroups \(H_1\) and \(H_2\) as above.  One can check that they also have the same local completions at the ramified primes \(3\) and \(7\), so the fields are locally isomorphic.
\end{example}

If \(\mathbf{G}\) is a \(\Q\)-group with CSP and \(k\) and \(l\) are non-isomorphic locally isomorphic number fields, then \(\mathbf{G}(\mathbb{A}^f_k) \cong \mathbf{G}(\mathbb{A}^f_l)\) and we obtain arithmetic subgroups which are non-isomorphic but profinitely isomorphic as above.  So if we are trying to find profinitely rigid arithmetic groups, it seems advisable to require that the number field \(k\) under consideration satsify the following definition.

\begin{definition}
  We say that a number field \(k\) is \emph{locally determined} if any other number field \(l\) such that \(\mathbb{A}^f_k \cong \mathbb{A}^f_l\) satisfies \(k \cong l\).
\end{definition}

Again, we could have replaced the finite by the full adele rings in the definition.  Being locally determined does not appear to be too restrictive.  For example, any number field of degree \(\le 11\) whose Galois closure has Galois group different from \(\operatorname{GL}(3,2)\), \(\Z / 8\Z \rtimes (\Z / 8\Z)^*\), \(\operatorname{GL}(2,3)\), and \(\operatorname{PSL}_2(11)\) is \emph{arithmetically solitary}, meaning it has no arithmetically equivalent sibling.  This applies in particular to all number fields of degree \(\le 6\).  Moreover, it was shown in~\cite{Chinburg-et-al:geodesics}*{Corollary~1.4} that any \(k\) with precisely one complex place is arithmetically solitary.  We showed that the same is true if \(k\) has precisely one real place and in fact also some other cycle types prevent the occurrence of Gassmann triples~\cite{Kammeyer-Kionke:gassmann}:

\begin{theorem} \label{thm:gassmann}
  If some \(g \in G\) permutes the set \(G/H\) with cycle type
  \begin{enumerate}
  \item \label{item:gassmann-real} \((1,2,\ldots,2)\) or
  \item \((a_1,\ldots,a_r,l)\) with \(l \ge 2\) coprime to \([G : H] \cdot a_1 \cdots a_r\) or
  \item \((a_1,\ldots,a_r,l)\) where \(l\) is prime and coprime to \(a_1 \cdots a_r\) and \(l \notin \{11, \frac{q^k-1}{q-1}\}\) for prime powers q and \(k > 2\),
  \end{enumerate}
  then every subgroup of \(G\) almost conjugate to \(H\) is conjugate to \(H\).
\end{theorem}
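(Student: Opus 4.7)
By the classical Gassmann criterion, subgroups $H, H' \leq G$ are almost conjugate if and only if their permutation characters $\pi_H$ and $\pi_{H'}$ coincide, equivalently, every element of $G$ has the same cycle type on $G/H$ as on $G/H'$. In particular $\ker\pi_H = \ker\pi_{H'}$, so after passing to the quotient by this common normal core I may assume $G$ acts faithfully on $G/H$; then $G$ embeds as a transitive subgroup of $\Sym(G/H)$ with $H$ a point stabiliser.

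The key construction is to extract from the hypothesised $g$ a power $h = g^N$ whose action on $G/H$ is concentrated on a single distinguished cycle of length $l$. In cases (2) and (3) I take $N = \operatorname{lcm}(a_1, \dots, a_r)$: the coprimality hypothesis forces $\gcd(N, l) = 1$, so $h$ has cycle shape $(1, \dots, 1, l)$ on $G/H$ and, by almost conjugacy, the same shape on $G/H'$. Case (1) is slightly different: $g^2$ already acts trivially on $G/H$ and hence lies in the common core, so after the reduction $g$ itself is an involution with a unique fixed point on both $G/H$ and $G/H'$.

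Let $X \subseteq G/H$ and $X' \subseteq G/H'$ be the distinguished long $h$-orbit on each side. The strategy is to use the setwise stabilisers $\operatorname{Stab}_G(X)$ and $\operatorname{Stab}_G(X')$ together with their actions on $X$ and $X'$ to identify $H$ and $H'$ as point stabilisers up to $G$-conjugacy. In case (3) the key input is Burnside's theorem that a transitive permutation group of prime degree $l$ is either contained in $\operatorname{AGL}_1(\F_l)$ or is $2$-transitive, combined with the CFSG-dependent classification of $2$-transitive groups of prime degree. The excluded primes $l = 11$ and $l = (q^k-1)/(q-1)$ with $k > 2$ are precisely the degrees at which $2$-transitive actions genuinely admit non-conjugate almost conjugate subgroups: $l = 11$ through the two non-conjugate copies of $A_5$ in $\operatorname{PSL}_2(11)$, and $l = (q^k-1)/(q-1)$ through the stabilisers of a point and a hyperplane in the action of $\operatorname{PGL}_k(\F_q)$ on $\mathbb{P}^{k-1}(\F_q)$. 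For all other primes the analysis forces the image of $\operatorname{Stab}_G(X)$ in $\Sym(X)$ to lie inside $\operatorname{AGL}_1(\F_l)$, from which a direct double-coset manipulation yields the $G$-conjugacy of $H$ and $H'$. Case (2) follows by the same mechanism with the stronger coprimality pinning the image down to the cyclic group $\langle h \rangle$.

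The main obstacle I foresee is the case (3) analysis: the exceptional list of primes in the statement exactly captures which $2$-transitive prime-degree actions produce counterexamples, and verifying that no further exceptions remain draws on the classification of finite simple groups. Case (1) should be considerably lighter and follow from a self-contained combinatorial analysis of the involution's transpositions in the faithful transitive action.
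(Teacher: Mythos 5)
Your initial reductions are sound: passing to the faithful action on \(G/H\), invoking the Gassmann permutation-character criterion, and powering \(g\) by \(\operatorname{lcm}(a_1,\ldots,a_r)\) to extract \(h\) with cycle type \((1,\ldots,1,l)\) on both \(G/H\) and \(G/H'\) are all correct. So is your identification of the excluded primes with the two non-conjugate copies of \(A_5\) in \(\operatorname{PSL}_2(11)\) and with the point/hyperplane stabilizers in projective linear groups acting on \(\mathbb{P}^{k-1}(\F_q)\); these do account for the list in item (3).

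The central mechanism, however, has a genuine gap. You assert that for primes \(l\) outside the exceptional list the image of \(\operatorname{Stab}_G(X)\) in \(\Sym(X)\) is forced into \(\operatorname{AGL}_1(\F_l)\), and that case (2) pins this image down to the cyclic group \(\langle h\rangle\). Both assertions are false: take \(G=S_6\) in its natural action on six points, \(H\) a point stabilizer, and \(g\) a \(5\)-cycle fixing one point. Then \(l=5\) is coprime to \([G:H]\cdot a_1=6\), so case (2) applies (and \(5\) is not excluded in case (3)); yet \(\operatorname{Stab}_G(X)\cong S_5\) surjects onto all of \(\Sym(X)=S_5\), which is \(2\)-transitive and has order \(120\), not \(5\) or \(20\). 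More fundamentally, \(\operatorname{Stab}_G(X)\le G\) and \(\operatorname{Stab}_G(X')\le G\) are setwise stabilizers drawn from two different \(G\)-sets, and you never explain how knowing their images in \(\Sym(X)\) and \(\Sym(X')\) produces an element of \(G\) conjugating \(H\) to \(H'\); the ``direct double-coset manipulation'' is not exhibited, and the case (1) ``combinatorial analysis of the involution's transpositions'' is likewise left entirely open. What should carry the argument is a constraint on \(G\) itself as a permutation group on \(G/H\) (after a reduction), in the spirit of the CFSG-based classifications of Jordan groups and of groups containing an \(l\)-cycle with fixed points, not an analysis of the setwise stabilizer of the long orbit. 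As written, the proposal correctly predicts which primes must be excluded and why, but does not contain a viable route from the cycle-type hypothesis to the conjugacy of \(H\) and \(H'\). (Note that the survey only states this theorem; the proof is in the cited paper, so I am evaluating the plan on its own terms.)
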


We remark that apart from profinite rigiditiy, these results also have applications to spectral rigidity of Riemannian manifolds~\cite{Kammeyer-Kionke:gassmann}*{Theorem~4}.  For more on similarities in the arithmetic of number fields, we recommend Klingen's monograph~\cite{Klingen:similarities}.

\subsection{Locally isomorphic groups}

Even if the number field \(k\) is locally determined, the examples of \(\operatorname{Spin}(q_{7,2})\) and \(\operatorname{Spin}(q_{3,6})\) over \(k=\Q\) show that there exist non-isomorphic \(k\)-groups \(\mathbf{G}\) and \(\mathbf{H}\) with \(\mathbf{G}(\mathbb{A}^f_k) \cong \mathbf{H}(\mathbb{A}^f_k)\).  Of course, this and the previous phenomenon can occur simultaneously so that the following definition is called for.

\begin{definition}
  We say that a \(k\)-group \(\mathbf{G}\) is \emph{locally isomorphic} to an \(l\)-group \(\mathbf{H}\) if there exists an isomorphism \(j \colon \mathbb{A}^f_k \xrightarrow{\cong} \mathbb{A}^f_l\) of \(\mathbb{A}_\Q\)-algebras such that \(\mathbf{G}\) is isomorphic to \(\mathbf{H}\) over \(j\).
\end{definition}

As a consequence of \emph{adelic superrigidity} \cite{Kammeyer-Kionke:adelic}*{Theorem~3.2}, we then have the following result.

\begin{theorem} \label{thm:local-isomorphism}
  Let \(\mathbf{G}\) and \(\mathbf{H}\) be simply-connected, absolutely almost simple, linear algebraic groups over number fields \(k\) and \(l\), respectively.  Suppose both have positive rank and finite congruence kernel with respect to the set of infinite places.  Then the following are equivalent:
  \begin{enumerate}
  \item \label{item:profinite-isomorphism} We have arithmetic groups \(\Gamma \le \mathbf{G}(k)\) and \(\Lambda \le \mathbf{H}(l)\) with \(\widehat{\Gamma} \cong \widehat{\Lambda}\).
  \item \label{item:local-isomorphism} The groups \(\mathbf{G}\) and \(\mathbf{H}\) are locally isomorphic.
  \end{enumerate}
\end{theorem}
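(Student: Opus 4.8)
The plan is to handle the two implications of the equivalence separately, the point being that implication \ref{item:local-isomorphism}\,$\Rightarrow$\,\ref{item:profinite-isomorphism} is essentially the construction explained before the theorem, while \ref{item:profinite-isomorphism}\,$\Rightarrow$\,\ref{item:local-isomorphism} should be reduced, after some bookkeeping with congruence completions and congruence kernels, to the adelic superrigidity statement \cite{Kammeyer-Kionke:adelic}*{Theorem~3.2}. Throughout, $S$ denotes the set of infinite places, so that $S$-arithmetic subgroups are the ordinary arithmetic subgroups, and I use that a finite congruence kernel is automatically central and that strong approximation (Kneser, Platonov, Prasad) applies because $\mathbf{G}$ is simply connected of positive rank, so that the closure of an arithmetic subgroup in $\mathbf{G}(\mathbb{A}^f_k)$ is open.

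For \ref{item:local-isomorphism}\,$\Rightarrow$\,\ref{item:profinite-isomorphism} a local isomorphism furnishes an isomorphism $j\colon \mathbb{A}^f_k \xrightarrow{\cong} \mathbb{A}^f_l$ of $\mathbb{A}_\Q$-algebras together with a compatible isomorphism $\mathbf{G}(\mathbb{A}^f_k)\cong\mathbf{H}(\mathbb{A}^f_l)$. Pick an open compact subgroup $U\le\mathbf{G}(\mathbb{A}^f_k)$, let $U'\le\mathbf{H}(\mathbb{A}^f_l)$ be its image, and set $\Gamma=U\cap\mathbf{G}(k)$, $\Lambda=U'\cap\mathbf{H}(l)$; these are arithmetic. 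By strong approximation the congruence completion $\overline{\Gamma}$ is identified with the open subgroup $U$, and $\overline{\Lambda}$ with $U'$, so that $\overline{\Gamma}\cong\overline{\Lambda}$ via $j$. As the congruence kernels are finite and central by hypothesis, $\widehat{\Gamma}$ and $\widehat{\Lambda}$ are finite central extensions of $\overline{\Gamma}\cong\overline{\Lambda}$; one then checks that the two extensions correspond under $j$ --- the congruence kernel is governed by the metaplectic kernel, which depends only on local data and on the roots of unity of the base field, and locally isomorphic number fields share both --- whence $\widehat{\Gamma}\cong\widehat{\Lambda}$.

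For the converse, start from arithmetic subgroups $\Gamma\le\mathbf{G}(k)$, $\Lambda\le\mathbf{H}(l)$ and an isomorphism $\theta\colon\widehat{\Gamma}\xrightarrow{\cong}\widehat{\Lambda}$. As above, $\widehat{\Gamma}$ is a finite central extension of an open subgroup of $\mathbf{G}(\mathbb{A}^f_k)$ with kernel the congruence kernel $C_\Gamma$, and similarly for $\widehat{\Lambda}$ over $\mathbf{H}(\mathbb{A}^f_l)$ with kernel $C_\Lambda$. Since $\theta$ need not respect these kernels, I would replace $C_\Gamma$ by $N=C_\Gamma\cdot\theta^{-1}(C_\Lambda)$, a finite central subgroup of $\widehat{\Gamma}$ with $\theta(N)\supseteq C_\Lambda$; then $\theta$ descends to an isomorphism $\widehat{\Gamma}/N\cong\widehat{\Lambda}/\theta(N)$ between finite central quotients of open subgroups of $\mathbf{G}(\mathbb{A}^f_k)$ and of $\mathbf{H}(\mathbb{A}^f_l)$, i.e. between open subgroups of the adelic points of suitable isogenous groups $\mathbf{G}'$ over $k$ and $\mathbf{H}'$ over $l$. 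Feeding this into adelic superrigidity \cite{Kammeyer-Kionke:adelic}*{Theorem~3.2} yields that $\mathbf{G}'$ is locally isomorphic to $\mathbf{H}'$, and since local isomorphism is insensitive to isogeny over the same adele ring (both $\mathbf{G},\mathbf{G}'$ and $\mathbf{H},\mathbf{H}'$ have the same simply-connected cover), $\mathbf{G}$ is locally isomorphic to $\mathbf{H}$, which is \ref{item:local-isomorphism}.

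I expect the main obstacle to be precisely this reduction step: one must know that $C(\mathbf{G},S)$ is finite \emph{and} central, so that it can be neutralized on both sides simultaneously even though it is a priori not a characteristic subgroup of $\widehat{\Gamma}$; one must know that strong approximation really makes the congruence completion open (this is where simple connectedness and positive rank are used); and one must argue that the residual finite central discrepancy introduced by passing to $\widehat{\Gamma}/N$ is harmless for the conclusion. Once the problem has been translated into a statement about isomorphisms of (open subgroups of) adelic groups, the geometric substance --- the actual rigidity of the local structure of $\mathbf{G}$ --- is carried entirely by the cited adelic superrigidity theorem, which in turn rests on Margulis-type superrigidity and the classification of absolutely almost simple groups over local fields.
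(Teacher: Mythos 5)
The spirit of your argument is right---strong approximation, finiteness (hence centrality) of the congruence kernel, and adelic superrigidity are exactly the ingredients---but both implications as you execute them contain genuine gaps that the paper avoids by a simpler device. For \eqref{item:local-isomorphism}\,\(\Rightarrow\)\,\eqref{item:profinite-isomorphism}, your step ``one then checks that the two extensions correspond under \(j\)'' is not justified: the congruence kernel \(C(\mathbf{G},S)\) is only \emph{bounded} by the metaplectic kernel, not determined by it, and even if \(C(\mathbf{G},S)\) and \(C(\mathbf{H},S)\) happened to be abstractly isomorphic finite groups you would still need to compare the \emph{extension classes} of \(\widehat{\Gamma}\) over \(\overline{\Gamma}\) and of \(\widehat{\Lambda}\) over \(\overline{\Lambda}\), which is far from automatic. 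The paper never needs any of this: since statement \eqref{item:profinite-isomorphism} only asks for \emph{some} arithmetic subgroups, and since \(C(\mathbf{G},S)\) is finite, one may pass to finite-index subgroups \(\Gamma' \le \Gamma\) and \(\Lambda' \le \Lambda\) deep enough that \(\widehat{\Gamma'}\) and \(\widehat{\Lambda'}\) meet the respective congruence kernels trivially, after which the profinite completion literally \emph{equals} the congruence completion and the matching of closures in \(\mathbf{G}(\mathbb{A}^f_k) \cong \mathbf{H}(\mathbb{A}^f_l)\) is immediate. This ``shrink to kill the kernel'' trick is the key observation you are missing.

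For \eqref{item:profinite-isomorphism}\,\(\Rightarrow\)\,\eqref{item:local-isomorphism}, the paper simply invokes~\cite{Kammeyer-Kionke:adelic}*{Theorem~3.4}, which is stated precisely to absorb the central discrepancy you are trying to handle by hand; your parenthetical about the local isomorphism extending the profinite isomorphism ``up to a central character'' echoes the paper's own remark. The gap in your reduction to Theorem~3.2 is the ``i.e.'' step: a quotient of an open subgroup of \(\mathbf{G}(\mathbb{A}^f_k)\) by an arbitrary finite central subgroup \(N\) is \emph{not} in general an open subgroup of \(\mathbf{G}'(\mathbb{A}^f_k)\) for an isogenous \(k\)-group \(\mathbf{G}'\). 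That would require \(N\) to arise from a \(k\)-rational finite central subgroup scheme of \(\mathbf{G}\), but \(\theta^{-1}(C_\Lambda)\) is just some finite central subgroup of the profinite group \(\widehat{\Gamma}\) and has no a priori rational structure---it could even be concentrated at a single place. So either cite Theorem~3.4 directly, as the paper does, or supply a substantially more careful argument that the neutralizing subgroup can be chosen rationally.
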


\begin{proof}
  \eqref{item:profinite-isomorphism} \(\Rightarrow\) \eqref{item:local-isomorphism}:  This is proven in \cite{Kammeyer-Kionke:adelic}*{Theorem~3.4} where the condition that \(\mathbf{G}\) and \(\mathbf{H}\) should be \emph{algebraically superrigid} follows from CSP. In fact, the constructed local isomorphism ``extends'' the isomorphism \(\widehat{\Gamma} \cong \widehat{\Lambda}\) up to a central character.
  
\eqref{item:local-isomorphism} \(\Rightarrow\) \eqref{item:profinite-isomorphism}:  If \(\mathbf{G}(\mathbb{A}^f_k) \cong \mathbf{H}(\mathbb{A}^f_l)\), pick a compact open subgroup \(U \le \mathbf{G}(\mathbb{A}^f_k)\).  The condition on the rank ensures that \(\prod_{v \mid \infty} \mathbf{G}(k_v)\) is a non-compact Lie group and similarly for \(\mathbf{H}\).  Thus, intersecting \(U\) with \(\mathbf{G}(k)\) and \(\mathbf{H}(l)\) gives infinite congruence subgroups \(\Gamma\) and \(\Lambda\) of \(\mathbf{G}\) and \(\mathbf{H}\), respectively.  Since the congruence kernels of \(\mathbf{G}\) and \(\mathbf{H}\) are finite, we can replace \(\Gamma\) and \(\Lambda\) with finite index subgroups whose profinite completions agree with the closures in \(\mathbf{G}(\mathbb{A}^f_k) \cong \mathbf{H}(\mathbb{A}^f_l)\) and are isomorphic.
\end{proof}
We remark that Theorem~\ref{thm:local-isomorphism} extends immediately to the \(S\)-arithmetic case by \emph{\(S\)-adelic superrigidity}~\cite{KKK:volume}*{Appendix~A}.

\subsection{Profinitely solitary groups}
Recall that by definition, all arithmetic subgroups of a given \(k\)-group \(\mathbf{G}\) are commensurable, so \(\mathbf{G}\) defines a unique commensurability class of arithmetic groups.  Hence the following variant of profinite rigidity is particularly suitable for the study of arithmetic groups.

\begin{definition}~ \label{def:solitude}
  \begin{enumerate}
  \item Two groups \(\Gamma\) and \(\Delta\) are called \emph{profinitely commensurable}, if there exist open subgroups \(U \le \widehat{\Gamma}\) and \(V \le \widehat{\Delta}\) such that \(U \cong V\).
  \item \label{item:solitude} Let \(\mathcal{C} \subseteq \mathcal{RF}\).  We say that \(\Gamma \in \mathcal{C}\) is \emph{profinitely solitary} in \(\mathcal{C}\) if each \(\Delta \in \mathcal{C}\) which is profinitely commensurable with \(\Gamma\) is commensurable with \(\Gamma\).
  \end{enumerate}
\end{definition}

Similarly as before, if \(\mathcal{C} = \mathcal{RF}\) in Definition~\ref{def:solitude}\,\eqref{item:solitude}, then \(\Gamma\) is called \emph{absolutely solitary}.  Clearly also equivalent to \eqref{item:profinite-isomorphism} and \eqref{item:local-isomorphism} in Theorem~\ref{thm:local-isomorphism} is the condition that any two arithmetic subgroups \(\Gamma \le \mathbf{G}(k)\) and \(\Lambda \le \mathbf{H}(l)\) are profinitely commensurable.  So let \(\mathcal{A}\) be the class of \emph{arithmetic groups with CSP} which is defined as the class \(\mathcal{A}_S\) from above where \(S\) is the set of infinite places and where we additionally require the congruence kernel to be finite.  Then Theorem~\ref{thm:local-isomorphism} implies that \(\Gamma \le \mathbf{G}(k)\) is profinitely solitary in \(\mathcal{A}\) if and only if \(k\) is locally determined and \(\mathbf{G}\) is determined up to \(k\)-isomorphism by the local isomorphism type.  In the special case that \(\mathbf{G}\) is \(k\)-split, we can rephrase the latter in Galois cohomological terms by saying that the diagonal map
\[ H^1(k, \operatorname{Aut} \mathbf{G}) \longrightarrow \prod_{v \nmid \infty} H^1(k_v, \operatorname{Aut} \mathbf{G}) \]
has trivial kernel.  In joint work of the first author with R.\,Spitler~\cite{Kammeyer-Spitler}*{Theorem~2}, we identified all cases in which this happens so that we have the following result.

\begin{theorem}
  Let \(\Gamma \le \mathbf{G}(k)\) be an arithmetic group from the class \(\mathcal{A}\) such that \(\mathbf{G}\) is \(k\)-split.  Then \(\Gamma\) is profinitely solitary in \(\mathcal{A}\) if and only if \(k\) is locally determined and one of the following is true:
\begin{enumerate}
\item \(k\) is totally imaginary,
\item \(\mathbf{G}\) has type \(A_{2n}\),
\item \label{item:one-real-place} \(k\) has exactly one real place and \(\mathbf{G}\) has type \(A_{2n+1}\) or \(C_n\).
\end{enumerate}
\end{theorem}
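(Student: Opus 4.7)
By Theorem~\ref{thm:local-isomorphism} and the discussion leading up to the statement, profinite solitude of $\Gamma$ in $\mathcal{A}$ is equivalent to $k$ being locally determined plus the vanishing of the kernel of
\[ H^1(k, \operatorname{Aut}\mathbf{G}) \longrightarrow \prod_{v \nmid \infty} H^1(k_v, \operatorname{Aut}\mathbf{G}). \]
Since $\mathbf{G}$ is $k$-split, $\operatorname{Aut}\mathbf{G}$ sits in the short exact sequence
\[ 1 \longrightarrow \mathbf{G}_{\mathrm{ad}} \longrightarrow \operatorname{Aut}\mathbf{G} \longrightarrow \operatorname{Out}\mathbf{G} \longrightarrow 1, \]
with $\operatorname{Out}\mathbf{G}$ a finite constant group (trivial for types $B_n, C_n, E_7, E_8, F_4, G_2$; cyclic of order two for $A_n$ with $n \ge 2$, $D_n$ with $n \neq 4$, and $E_6$; and $S_3$ for $D_4$). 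My plan is to analyse the local-global kernel through the associated six-term sequence, split into an outer contribution in $H^1(k, \operatorname{Out}\mathbf{G})$ and an inner contribution in $H^1(k, \mathbf{G}_{\mathrm{ad}})$.

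\textbf{Outer part.} A class in the local-global kernel projects to a class of $H^1(k, \operatorname{Out}\mathbf{G})$ that is trivial on every decomposition group at a finite place. For a constant finite group, such a class corresponds to a continuous homomorphism $G_k \to \operatorname{Out}\mathbf{G}$ killing every Frobenius, and Chebotarev density forces it to vanish. Hence every kernel class comes from $H^1(k, \mathbf{G}_{\mathrm{ad}})$, and the problem reduces to the inner part.

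\textbf{Inner part and case analysis.} Invoking the Kneser--Harder--Chernousov Hasse principle for the simply connected group $\mathbf{G}$, together with the connecting homomorphism of the isogeny $1 \to Z(\mathbf{G}) \to \mathbf{G} \to \mathbf{G}_{\mathrm{ad}} \to 1$, the local-global kernel in $H^1(k, \mathbf{G}_{\mathrm{ad}})$ is controlled by the kernel of $H^2(k, Z(\mathbf{G})) \to \prod_{v \nmid \infty} H^2(k_v, Z(\mathbf{G}))$. By Brauer--Hasse--Noether this kernel is generated by real-place contributions, and since $Z(\mathbf{G})$ is $\mu_{n+1}$, $\mu_2$, $\mu_4$ or $\mu_2 \times \mu_2$, $\mu_3$, $\mu_2$, and trivial in types $A_n$, $B_n$, $C_n$, $D_n$, $E_6$, $E_7$ and the remaining types respectively, one obtains three clean vanishing scenarios: (1) $k$ totally imaginary kills all real contributions; (2) $Z(\mathbf{G})$ of odd order, i.e.\ type $A_{2n}$, prevents any $2$-torsion obstruction; (3) a single real place allows the lone $\Z/2$ Brauer invariant to be forced to zero by the global product formula, but only when the type does not introduce an additional obstruction through the finer structure of $Z(\mathbf{G})$ or through the interaction with $\operatorname{Out}\mathbf{G}$. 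A careful enumeration isolates $A_{2n+1}$ and $C_n$ as precisely the types compatible with case~(3).

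\textbf{Main obstacle.} The hard direction is the necessity: for every pair $(k,\mathbf{G})$ not on the list one must actually exhibit a locally isomorphic but non-isomorphic $k$-form, i.e.\ a nontrivial cohomology class living in the local-global kernel. This means constructing, type by type, Brauer classes or twisted forms (quadratic forms for $B_n$ and $D_n$, algebras with involution for $A_{2n+1}$ with more than one real place, Jordan-algebraic or octonionic data for $E_6$ and $E_7$) whose real signatures and finite Hasse invariants are prescribed and compatible with the product formula, and ruling out the remaining corner cases where a second real place or an $\operatorname{Out}\mathbf{G}$-torsor might still be absorbable. Handling the type $D_n$ family with $n$ even, where $Z(\mathbf{G}) \cong \mu_2 \times \mu_2$ interacts non-trivially with the $S_3$- or $\Z/2$-outer structure, is the technical heart of the analysis in~\cite{Kammeyer-Spitler}*{Theorem~2}.
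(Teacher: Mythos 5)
The paper itself gives no proof of this theorem: after rephrasing profinite solitude (for $k$-split $\mathbf{G}$) as triviality of the kernel of $H^1(k, \operatorname{Aut}\mathbf{G}) \to \prod_{v \nmid \infty} H^1(k_v, \operatorname{Aut}\mathbf{G})$, it cites \cite{Kammeyer-Spitler}*{Theorem~2} for the classification. So there is no in-text argument to compare against, and your proposal has to stand on its own. Its high-level architecture (outer/inner decomposition via $1 \to \mathbf{G}_{\mathrm{ad}} \to \operatorname{Aut}\mathbf{G} \to \operatorname{Out}\mathbf{G} \to 1$, Chebotarev for the constant outer part, isogeny and Hasse principle for the inner part) is indeed the right shape of argument and plausibly mirrors the cited paper. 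But as written it has both factual slips and a genuine gap.

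First, the center enumeration is off by one: you list ``$\mu_{n+1}$, $\mu_2$, $\mu_4$ or $\mu_2\times\mu_2$, $\mu_3$, $\mu_2$, trivial'' against the seven categories $A_n, B_n, C_n, D_n, E_6, E_7$, remaining. This assigns $C_n$ the center $\mu_4$ or $\mu_2\times\mu_2$, which is wrong; the center of the simply connected type $C_n$ group is $\mu_2$, and $\mu_4/\mu_2\times\mu_2$ is the $D_n$ center. Since $C_n$ appears on the theorem's positive list in case~\eqref{item:one-real-place} and $D_n$ does not, confusing them is not harmless.

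Second, the passage from the inner local-global kernel to $\ker\bigl(H^2(k, Z)\to\prod_{v\nmid\infty} H^2(k_v,Z)\bigr)$ is stated too casually. The relevant fact is that the full adelic map $H^1(k,\mathbf{G}_{\mathrm{ad}}) \to \prod_v H^1(k_v,\mathbf{G}_{\mathrm{ad}})$ over \emph{all} places is injective (Sansuc, using the Kneser--Harder--Chernousov Hasse principle for $\mathbf{G}$ and Poitou--Tate for $Z$), so the restricted kernel you want consists exactly of classes trivial at every finite place but possibly nontrivial at real places. Analysing this requires computing $H^1(\R,\mathbf{G}_{\mathrm{ad}})$ type by type and the reciprocity constraints on which tuples of real classes are globally realizable; it is not simply a Brauer--Hasse--Noether statement about $H^2$.

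Most importantly, you acknowledge in the ``Main obstacle'' paragraph that the necessity direction --- exhibiting, for each $(k,\mathbf{G})$ off the list, a nontrivial class in the local-global kernel, hence a locally isomorphic but non-$k$-isomorphic form --- has not been carried out, nor has the exclusion of types $D_n$, $E_6$, $E_7$ in the one-real-place case actually been argued. That case analysis \emph{is} the content of \cite{Kammeyer-Spitler}*{Theorem~2}. What you have is a correct reduction and a plausible outline, but not a proof.
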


Note that the ``locally determined''-condition is redundant in case~\eqref{item:one-real-place} due to Theorem~\ref{thm:gassmann}\,\eqref{item:gassmann-real}.  Of course, we would like to promote this theorem to the absolute case.  A huge step in this direction is provided in Spitler's thesis~\cite{Spitler:profinite}*{Theorem~7.1}, where he shows the following.  Let \(\Delta\) be any finitely generated residually finite group such that \(\widehat{\Delta} \cong \widehat{\Gamma}\) for some higher rank arithmetic group \(\Gamma \le \mathbf{G}(k)\) from \(\mathcal{A}\).  Then \(\Delta\) embeds as a subgroup \(\Delta \le \Lambda\) of some possibly distinct arithmetic group \(\Lambda \le \mathbf{H}(l)\) such that \(\mathbf{G}\) is locally isomorphic to \(\mathbf{H}\).  If moreover \(\mathbf{H}\) has CSP, then \(\widehat{\Gamma} \cong \widehat{\Lambda}\) so that either \(\Delta = \Lambda\) or \((\Lambda, \Delta)\) is a Grothendieck pair.

This result allows us to decide for every arithmetic subgroup of a \(k\)-split algebraic group (\emph{Chevalley groups}) whether it is absolutely solitary, except for the uncertainties due to the incomplete status of Serre's conjecture on CSP and, more importantly, Question~\ref{question:platonov-rapinchuk}.  The precise statements are given in \cite{Kammeyer-Spitler}*{Theorems~1 and~3}.  Instead of reproducing them here, we just give a few examples illustrating that the naked eye is unable to judge whether a given Chevalley group is solitary:

\begin{example} \cite{Kammeyer-Spitler}*{Example in Section~1}
  These Chevalley groups either contain a Grothendieck subgroup or are profinitely solitary:
  \[ \operatorname{SL}_2(\Z[ \sqrt{2} ]), \operatorname{SL}_n(\Z) \,\text{for}\, n \ge 3, \operatorname{SL}_3( \Z[\sqrt{7} ]), \operatorname{Spin}(4,5)( \Z ), \operatorname{Sp}_n( \Z ), G_2(\Z). \]
  These Chevalley groups are not profinitely solitary:
  \[ \operatorname{SL}_3( \Z[\sqrt[8]{7} ]), \operatorname{Spin}(5,6)( \Z ), \operatorname{Sp}_n( \Z[\sqrt{2}] ), E_6(\Z), E_7(\Z), E_8(\Z), G_2( \Z[\sqrt{2}] ). \]
\end{example}
In ongoing joint work of the first author with A.\,Baumann, we want to extend such results from Chevalley groups to arithmetic subgroups of general simple algebraic \(k\)-groups.

About the relationship between profinite solitude and profinite rigidity, let us point the reader's attention to work of Weiss Behar~\cite{Weiss:non-rigidity}.  He gives three different methods to show that except for the exceptional types \(E_8\), \(F_4\), and \(G_2\), one can always find two finite index subgroups of a higher rank arithmetic group with CSP such that these subgroups are not isomorphic but have isomorphic profinite completions.  This applies in particular to the Chevalley groups \(\operatorname{SL}_n(\Z)\) for \(n \ge 3\) which one might suspect to be both absolutely profinitely rigid (Problem~\ref{problem:slnz-rigid}) and absolutely profinitely solitary. 

\subsection{Lattices in simple Lie groups}

As we mentioned, Bridson--McReynolds--Reid--Spitler have constructed absolutely profinitely rigid lattices in the simple Lie groups \(\operatorname{PSL}_2 (\C)\) and \(\operatorname{PSL}_2 (\R)\) in~\cite{BMRRS20,BMRRS21} and conjecturally, all such lattices are absolutely profinitely rigid.  So one might wonder whether there are more simple Lie groups \(G\) in which all lattices are absolutely profinitely rigid.  If \(\operatorname{rank} G \ge 2\), one can however employ CSP and the arithmetic machinery from above in a straightforward way to see that some lattices in \(G\) are not even profinitely rigid among themselves.

\begin{theorem} \cite{Kammeyer-Kionke:lattices}*{Theorem~1.3}
  Let \(G\) be a connected simple Lie group.  Suppose \(G\) has higher rank, trivial center, and is neither isomorphic to \(\operatorname{PSL}_m(\mathbb{H})\) nor to any real or complex form of type \(E_6\).  Then for each \(n \ge 2\), there exist \(n\) cocompact lattices in \(G\) which are pairwise non-isomorphic but all have isomorphic profinite completions.
\end{theorem}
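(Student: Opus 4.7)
The plan is to combine Theorem~\ref{thm:local-isomorphism} with Margulis' arithmeticity and superrigidity theorems, reducing the statement to a Galois-cohomological construction of locally isomorphic but pairwise non-$k$-isomorphic $k$-forms of an algebraic group. Since $G$ has higher rank and trivial centre, arithmeticity forces every lattice in $G$ to be the image of an $S$-arithmetic subgroup of a simply connected, absolutely almost simple $k$-group $\mathbf{G}$ with $S$ the set of infinite places of $k$, and superrigidity guarantees that lattices produced from non-$k$-isomorphic such $k$-groups are pairwise non-isomorphic as abstract groups.

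First I would realize $G$ as a factor of $\mathbf{G}(k\otimes_\Q\R)$ for a simply connected, absolutely almost simple $k$-group $\mathbf{G}$ over a suitably chosen totally real number field $k$, so that exactly one archimedean place $v_0$ contributes the non-compact factor $G$ and $\mathbf{G}$ is anisotropic at every other archimedean place. The resulting arithmetic subgroup of $\mathbf{G}(\mathcal{O}_k)$ projects to a cocompact lattice $\Gamma \le G$. The congruence subgroup conjecture is known in all the cases at hand, so $\mathbf{G}$ meets the hypotheses of Theorem~\ref{thm:local-isomorphism}. The core of the argument is then to construct, for every $n$, a family $\mathbf{G} = \mathbf{H}_0, \mathbf{H}_1, \ldots, \mathbf{H}_{n-1}$ of simply connected $k$-groups which are pairwise locally isomorphic, pairwise non-$k$-isomorphic, and share the archimedean local picture of $\mathbf{G}$. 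Classes of such $\mathbf{H}_i$ correspond to non-trivial elements of the Shafarevich--Tate set
\[
\ker\!\Bigl(H^1(k,\operatorname{Aut}\mathbf{G}) \longrightarrow \prod_{v} H^1(k_v,\operatorname{Aut}\mathbf{G})\Bigr)
\]
restricted to the fibre that fixes the local type at each archimedean $v$. Once produced, each $\mathbf{H}_i$ yields a cocompact arithmetic lattice $\Lambda_i \le G$; Theorem~\ref{thm:local-isomorphism} delivers $\widehat{\Lambda_i} \cong \widehat{\Lambda_j}$ after passing to appropriate finite index subgroups, and Margulis superrigidity ensures the $\Lambda_i$ are pairwise non-isomorphic.

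The main obstacle is exhibiting enough classes in the above Shafarevich--Tate set with the archimedean behaviour prescribed. For the classical types the classes are built from inner forms via central simple division algebras over $k$ (for type $A$) or from Hermitian and skew-Hermitian forms over such algebras (for types $B$, $C$, $D$); by selecting $k$ to carry suitably many finite places one can enlarge the relevant Brauer kernel arbitrarily while controlling every archimedean component. For the exceptional types $F_4$, $G_2$, $E_7$, $E_8$ the analogous abundance follows from Tits's classification together with the known structure of the cohomological invariants of these groups. Precisely the two excluded cases obstruct the construction: for $\operatorname{PSL}_m(\mathbb{H})$, the inner form of $A_{2m-1}$ at $v_0$ is rigid enough that the archimedean constraint collapses the local-global kernel, and for the real or complex forms of type $E_6$ the interaction between the outer automorphism group and the Tits algebras produces the analogous rigidity. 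The technical heart of the proof is this case-by-case type analysis, which in particular pinpoints why exactly $\operatorname{PSL}_m(\mathbb{H})$ and the $E_6$ forms have to be left out.
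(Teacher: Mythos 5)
Your proposal actually reconstructs the argument for a different and harder theorem than the one stated. The paper's construction for this statement is explicitly described right after it: the $n$ cocompact lattices are ``arithmetic congruence subgroups of different levels in a $k$-group which is isomorphic to $G$ at one infinite place and anisotropic at all other infinite places of $k$,'' so they are all \emph{commensurable} with each other. The only input from CSP is the concrete description of the profinite completions of these congruence subgroups, and the isomorphism of profinite completions is obtained by varying the level, not by varying the $k$-group. Your plan, by contrast, constructs genuinely different locally isomorphic $k$-forms $\mathbf{H}_0,\ldots,\mathbf{H}_{n-1}$ via a Shafarevich--Tate kernel in $H^1(k,\operatorname{Aut}\mathbf{G})$, yielding pairwise \emph{non-commensurable} lattices. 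That is the route for the companion result (\cite{Kammeyer-Kionke:lattices}*{Theorem~1.1}), which proves the stronger conclusion of pairwise non-commensurability but requires excluding more cases: complex groups of type $E_8$, $F_4$, $G_2$, as well as $\operatorname{SL}_{2m+1}(\R)$ and $\operatorname{PSL}_{2m+1}(\C)$. For those types the Shafarevich--Tate argument genuinely fails (there are not enough non-isomorphic $k$-forms with the required archimedean behaviour), so your approach cannot prove the present statement, which allows those groups.

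Relatedly, your explanation of the two exclusions is wrong for this theorem. You argue that $\operatorname{PSL}_m(\mathbb{H})$ and the $E_6$ forms are excluded because a local-global kernel collapses under the archimedean constraint, which is indeed the reason certain types are excluded in the non-commensurable version. But for the theorem at hand, the paper states that these are ``silly exceptions\dots only due to the incomplete status of Serre's conjecture on CSP'': one merely lacks the congruence subgroup property for the relevant anisotropic $k$-forms (whose archimedean factors include the compact forms of these types), so the profinite completion has no clean description. There is no cohomological obstruction here. Finally, note that since the lattices in the intended construction lie in a single commensurability class, Margulis superrigidity cannot distinguish them; the non-isomorphism of the congruence subgroups of different levels has to come from a finer invariant, which your proposal does not supply.
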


The silly exceptions of \(\operatorname{PSL}_m(\mathbb{H})\) and type \(E_6\) groups are only due to the incomplete status of Serre's conjecture on CSP.  So presumably, they can be dropped.  The lattices are constructed as arithmetic congruence subgroups of different levels in a \(k\)-group which is isomorpic to \(G\) at one infinite place and anisotropic at all other infinite places of \(k\).  Therefore these lattices, while pairwise non-isomorphic, are all commensurable.  That is why it seems to be more interesting in the higher rank case to ask whether lattices in \(G\) are profinitely solitary among themselves.  Regarding this question, we prove the following.

\begin{theorem} \cite{Kammeyer-Kionke:lattices}*{Theorem~1.1}
  Let \(G\) be a connected simple Lie group.  Suppose \(G\) has higher rank, trivial center, and is neither
  \begin{itemize}
  \item complex of type \(E_8\), \(F_4\), \(G_2\),
  \item nor real or complex of type \(E_6\),
  \item nor isomorphic to \(\operatorname{PSL}_m(\mathbb{H})\), \(\operatorname{SL}_{2m+1}(\R)\), \(\operatorname{PSL}_{2m+1}(\C)\).
  \end{itemize}
  Then for each \(n \ge 2\), there exist \(n\) cocompact lattices in \(G\) that are pairwise non-commensurable but all have isomorphic profinite completions.
\end{theorem}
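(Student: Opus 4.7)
The strategy is to combine Margulis arithmeticity with the local-isomorphism characterization of profinite commensurability (Theorem~\ref{thm:local-isomorphism}), reducing the construction to producing \(n\) pairwise non-isomorphic, locally isomorphic algebraic forms that admit cocompact realizations in \(G\).

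Since \(G\) has higher rank and trivial center, Margulis arithmeticity ensures that every lattice in \(G\) is arithmetic: it is commensurable with an arithmetic subgroup of \(\mathbf{G}(k)\) for some absolutely almost simple simply-connected \(k\)-group \(\mathbf{G}\) such that \(\prod_{v \mid \infty} \mathbf{G}(k_v) \cong G \times (\text{compact factors})\), and by Margulis commensurability two cocompact lattices of this shape are commensurable in \(G\) if and only if the associated pairs \((k, \mathbf{G})\) are \(k\)-isomorphic after identifying the \(k_{v_0}\)-points with \(G\) at the distinguished place \(v_0\). Hence exhibiting \(n\) pairwise non-commensurable cocompact lattices in \(G\) amounts to exhibiting \(n\) pairwise non-isomorphic such pairs, and for the profinite completions to agree the \(S\)-arithmetic extension of Theorem~\ref{thm:local-isomorphism} (with \(S\) the archimedean places) requires that the \(\mathbf{G}_i\) be pairwise locally isomorphic over (possibly locally isomorphic) base fields.

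I would fix a locally determined base field \(k\) of appropriate signature (cf.~Theorem~\ref{thm:gassmann}) together with a starting admissible \(k\)-group \(\mathbf{G}\), and then search for non-\(k\)-isomorphic, locally isomorphic forms of \(\mathbf{G}\) that preserve the archimedean types. Such forms are classified by the archimedean-compatible classes in the kernel
\[
\ker\Bigl(H^1(k, \operatorname{Aut} \mathbf{G}) \longrightarrow \prod_{v} H^1(k_v, \operatorname{Aut} \mathbf{G})\Bigr),
\]
which in the non-excluded cases I would populate with arbitrarily many classes by twisting against central simple algebras with prescribed local invariants or by outer cocycles through suitable cyclic extensions of \(k\). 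The resulting forms \(\mathbf{G}_1, \ldots, \mathbf{G}_n\) then yield pairwise non-commensurable cocompact arithmetic lattices \(\Gamma_i \le \mathbf{G}_i(k)\) in \(G\), and intersecting each \(\mathbf{G}_i(k)\) with a common compact open subgroup of the adele group upgrades the resulting profinite completions from merely commensurable to isomorphic via Theorem~\ref{thm:local-isomorphism}.

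The main obstacle will be the type-by-type Galois-cohomological analysis: one must exhibit an explicit twisting construction producing infinitely many classes in the above kernel while simultaneously maintaining compactness at every archimedean place other than \(v_0\). The excluded types in the statement correspond precisely to those for which this construction is blocked — the complex forms of \(E_8\), \(F_4\), \(G_2\) because the relevant \(H^1\) vanishes (the Hasse principle applies and there are no outer automorphisms to exploit); the real and complex \(E_6\) cases and the types \(\operatorname{PSL}_m(\mathbb{H})\), \(\operatorname{SL}_{2m+1}(\R)\), \(\operatorname{PSL}_{2m+1}(\C)\) because the Brauer-theoretic or outer-twist invariants needed are forced to be trivial under the compactness constraints at the non-distinguished infinite places.
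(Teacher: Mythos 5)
Your high-level strategy is the right one and essentially mirrors the paper's approach: use Margulis arithmeticity and the commensurability criterion to reduce non-commensurability of cocompact lattices to non-isomorphism of the underlying pairs \((k, \mathbf{G})\), and then invoke Theorem~\ref{thm:local-isomorphism} to translate profinite commensurability into local isomorphism of the forms.  Producing \(n\) pairwise non-isomorphic, locally isomorphic admissible forms via Galois cohomology twisting in the kernel of the localization map for \(H^1(k, \operatorname{Aut} \mathbf{G})\) — subject to the archimedean constraint that the form is isomorphic to \(G\) at the chosen place \(v_0\) and anisotropic at all other infinite places — is precisely the engine of the construction in \cite{Kammeyer-Kionke:lattices}, so the plan is sound.

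However, your diagnosis of the excluded types is partly wrong, and this is more than a cosmetic slip because it misidentifies where the argument genuinely breaks versus where it is merely conditional.  For the complex groups of type \(E_8\), \(F_4\), \(G_2\), your reasoning is correct: \(\operatorname{Aut} \mathbf{G} = \mathbf{G}\) is simply-connected, the Hasse principle (Kneser--Harder--Chernousov) localizes \(H^1(k, \mathbf{G})\) to the real places, and the prescribed archimedean behavior then pins down the form completely, so there is no room to twist; indeed the survey confirms these are genuine exceptions and such lattices are profinitely solitary.  But the exceptions of real or complex type \(E_6\) and of \(\operatorname{PSL}_m(\mathbb{H})\), \(\operatorname{SL}_{2m+1}(\R)\), \(\operatorname{PSL}_{2m+1}(\C)\) are \emph{not} cases where the twisting kernel is forced to vanish — as the text immediately following the theorem explains, these exclusions are purely an artifact of the currently incomplete status of Serre's congruence subgroup conjecture, and they disappear if CSP (strictly, finiteness of the congruence kernel) is granted.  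Attributing these to a Brauer-theoretic or outer-twist vanishing would predict that they, too, are true exceptions, which contradicts the paper.  You should instead flag that Theorem~\ref{thm:local-isomorphism} requires a finite congruence kernel on both sides; where this is only conjectural for the relevant anisotropic forms, the theorem simply cannot be applied, and that is why those groups are excluded.  A second, smaller caveat: the local–global analysis on \(H^1(k, \operatorname{Aut} \mathbf{G})\) is not as straightforward as twisting by central simple algebras; it runs through the exact sequence for \(\overline{\mathbf{G}} \to \operatorname{Aut} \mathbf{G} \to \operatorname{Out} \mathbf{G}\) and Poitou--Tate duality for \(H^2(k, Z(\tilde{\mathbf{G}}))\), and it is there that the type-by-type case distinctions actually live.
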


The exceptions in the last two bullet points can again be dropped if Serre's conjecture on CSP is true.  In contrast and similarly as in Weiss-Behar's theorem~\cite{Weiss:non-rigidity} mentioned above, the complex groups of type \(E_8\), \(F_4\), \(G_2\) are true exceptions.

\begin{theorem} \cite{Kammeyer-Kionke:lattices}*{Theorem~1.2}
  Let \(G\) be a connected complex simple Lie group of exceptional type \(E_8\), \(F_4\), or \(G_2\).  Then lattices in \(G\) are profinitely solitary among themselves.
\end{theorem}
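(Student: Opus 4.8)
The plan is to feed everything into Theorem~\ref{thm:local-isomorphism} and then exploit two rigidity phenomena special to the types \(E_8\), \(F_4\), \(G_2\): number fields with a single complex place are arithmetically solitary, and simply connected groups of these types are rigid at the finite places and satisfy the Hasse principle. Let \(\Gamma\) and \(\Delta\) be lattices in \(G\) which are profinitely commensurable. Since \(\operatorname{rank} G \ge 2\), Margulis arithmeticity supplies number fields \(k\), \(l\) together with simply connected, absolutely almost simple algebraic groups \(\mathbf{G}\) over \(k\) and \(\mathbf{H}\) over \(l\) so that \(\Gamma\) and \(\Delta\) are commensurable with arithmetic subgroups of \(\mathbf{G}(k)\) and \(\mathbf{H}(l)\), where \(\prod_{v \mid \infty}\mathbf{G}(k_v)\) is isomorphic to \(G\) times a compact group and likewise for \(\mathbf{H}\). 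The first step is to read off the archimedean data. At a real place \(v\) the group \(\mathbf{G}(k_v)\) is a real form of the type, which has only half the real dimension of the complex group \(G\); hence the place contributing the factor \(G\) must be complex. Conversely, at a complex place \(\mathbf{G}(k_v)\) is the noncompact complex group, so there can be no second complex place. Thus \(k\) has \emph{exactly one complex place} and \(\mathbf{G}\) is the compact real form at each remaining real place, and the same holds for \(l\) and \(\mathbf{H}\). Finally, the rank of \(\mathbf{G}\) over its complex place equals the absolute rank of the type, so the rank with respect to the infinite places is positive; together with finiteness of the corresponding congruence kernel (Serre's conjecture, known for these types) this makes Theorem~\ref{thm:local-isomorphism} and its \(S\)-arithmetic refinement applicable.

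By Theorem~\ref{thm:local-isomorphism}, profinite commensurability of \(\Gamma\) and \(\Delta\) forces \(\mathbf{G}\) and \(\mathbf{H}\) to be locally isomorphic; in particular \(\mathbb{A}^f_k \cong \mathbb{A}^f_l\) as \(\mathbb{A}_\Q\)-algebras, so \(k\) and \(l\) are arithmetically equivalent. Since \(k\) has precisely one complex place it is arithmetically solitary by \cite{Chinburg-et-al:geodesics}*{Corollary~1.4} (and if it happens to have exactly one real place one may instead invoke Theorem~\ref{thm:gassmann}\,\eqref{item:gassmann-real}), and therefore \(k \cong l\). Fixing such an isomorphism and transporting \(\mathbf{H}\) accordingly, we may assume \(l = k\); it then remains to prove that \(\mathbf{G}\) and \(\mathbf{H}\) are isomorphic over \(k\), for then \(\Gamma\) and \(\Delta\) are commensurable as lattices in \(G\).

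For the types \(E_8\), \(F_4\), \(G_2\) the group is simultaneously simply connected and adjoint and has no outer automorphisms, so \(\operatorname{Aut}\mathbf{G} = \mathbf{G}\) and \(k\)-forms of the split group \(\mathbf{G}_0\) are classified by \(H^1(k,\mathbf{G}_0)\). At every finite place \(v\) one has \(H^1(k_v,\mathbf{G}_0) = 1\) by Kneser's theorem, so \(\mathbf{G}\) and \(\mathbf{H}\) are both split, hence isomorphic, over each \(k_v\); over the archimedean places they agree as well, being the complex group at the unique complex place and the compact form at each real place. By the Hasse principle for simply connected groups over number fields (Kneser, Harder, and Chernousov for type \(E_8\)), the localization map \(H^1(k,\mathbf{G}_0) \to \prod_{v \mid \infty} H^1(k_v,\mathbf{G}_0)\) is injective, so \(\mathbf{G}\) and \(\mathbf{H}\) represent the same class and are \(k\)-isomorphic. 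This completes the argument.

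I expect the genuine work to lie in the archimedean bookkeeping: establishing rigorously that the defining field has exactly one complex place (which is what makes arithmetic solitude automatic), and verifying that the local isomorphism delivered by Theorem~\ref{thm:local-isomorphism}---a priori only an \(\mathbb{A}_\Q\)-algebra isomorphism permuting the finite places, not one induced by a field isomorphism---still yields arithmetic equivalence of \(k\) and \(l\) and affords no extra flexibility for the forms at the finite places of these particular types. Once this is secured, the passage from equality of cohomology classes to a global \(k\)-isomorphism, and hence to commensurability of the lattices, is routine.
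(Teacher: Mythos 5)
Your proposal is correct and reflects the same strategy as the cited paper: reduce via Theorem~\ref{thm:local-isomorphism} to showing (i) the defining number field, having exactly one complex place by Margulis arithmeticity and the archimedean bookkeeping, is arithmetically solitary by \cite{Chinburg-et-al:geodesics}*{Corollary~1.4}, and (ii) since types \(E_8\), \(F_4\), \(G_2\) are simultaneously simply connected and adjoint with no outer automorphisms, the vanishing of \(H^1(k_v,\cdot)\) at finite places together with the Hasse principle pins down the \(k\)-form by its prescribed archimedean behavior. The parenthetical appeal to the one-real-place criterion is superfluous here (the one-complex-place case already applies), and as you note implicitly, the argument rests on Serre's conjecture on finite congruence kernel being available for the relevant anisotropic forms of these exceptional types.
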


Again, we may ask if these lattices are in fact absolutely solitary.  The answer depends on whether the lattice is cocompact and on Question~\ref{question:platonov-rapinchuk}.

\begin{theorem} \cite{Kammeyer:absolutely}*{Theorem~1} \label{thm:exceptional}
  Let \(G\) be a connected complex simple Lie group of exceptional type \(E_8\), \(F_4\), or \(G_2\) and let \(\Gamma \le G\) be a lattice.
  \begin{itemize}
  \item If \(\Gamma\) is cocompact, then \(\Gamma\) is not absolutely solitary.
  \item If \(\Gamma\) is non-cocompact, then \(\Gamma\) either contains a Grothendieck subgroup or is absolutely solitary.
  \end{itemize}
\end{theorem}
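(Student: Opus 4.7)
The proof naturally splits along the dichotomy in the statement. The non-cocompact case follows from combining Spitler's embedding theorem with the preceding profinite solitude theorem for lattices in $G$, while the cocompact case requires constructing a Grothendieck pair via a fiber product argument.

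In the non-cocompact case, the implication ``contains a Grothendieck subgroup implies not absolutely solitary'' is immediate, so we establish the converse. Let $\Delta \in \mathcal{RF}$ be profinitely commensurable with $\Gamma$. By Lemma~\ref{lem:correspondence} we may pass to finite-index subgroups $\Gamma_0 \le \Gamma$ and $\Delta_0 \le \Delta$ with $\widehat{\Gamma_0} \cong \widehat{\Delta_0}$. Margulis arithmeticity together with the validity of Serre's conjecture for complex exceptional types places $\Gamma_0$ in the class $\mathcal{A}$, and Spitler's embedding theorem yields $\Delta_0 \hookrightarrow \Lambda \le \mathbf{H}(l)$ with $\mathbf{G}$ locally isomorphic to $\mathbf{H}$; since $\mathbf{H}$ also satisfies CSP, either $\Delta_0 = \Lambda$ or $(\Lambda, \Delta_0)$ is a Grothendieck pair. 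The key input is that for $E_8$, $F_4$, $G_2$ the local isomorphism class of $\mathbf{G}$ determines its $k$-isomorphism type, which is precisely the algebraic content of the preceding profinite solitude theorem for lattices in $G$; consequently $\Lambda$ is commensurable with $\Gamma$. In the first alternative $\Delta$ is commensurable with $\Gamma$, and in the second, intersecting the Grothendieck pair $(\Lambda, \Delta_0)$ with a common finite-index subgroup of $\Gamma$ and $\Lambda$ produces a Grothendieck subgroup sitting inside $\Gamma$.

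For the cocompact case we exhibit non-absolute-solitude by constructing a Grothendieck pair directly. Cocompactness in a higher-rank simple Lie group yields finite presentation of $\Gamma$ together with Kazhdan's property (T), so $H_1(\Gamma, \R) = 0$; passing to a sufficiently deep principal congruence subgroup $\Gamma_0 \le \Gamma$ we may further arrange $\Gamma_0$ to be perfect. The congruence subgroup property for cocompact lattices in complex $E_8$, $F_4$, $G_2$, together with the vanishing of the congruence kernel established by Borel--Prasad--Raghunathan, provides $H_2(\Gamma_0, \Z) = 0$ after passing to yet deeper congruence subgroups. The fiber product construction of Platonov--Tavgen in its Bridson--Grunewald refinement then produces a finitely presented Grothendieck subgroup $\Delta \subsetneq \Gamma_0$. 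This $\Delta$ has infinite index in $\Gamma_0$, hence is not commensurable with $\Gamma$, while being profinitely commensurable with $\Gamma$ since $\widehat{\Delta} \cong \widehat{\Gamma_0}$ is an open subgroup of $\widehat{\Gamma}$.

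The main obstacle lies in the cocompact case, specifically in verifying the acyclicity hypotheses feeding into the Bridson--Grunewald machine: the passage from CSP to the vanishing of $H_2(\Gamma_0, \Z)$ for a suitable congruence subgroup $\Gamma_0$ requires a careful analysis of the relationship between the Schur multiplier and the Steinberg presentation for the complex exceptional types. In the non-cocompact case the only delicate point is the translation from profinite commensurability (our hypothesis) to profinite isomorphism (Spitler's hypothesis), which is handled routinely via Lemma~\ref{lem:correspondence}.
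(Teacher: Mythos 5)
Your treatment of the non-cocompact case is sound in outline and tracks the intended argument: Spitler's embedding theorem places any profinitely isomorphic $\Delta$ inside some $\Lambda \le \mathbf{H}(l)$ with $\mathbf{H}$ locally isomorphic to $\mathbf{G}$, and non-cocompactness of $\Gamma$ in a complex group of type $E_8$, $F_4$, or $G_2$ forces $\mathbf{G}$ to be $k$-split over an imaginary quadratic field $k$ (since $p$-adic forms of these types are always split and anisotropy at a real place would make $\Gamma$ cocompact). As $\mathbb{A}^f_l \cong \mathbb{A}^f_k$ forces $[l:\Q]=2$, and quadratic fields are arithmetically solitary, one gets $l \cong k$, $\mathbf{H} \cong \mathbf{G}$, hence $\Lambda$ commensurable with $\Gamma$; then either $\Delta$ is commensurable with $\Gamma$ or one has a Grothendieck pair. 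You gesture at this via ``the algebraic content of the preceding solitude theorem,'' which is acceptable, though the role of the number field (small degree forcing arithmetic solitariness) deserves to be spelled out.

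The cocompact case, however, is wrong in approach. You attempt to \emph{construct} a Grothendieck pair for $\Gamma$ via the Platonov--Tavgen/Bridson--Grunewald fibre-product machine, and to extract the hypotheses from property~(T) and CSP. This cannot be what is happening: if it were, you would have resolved Question~\ref{question:platonov-rapinchuk} affirmatively for this class, whereas the survey explicitly states that the answer to that question is not known for a single such group. Moreover, the hypotheses you invoke are not those of the construction. Platonov--Tavgen requires a finitely presented \emph{quotient} $Q$ with trivial profinite completion and $H_2(Q,\Z)=0$, from which one forms the fibre product $P = F \times_Q F \le F \times F$; it is not a matter of $\Gamma_0$ itself being perfect with trivial Schur multiplier, and in any case a residually finite $\Gamma_0$ has $\widehat{\Gamma_0} \neq 1$, so it cannot play the role of $Q$. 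Finally, if the fibre-product argument worked it would apply in the non-cocompact case equally well, destroying the cocompact/non-cocompact dichotomy that is the whole point of the theorem.

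The actual mechanism in the cocompact case is the local-form flexibility at the archimedean places, exactly as in the earlier sections of the survey. Since $p$-adic forms of $E_8$, $F_4$, $G_2$ are always split, the defining $k$-group $\mathbf{G}$ of a \emph{cocompact} lattice in complex $G$ must be anisotropic over $k$ and hence compact at some real place $v$ of $k$. Replace the compact real form at $v$ by the split form (keeping the data at all other places); by the Hasse principle for simply-connected groups this determines a $k$-group $\mathbf{H}$ not $k$-isomorphic to $\mathbf{G}$, yet with $\mathbf{H}(\mathbb{A}^f_k) \cong \mathbf{G}(\mathbb{A}^f_k)$ since only archimedean places changed. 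Using CSP (known for these types) one obtains arithmetic subgroups of $\mathbf{H}$ profinitely commensurable with $\Gamma$; by Margulis superrigidity they are lattices in a different Lie group and hence not commensurable with $\Gamma$. So $\Gamma$ is not absolutely solitary, and no Grothendieck pair is needed. This is precisely the step that is \emph{unavailable} in the non-cocompact case, because there $k$ has no real places and the $k$-form is rigid — which is why the dichotomy arises.
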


The paper~\cite{Kammeyer:absolutely} contains a couple of more results in this direction.  For example non-cocompact lattices in \(\operatorname{Sp}_{2n}(\R)\) exhibit the same dichotomy as the non-cocompact lattices in Theorem~\ref{thm:exceptional}, see~\cite{Kammeyer:absolutely}*{Theorem~4}.

\subsection{Profinite properties among \(S\)-arithmetic groups}

In Sections~\ref{subsection:bounded}--\ref{subsection:finiteness}, we have listed various properties which can differ for \(S\)-arithmetic groups with isomorphic profinite completions.  It should thus be worthwhile to point out two invariants which \emph{are} profinite among \(S\)-arithmetic groups: the sign of the Euler characteristic and the volume.

\medskip
\noindent \emph{Sign of the Euler characteristic}.  Recall from Section~\ref{sec:higherl2} that there exist \(S\)-arithmetic groups \(\Gamma^n_\pm\) such that \(\widehat{\Gamma^n_+} \cong \widehat{\Gamma^n_-}\) and such that \(b^{(2)}_k(\Gamma^n_+) > 0\) if and only if \(k = n\) while \(b^{(2)}_k(\Gamma^n_-) > 0\) if and only if \(k = n+2\).  One can alter the construction of such groups in one way or another, but it always seems to occur that the degree of non-vanishing \(\ell^2\)-cohomology has the same parity for profinitely isomorphic \(S\)-arithmetic spinor groups.  This raises the question whether there exists a general obstruction which would explain the observation.  The \emph{Euler characteristic} comes to mind as it is not only the alternating sum of Betti numbers but also of \(\ell^2\)-Betti numbers, \(\chi(\Gamma) = \sum_{n \ge 0} (-1)^n b^{(2)}_n(\Gamma)\), and because \(S\)-arithmetic groups have non-vanishing \(\ell^2\)-homology in at most one degree.  Indeed, first in the arithmetic case~\cite{KKRS:profinite-invariants}, then in the \(S\)-arithmetic case~\cite{Kammeyer-Serafini:euler}, the following result was proven.

\begin{theorem} \label{thm:sign}
  Given a number field \(k\) and a finite set of places~\(S\) including all archimedean places, let \(\Gamma_i \le \mathbf{G_i}\) for \(i = 1,2\) be two \mbox{\(S\)-arithmetic} subgroups of simply-connected simple \(k\)-groups with CSP.  If \(\Gamma_1\) is profinitely commensurable with \(\Gamma_2\), then \(\operatorname{sgn} \chi(\Gamma_1) = \operatorname{sgn} \chi(\Gamma_2)\).
\end{theorem}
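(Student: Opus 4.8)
The plan is to express \(\chi\) through the single \(\ell^2\)-Betti number that can be non-zero, reduce everything to the parity of its degree, and then transport that parity through the profinite completion by \(S\)-adelic superrigidity together with global reciprocity.

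\emph{Reduction to isomorphic completions.} By Lemma~\ref{lem:correspondence} an open subgroup \(U \le \widehat{\Gamma_i}\) is the completion \(\widehat{\Gamma_i'}\) of a finite-index subgroup \(\Gamma_i' \le \Gamma_i\), which is again \(S\)-arithmetic in \(\mathbf{G_i}\) with CSP; since \(\chi\) is multiplicative in the index, \(\operatorname{sgn}\chi(\Gamma_i') = \operatorname{sgn}\chi(\Gamma_i)\). Thus profinite commensurability lets us assume \(\widehat{\Gamma_1} \cong \widehat{\Gamma_2}\), and passing through a restriction of scalars we may also assume \(\mathbf{G_1},\mathbf{G_2}\) absolutely almost simple (over possibly distinct fields containing \(k\)). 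The next step is that an \(S\)-arithmetic \(\Gamma \le \mathbf{G}(k)\) is a lattice in \(\prod_{v \in S} \mathbf{G}(k_v)\) acting properly on the product of the symmetric spaces \(X_v\) (\(v \mid \infty\)) with the Bruhat--Tits buildings \(\mathcal{B}_v\) (\(v \in S\), \(v \nmid \infty\)). Combining Borel's computation of \(L^2\)-cohomology of locally symmetric spaces, the concentration of the reduced \(\ell^2\)-cohomology of a \(p\)-adic group in the top degree of its building (the Steinberg contribution, of positive von Neumann dimension), and the Künneth formula for \(\ell^2\)-Betti numbers, one obtains \(b^{(2)}_n(\Gamma) = 0\) except in the degree
\[
  q(\mathbf{G}) \;=\; \sum_{v \mid \infty} \tfrac12 \dim X_v \;+\; \sum_{\substack{v \in S\\ v \nmid \infty}} \operatorname{rank}_{k_v} \mathbf{G},
\]
with \(b^{(2)}_{q(\mathbf{G})}(\Gamma) > 0\) precisely when \(\mathbf{G}(k_v)\) has discrete series (equal complex rank with its maximal compact) for every archimedean \(v\). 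Since \(\chi(\Gamma) = \sum_n (-1)^n b^{(2)}_n(\Gamma)\), this gives \(\operatorname{sgn}\chi(\Gamma) = (-1)^{q(\mathbf{G})}\) when the equal-rank condition holds at all infinite places, and \(\operatorname{sgn}\chi(\Gamma) = 0\) otherwise. It therefore suffices to show that (a) the equal-rank condition at all infinite places, and (b) the parity of \(q(\mathbf{G})\), are invariants of \(\widehat{\Gamma}\).

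\emph{Transport through the completion.} By \(S\)-adelic superrigidity (the \(S\)-arithmetic form of Theorem~\ref{thm:local-isomorphism}, valid because CSP yields algebraic superrigidity), \(\widehat{\Gamma_1} \cong \widehat{\Gamma_2}\) forces \(\mathbf{G_1}\) and \(\mathbf{G_2}\) to be locally isomorphic away from \(S\); in particular they have the same absolute type, the same outer twist, and the same \(k_v\)-rank for every \(v \notin S\). The remaining task is to recover (a) and (b) from this away-from-\(S\) information, even though the completions at the places of \(S\) --- which carry exactly the data entering \(q(\mathbf{G})\) --- are invisible in \(\widehat{\Gamma}\). This is where global input enters: writing \(q(\mathbf{G}) = \sum_{v \in S} \delta_v(\mathbf{G})\) with \(\delta_v\) the local ``middle degree'' (\(\tfrac12 \dim X_v\) at \(v \mid \infty\), \(\operatorname{rank}_{k_v}\mathbf{G}\) otherwise), the residue class of \(\sum_{v\in S}\delta_v(\mathbf{G})\) modulo \(2\) and the equal-rank condition are pinned down by the completions at \(v \notin S\) through a product formula. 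For spinor and unitary groups this is the product formula for Hilbert symbols in the guise of the Milgram--Gauss oddity formula: local isometry of the defining quadratic (or hermitian) forms at all finite places forces the archimedean signatures to agree modulo \(8\), hence \(\tfrac12 \dim X_\infty\) to agree modulo \(2\), while simultaneously controlling the \(k_v\)-ranks at the finite places of \(S\). For general \(\mathbf{G}\) one invokes Prasad's volume formula to write \(|\chi(\Gamma)|\) as a product of special values of Artin \(L\)-functions times local factors, and then the functional equations of these \(L\)-functions --- whose archimedean \(\Gamma\)-factors contribute precisely the sign \((-1)^{\delta_v}\) and whose poles encode the failure of equal rank --- together with the product formula for local root numbers, transfer the \(S\)-part of the sign onto data visible away from \(S\).

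Assembling the local contributions into a single parity statement, uniformly over all types --- classical and exceptional, inner and outer, and reconciling the archimedean middle dimensions with the possibly differing local ranks at the finitely many places of \(S\) --- is the part I expect to be the main obstacle. By contrast, once the \(\ell^2\)-cohomology concentration and \(S\)-adelic superrigidity are in hand, the remaining steps are essentially organized bookkeeping with known results.
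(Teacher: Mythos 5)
Your reduction and framework track the references closely: passing to finite-index subgroups via Lemma~\ref{lem:correspondence} (after which profinite commensurability can be upgraded to $\widehat{\Gamma_1}\cong\widehat{\Gamma_2}$), concentrating reduced $\ell^2$-cohomology in the single middle degree $q(\mathbf{G})=\sum_{v\mid\infty}\tfrac12\dim X_v+\sum_{v\in S,\,v\nmid\infty}\operatorname{rank}_{k_v}\mathbf{G}$ so that $\operatorname{sgn}\chi(\Gamma)=(-1)^{q(\mathbf{G})}$ when the archimedean equal-rank condition holds, invoking $S$-adelic superrigidity to identify $\mathbf{G_1}$ and $\mathbf{G_2}$ locally away from $S$, and then seeking a global reciprocity that forces the residual $S$-local data to agree mod~$2$. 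The Hilbert-symbol/oddity argument you sketch for spinor groups is essentially in the right spirit, although in the arithmetic case the parity of $\tfrac12\dim X_\infty$ is extracted uniformly from the \emph{Killing form} (available in every type, not only classical) via the Weil product formula for quadratic forms, rather than from a defining form.

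The genuine gap is precisely the step you flag as ``the main obstacle,'' and the tool you reach for there is not the right one. You propose Prasad's volume formula together with functional equations and local root numbers of the Artin $L$-functions appearing in it. But the $L$-values in Prasad's formula sit at even positive integers, outside the critical strip, and are positive reals; their functional equations and $\varepsilon$-factors do not obviously speak to the parity of $q(\mathbf{G})$, and no such argument appears in the cited work. What is actually used in the $S$-arithmetic case is a case-by-case analysis built on Poitou--Tate duality for the Galois cohomology of the center $Z(\mathbf{G})$: the archimedean ``half-dimension'' parities and the $k_v$-ranks at the finite places of $S$ are encoded as local cohomology classes, and the Poitou--Tate exact sequence supplies exactly the global sum-to-zero constraint that lets one recover the $S$-part from the away-from-$S$ part. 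So your proposal correctly identifies the endpoints but leaves the crucial bridge unbuilt; the Prasad/root-number sketch would need to be replaced by the Poitou--Tate argument (or something doing the same job type by type).
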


Here \(\operatorname{sgn} x\) takes the values \(-1, 0, 1\) if \(x < 0, x = 0, x > 0\), respectively.  Note that in contrast, the absolute value \(|\chi(\Gamma)|\) is not even a profinte invariant for arithmetic groups.  Indeed, if \(\Gamma_{m,n}\) denotes the level-four principal congruence subgroup of \(\operatorname{Spin}(q_{m,n})(\Z)\), then
\[ \chi(\Gamma_{8,2}) = 2^{89} \cdot 5^2 \cdot 17 \quad \text{while} \quad \chi(\Gamma_{4,6}) = 2^{90} \cdot 5^2 \cdot 17 \]
as was calculated in~\cite{KKRS:profinite-invariants}*{Theorem~1.2}.  Moreover, the sign of the Euler characteristic is not profinite within \(\mathcal{RF}\), in fact not even within residually finite groups of finiteness type \(F\)~\cite{KKRS:profinite-invariants}*{Corollary~1.3}.

We remark that the proof of Theorem~\ref{thm:sign} in the arithmetic case does not seem to generalize easily to the \(S\)-arithmetic case.  For arithmetic groups, the sign of the Euler characteristic (if nonzero) can be identified with the parity of half the dimension of the symmetric space.  The latter can be inferred form the Killing form by means of the Weil product formula for quadratic forms.  In the \(S\)-arithmetic case, one needs to add the ranks mod two of \(\mathbf{G}\) at the non-archimedean places in \(S\).  So the interaction of archimedean and non-archimedean places needs to be addressed.  This is done case by case by means of Poitou--Tate duality for the Galois cohomology of the center of \(\mathbf{G}\).

\medskip
\noindent \emph{Volume of \(S\)-arithmetic groups}.  While the above shows that the Euler characteristic of profinitely isomorphic lattices in different Lie groups can differ, the Euler characteristic is a profinite invariant for lattices with CSP in the same Lie group.  This follows from the following more general result because the Euler characteristic is proportional to the covolume.

\begin{theorem} \cite{KKK:volume}*{Theorem~1.2} \label{thm:volume}
  Let \(\Gamma, \Lambda \le G\) be irreducible lattices with CSP* in a connected, higher rank, semisimple Lie group without compact factors and with finite center and let \(\mu\) be a fixed Haar measure. Then \(\mu(G/\Gamma) = \mu(G/\Lambda)\).
\end{theorem}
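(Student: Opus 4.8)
The statement is to be read with the implicit hypothesis $\widehat{\Gamma}\cong\widehat{\Lambda}$; without such an assumption it is vacuous (two congruence subgroups of different level in $\mathrm{SL}_3(\Z)$ already have different covolume). My plan is to reduce the claim to an identity between the two sides of Prasad's volume formula for a pair of adelically isomorphic algebraic groups, and then to check that every ingredient of that formula is an invariant of the adelic isomorphism type.

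First I would invoke Margulis arithmeticity: since $G$ has higher rank and no compact factors, $\Gamma$ is commensurable with a congruence subgroup of $\mathbf{G}(k)$ for a simply-connected, absolutely almost simple $k$-group $\mathbf{G}$ with $\prod_{v\mid\infty}\mathbf{G}(k_v)$ isogenous to $G$ modulo compact factors, and similarly $\Lambda$ comes from some $\mathbf{H}/l$. Feeding the isomorphism $\widehat{\Gamma}\cong\widehat{\Lambda}$ into adelic superrigidity---the mechanism behind Theorem~\ref{thm:local-isomorphism}---shows that $\mathbf{G}$ and $\mathbf{H}$ are locally isomorphic, and since both lattices sit in the \emph{same} $G$ the finite-place local isomorphism upgrades to an isomorphism of $\mathbb{A}_{\Q}$-algebras $j\colon\mathbb{A}_k\xrightarrow{\sim}\mathbb{A}_l$ such that $\mathbf{G}$ is isomorphic to $\mathbf{H}$ over $j$; moreover the induced topological isomorphism $\mathbf{G}(\mathbb{A}^f_k)\cong\mathbf{H}(\mathbb{A}^f_l)$ agrees with the given $\widehat{\Gamma}\cong\widehat{\Lambda}$ up to a central character. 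In particular $[k:\Q]=[l:\Q]$, $|d_k|=|d_l|$, $\zeta_k=\zeta_l$, the absolute root data of $\mathbf{G}$ and $\mathbf{H}$ coincide, and the local groups $\mathbf{G}(k_v)$ and $\mathbf{H}(l_{v'})$ are isomorphic at every place.

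Next I would normalise the level. An abstract isomorphism of profinite groups carries open subgroups to open subgroups of equal index, so passing to matching finite-index subgroups $\Gamma_0\le\Gamma$ and $\Lambda_0\le\Lambda$ does not change the validity of $\mu(G/\Gamma)=\mu(G/\Lambda)$; using that open subgroups of a CSP* group are congruence subgroups up to the finite congruence kernel, I may take $\Gamma_0,\Lambda_0$ to be principal arithmetic subgroups attached to coherent families of parahoric subgroups $\{P_v\}$ and $\{Q_{v'}\}$ with $Q_{v'}$ corresponding to $P_v$ under $j$ for all $v$---this identification being precisely the statement that, up to the finite central discrepancy, the profinite isomorphism is induced by $j$. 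Then I would apply Prasad's volume formula, which expresses $\mu(G/\Gamma_0)$ as a product of: an archimedean factor depending only on $G$ and on $\mu$; powers of $|d_k|$ with exponents fixed by $[k:\Q]$, $\dim\mathbf{G}$ and the absolute type; special values $L(\rho_{\mathbf{G},i},m_i)$ of Artin $L$-functions built from the Galois action on the Dynkin diagram of $\mathbf{G}$ (equivalently, from the quasi-split inner form), at integers $m_i$ depending only on the absolute type; a finite product of explicit local factors $\lambda_v=\lambda(\mathbf{G},P_v,k_v)$, trivial outside the places where $\mathbf{G}$ ramifies or $P_v$ is not hyperspecial; and the Tamagawa number $\tau(\mathbf{G})=1$. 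By the previous paragraph the archimedean factor, the discriminant powers and the integers $m_i$ are the same for $\mathbf{G}$ and $\mathbf{H}$; the $\lambda_v$ match term by term via $j$ together with the matching of parahorics; and $\tau(\mathbf{G})=\tau(\mathbf{H})=1$. For the $L$-values I would observe that the local Euler factor of each $\rho_{\mathbf{G},i}$ at $v$ depends only on $k_v$ and on the local action on the diagram, hence only on the local isomorphism type of $(\mathbf{G},k_v)$, which equals that of $(\mathbf{H},l_{v'})$ at \emph{every} place; since an Artin $L$-function is determined by its local Euler factors, $L(\rho_{\mathbf{G},i},s)=L(\rho_{\mathbf{H},i},s)$ and the special values agree. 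Combining, $\mu(G/\Gamma_0)=\mu(G/\Lambda_0)$, hence $\mu(G/\Gamma)=\mu(G/\Lambda)$.

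The main obstacle is the global-to-local bookkeeping in the last two steps: one must know precisely enough that the abstract isomorphism $\widehat{\Gamma}\cong\widehat{\Lambda}$ is induced by a \emph{single} adelic isomorphism $j$---this is exactly the content of adelic superrigidity, and is where ``higher rank'' and the finiteness of the congruence kernel in ``CSP*'' are genuinely used---and one must verify that the residual finite central discrepancy between $\widehat{\Gamma}\cong\widehat{\Lambda}$ and $j$ does not alter covolumes, for instance by descending to a common congruence subgroup whose level is prime to the order of the center. Granting this, the $L$-function comparison is essentially formal, but it rests on the matching of local data at the \emph{ramified} places, which is exactly the extra strength of ``locally isomorphic'' over merely ``arithmetically equivalent''.
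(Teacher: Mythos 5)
The survey does not reproduce a proof of this theorem; it only cites \cite{KKK:volume}*{Theorem~1.2}, so there is no in-paper argument for me to compare against line by line. That said, your reconstruction is the natural one and, as far as the surrounding text allows one to infer, is very close to the actual strategy: the survey emphasises that CSP is essential, describes the mechanism of (\(S\)-)adelic superrigidity from~\cite{Kammeyer-Kionke:adelic} and \cite{KKK:volume}*{Appendix~A}, and Prasad's volume formula is the only realistic tool for comparing covolumes of principal arithmetic subgroups. You also correctly note that the hypothesis ``\(\widehat{\Gamma}\cong\widehat{\Lambda}\)'' (or profinite commensurability) has been dropped in the survey's statement and must be restored.

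A few points in your sketch genuinely need more care before it becomes a proof. First, passing from the abstract isomorphism \(\widehat{\Gamma}\cong\widehat{\Lambda}\) to matching \emph{principal} arithmetic subgroups with \(j\)-corresponding coherent families of parahorics is not automatic: what adelic superrigidity gives is a topological isomorphism \(\mathbf{G}(\mathbb{A}^f_k)\cong\mathbf{H}(\mathbb{A}^f_l)\) agreeing with the given one up to a finite central character, and one still has to argue that after shrinking one can choose parahoric families on both sides whose closures correspond exactly, keeping track of the index over the congruence kernel. Second, for the \(L\)-value comparison the argument ``Artin \(L\)-functions are determined by their Euler factors'' is fine in principle, but the representations \(\rho_{\mathbf{G},i}\) and \(\rho_{\mathbf{H},i}\) live on a priori different Galois groups \(\Gal(\overline k/k)\) and \(\Gal(\overline l/l)\); one must check that the isomorphism \(\mathbb{A}^f_k\cong\mathbb{A}^f_l\) really does force equality of Euler factors at \emph{every} finite place, including the ramified ones — this is exactly the gain of ``locally isomorphic'' over ``arithmetically equivalent,'' as you say, but it should be verified rather than asserted. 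Third, for the archimedean ingredients in Prasad's formula you implicitly use that the local isomorphism extends over the archimedean places; this follows because \(\mathbb{A}_k\cong\mathbb{A}_l\) (not just finite adeles) and because both \(\Gamma\) and \(\Lambda\) live in the same \(G\), but the compact archimedean factors of \(\mathbf{G}\) and \(\mathbf{H}\) contribute to the formula and also need to be matched. None of these appear to be show-stoppers — they are precisely the technical conditions that the acronym ``CSP*'' in \cite{KKK:volume} is designed to package — but they are the substance of the argument, not cleanup.
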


Here, the acronym CSP* refers to a finite congruence kernel and two additional technical requirements that are conjecturally always granted~\cite{KKK:volume}*{Definition~2.3}.  Many (it should be fair to say ``most'') higher rank lattices are known to have CSP*.  In particular, Theorem~\ref{thm:volume} holds unconditionally if \(\Gamma\) and \(\Lambda\) are non-cocompact~\cite{KKK:volume}*{Theorem~1.3}.  While the congruence subgroup property is essential in our arguments, we do also get a result on a rank one group \(G\) if we restrict attention to arithmetic congruence lattices.  To present the theorem, recall that an \emph{octonionic hyperbolic manifold} \(M\) is a 16-dimensional connected Riemannian manifold whose universal covering is isometric to \(\mathbb{OH}^2\), the \emph{octonionic hyperbolic plane}.  By the deck transformation action, the fundamental group \(\pi_1 M\) embeds (uniquely up to conjugation) into the exceptional Lie group \(\operatorname{Isom}^+(\mathbb{OH}^2) = F_{4(-20)}\).

\begin{theorem}
  Let \(\Gamma = \pi_1 M\) and \(\Lambda = \pi_1 N\) where \(M\) and \(N\) are octonionic hyperbolic manifolds with finite volume.  Suppose that both \(\Gamma\) and \(\Lambda\) define arithmetic congruence lattices in \(F_{4(-20)}\) and that \(\widehat{\Gamma} \cong \widehat{\Lambda}\).  Then \(M\) and \(N\) have equal volume.
\end{theorem}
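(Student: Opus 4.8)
\emph{Proof idea.} The plan is to carry over the adelic argument behind Theorem~\ref{thm:volume} to the rank one group \(F_{4(-20)}\), exploiting that the two extra hypotheses — that the lattices are \emph{congruence} lattices and that they sit in \(F_{4(-20)}\), where superrigidity is available — compensate for the fact that the congruence subgroup property is not at our disposal.

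First I would reduce to arithmetic data. By Corlette's archimedean and Gromov--Schoen's non-archimedean superrigidity, every finite-covolume lattice in \(F_{4(-20)}\) is arithmetic, and by hypothesis \(\Gamma\) is in fact a congruence subgroup of \(\mathbf{G}(\mathcal{O}_k)\) for a number field \(k\) and a simply connected \(k\)-group \(\mathbf{G}\) of type \(F_4\). Since groups of type \(F_4\) have trivial center and no outer automorphisms and \(\Gamma\) is a lattice in the single factor \(F_{4(-20)}\), the group \(\mathbf{G}\) is anisotropic at every archimedean place of \(k\) but one — where it is the form \(F_{4(-20)}\) — so \(k\) is totally real. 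Writing \(\Gamma = U_\Gamma \cap \mathbf{G}(k)\) for a compact open subgroup \(U_\Gamma \le \mathbf{G}(\mathbb{A}^f_k)\), strong approximation — applicable since \(\mathbf{G}\) is simply connected with noncompact archimedean factor \(F_{4(-20)}\) — identifies the \emph{congruence completion} \(\overline{\Gamma}\), the closure of \(\Gamma\) in \(\mathbf{G}(\mathbb{A}^f_k)\), with \(U_\Gamma\). The same applies to \(\Lambda\) with a field \(l\) and an \(l\)-group \(\mathbf{H}\) of type \(F_4\).

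The central step is to pass from \(\widehat{\Gamma} \cong \widehat{\Lambda}\) to a local isomorphism of \(\mathbf{G}\) and \(\mathbf{H}\). The congruence completion \(\overline{\Gamma}\) is a quotient of \(\widehat{\Gamma}\) whose kernel, the congruence kernel, is not known to be finite for rank one groups; the point of restricting to \emph{congruence} lattices is precisely to bypass it, by showing that the abstract isomorphism \(\widehat{\Gamma} \cong \widehat{\Lambda}\) nonetheless descends to an isomorphism \(\overline{\Gamma} \cong \overline{\Lambda}\) of congruence completions (this is the work done in \cite{KKK:volume}). Granting this, algebraic superrigidity of \(\mathbf{G}\) and \(\mathbf{H}\) — which here comes from superrigidity of lattices in \(F_{4(-20)}\) rather than from CSP — feeds into \(S\)-adelic superrigidity exactly as in the proof of Theorem~\ref{thm:local-isomorphism}, yielding an \(\mathbb{A}_\Q\)-algebra isomorphism \(j \colon \mathbb{A}^f_k \xrightarrow{\cong} \mathbb{A}^f_l\) with \(\mathbf{G} \cong \mathbf{H}\) over \(j\) and \(j(\overline{\Gamma}) = \overline{\Lambda}\); there is no central-character ambiguity because groups of type \(F_4\) have trivial center. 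Hence \(k\) and \(l\) are locally isomorphic, so arithmetically equivalent: \(|d_k| = |d_l|\), \(\zeta_k = \zeta_l\), and \(k, l\) have the same degree and are both totally real.

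Finally I would compare volumes via Tamagawa measure, which is Prasad's volume formula in disguise. Fixing the Haar measure on \(F_{4(-20)}\) corresponding to the standard metric on \(\mathbb{OH}^2\), and using that the Tamagawa number of the simply connected group \(\mathbf{G}\) equals \(1\) together with strong approximation, the covolume of \(\Gamma\) in \(\prod_{v \mid \infty} \mathbf{G}(k_v)\), which is \(F_{4(-20)}\) times a compact group, equals \(1/\mu_{\mathrm{Tam}}(\overline{\Gamma})\); dividing out the fixed volume of the compact factors and comparing Tamagawa with Riemannian normalisations introduces a factor \(c(k)\) depending only on \([k:\Q]\), on \(|d_k|\), and on the fixed real forms. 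Thus \(\mathrm{vol}(M) = c(k)\,\mu_{\mathrm{Tam}}(\overline{\Gamma})^{-1}\) and \(\mathrm{vol}(N) = c(l)\,\mu_{\mathrm{Tam}}(\overline{\Lambda})^{-1}\). By the previous step \(c(k) = c(l)\), and the local isomorphism \(j\) carries \(\overline{\Gamma}\) to \(\overline{\Lambda}\) compatibly with Tamagawa measures, so \(\mu_{\mathrm{Tam}}(\overline{\Gamma}) = \mu_{\mathrm{Tam}}(\overline{\Lambda})\); therefore \(\mathrm{vol}(M) = \mathrm{vol}(N)\). The hard part is the descent \(\widehat{\Gamma} \cong \widehat{\Lambda} \Rightarrow \overline{\Gamma} \cong \overline{\Lambda}\) in the absence of a finite congruence kernel: this is where the congruence hypothesis is indispensable and is the genuine obstacle to removing it.
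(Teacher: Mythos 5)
The paper itself does not prove this theorem; it is attributed to~\cite{KKK:volume} and only stated in the survey, so the comparison must be against the expected proof strategy. Your sketch captures the correct overall architecture: arithmeticity of \(F_{4(-20)}\)-lattices via Corlette and Gromov--Schoen, the description of congruence lattices as \(U\cap\mathbf{G}(k)\) with \(\overline{\Gamma}=U\) by strong approximation, the special structure of type \(F_4\) (trivial center, no outer forms, \(k\) totally real), \(S\)-adelic superrigidity to produce the local isomorphism, and Prasad's volume formula to conclude. You also correctly flag that the novelty lies entirely in supplying the arithmetic input that CSP provides for free in the higher-rank case, and you honestly leave this as a black box.

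Two points of criticism. First, the order of operations in your central step is likely backwards and obscures where the congruence hypothesis actually bites. You propose first to establish \(\overline{\Gamma}\cong\overline{\Lambda}\) by ``descending'' the isomorphism \(\Psi\colon\widehat{\Gamma}\to\widehat{\Lambda}\), and then to feed this into adelic superrigidity. But there is no a priori reason that \(\Psi\) should carry the congruence kernel \(C(\Gamma)\) onto \(C(\Lambda)\): for rank-one lattices the congruence kernel is typically a huge free-profinite-like object, and it is not characterized intrinsically inside \(\widehat{\Gamma}\). The more plausible mechanism, and the one compatible with how adelic superrigidity is applied in the higher-rank case, is that the congruence hypothesis on \(\Gamma\) hands you the canonical surjection \(\widehat{\Gamma}\twoheadrightarrow\overline{\Gamma}\subseteq\mathbf{G}(\mathbb{A}^f_k)\), so that composing with \(\Psi^{-1}\) gives a homomorphism \(\Lambda\to\widehat{\Lambda}\cong\widehat{\Gamma}\to\mathbf{G}(\mathbb{A}^f_k)\) with dense image in \(\overline{\Gamma}\). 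One then applies superrigidity \emph{to this map} (using that \(\Lambda\) is a superrigid lattice in \(F_{4(-20)}\), and symmetrically for \(\Gamma\)); the congruence kernel is thereby bypassed rather than ``descended through,'' and any identification of congruence completions is a consequence of the superrigidity argument, not a precondition for it. Second, since this step is exactly the content of the theorem, calling it ``the work done in \cite{KKK:volume}'' means the proposal, while a faithful summary of the ingredients, does not yet constitute a proof.
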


The theorem might be taken as evidence for an affirmative answer to the following long standing open problem which was most prominently asked in~\cite{Reid:ICM}*{Question~7.4}.

\begin{question} \label{question:volume}
  Let \(\Gamma = \pi_1 M\) and \(\Lambda = \pi_1 N\) where \(M\) and \(N\) are hyperbolic 3-manifolds with finite volume.  Suppose that \(\widehat{\Gamma} \cong \widehat{\Lambda}\).  Do \(M\) and \(N\) have equal volume?
\end{question}

Recall that only finitely many different hyperbolic 3-manifolds can have the same volume.  So if the answer to Question~\ref{question:volume} is yes, then only finitely many hyperbolic 3-manifolds can have profinitely isomorphic fundamental groups.  This corollary was recently obtained by Yi Liu~\cite{Liu:almost} using a blend of 3-manifold methods.  In Section~\ref{section:profinite-properties}, we will finally list some properties of groups that are profinite in the absolute sense.

\section{Profinitely isomorphic branch groups and their relatives}
We have seen in the previous section that the congruence subgroup property gives rise to a concise description of the profinite completion of arithmetic groups and that this description can be used to find non-isomorphic groups with isomorphic profinite completions. A similar strategy works for \emph{branch groups} and some groups inspired by branch group methods. We will begin by describing the class of branch groups.

\subsection{Branch groups}
The notion of branch groups grew out of the famous groups constructed by Grigorchuk in 1980 \cite{Grigorchuk80}. We refer to \cite{Bartholdi-Grigorchuk-Sunic} for an introduction to branch groups with many examples. 
 
 \medskip
 
Let $X = (X_\ell)_{\ell \in \N}$ be a sequence of finite sets with $|X_\ell| \geq 2$. For each $\ell \geq 0$, we define the  $X^{\ell} = X_1 \times \ldots \times X_{\ell}$. We think of the elements of $X^\ell$ as \emph{words of length $\ell$} where the $i$th letter is taken from $X_i$. We also put $X^{0} = \{\emptyset\}$ to consist of the empty word $\emptyset$ of length $0$.
With $X$  we associate a rooted tree $\T = \T_X$. The tree $\T$ has vertex set $X^{\ast} = \bigcup \limits_{\ell=0}^{\infty} X^{\ell}$ and the root vertex is $\emptyset$. An edge connects two vertices  $v \in X^{\ell-1},w \in X^\ell$ if and only if there is $x \in X_\ell$ with $w =vx$. The tree $\T$ is spherically homogeneous: the degree of a vertex depends only on the distance from the root.

\medskip

Let $\Aut(\T)$ be the group of all root-preserving automorphisms of~$\T$.
The distance of a vertex $v$ to the root (i.e. the length of the word $v$ -- called the \emph{level} of $v$), is preserved under the action of $\Aut(\T)$.
In other words, the level sets $X^{\ell}$ are stable under the $\Aut(\T)$-action. Considering only the action on $X^\ell$ provides a homomorphism $\pi_{\ell} \colon \Aut(\T) \rightarrow \Sym(X^{\ell})$. The coarsest topology such that these homomorphisms are continuous gives $\Aut(\T)$ the structure of a profinite group.

\medskip

A subgroup $\Gamma \subseteq \Aut(\T)$ acts \emph{spherically transitively} on $\T$ if the action of $\Gamma$ on $X^{\ell}$ is transitive for every $\ell \geq 1$.
The $\ell$th \emph{level stabilizer} in $\Gamma$ is the kernel of $\pi_\ell$. More precisely, if $\St_\Gamma(v)$ denotes the stabilizer of $v$ in $\Gamma$, then $
\St_\Gamma(\ell) = \bigcap \limits_{v \in X^{\ell}} \St_\Gamma(v)$.

The subgroup of $\St_\Gamma(v)$ consisting of all elements that fix every word that does \emph{not} contain $v$ as an initial subword, is called the \emph{rigid stabilizer} of $v$ in $\Gamma$. We denote it by $\RiSt_\Gamma(v)$. The \emph{$\ell$-th rigid level stabilizer} $\RiSt_\Gamma(\ell)$ is 
the subgroup generated by the groups $\RiSt_\Gamma(v)$ with $v \in X^{\ell}$. We note that $\RiSt_\Gamma(\ell)$ is a normal subgroup of $\Gamma$.
\begin{definition}
Let $\Gamma$ be a subgroup of $\Aut(\T)$.
We say that $\Gamma$ is a \emph{branch subgroup} of $\Aut(\T)$ if $\Gamma$ acts spherically transitively on $\T$ and
 $\RiSt_\Gamma(\ell)$ has finite index in $\Gamma$ for every $\ell \in \N$.
 \end{definition}
 
 For instance $\Aut(\T)$ is a branch subgroup, since in this case we have $\St_{\Aut(\T)}(\ell) = \RiSt_{\Aut(\T)}(\ell)$. Every element $g$ acting trivially on the $\ell$-th level, can be decomposed as product of automorphisms $g = \prod_{v \in X^{\ell}} g_v$ where $g_v$ acts only on words that start with $v$.
 
 \medskip
 
Among branch groups, in particular the Grigorchuk--Gupta--Sidki groups received a lot of attention; see \cite[\S 2.3]{Bartholdi-Grigorchuk-Sunic}.
 For our purposes the most important examples are branch groups inspired by a construction of P.~M.~Neumann \cite{Neumann86}.
 \begin{example}
 Assume that $X_i = \{1,2,3,4,5\}$ for all $i$, i.e., $\T$ is a rooted $5$-regular tree. We define two actions of the alternating group $\mathrm{Alt}(5)$ on $\T$. The first copy of $\mathrm{Alt}(5)$ simply acts be permuting the first letter of every word, i.e. for $\sigma \in \mathrm{Alt}(5)$ we define $\underline{\sigma}(xw) = \sigma(x)w$. The second action is along the ``spine'' $(5,5,5,5,5...)$ of the tree. For $\sigma$ in $\mathrm{Alt}(5)$ we define recursively
  \[
  	\tilde{\sigma}(xw) = \begin{cases} x\underline{\sigma}(w) & \text{ if } x=1\\
	x\tilde{\sigma}(w) &\text{ if } x=5\\
	xw & \text{ otherwise } \end{cases}.
  \]
  This provides us with two copies $\underline{\mathrm{Alt}(5)}, \widetilde{\mathrm{Alt}(5)}$ of the alternating group in $\Aut(\T)$. The group $\Gamma = \langle \underline{\mathrm{Alt}(5)}, \widetilde{\mathrm{Alt}(5)} \rangle$ is a perfect branch group where level and rigid stabilizers agree. In fact, $\Gamma$ satisfies $\Gamma \cong \Gamma \wr \mathrm{Alt}(5)$ and this makes it possible to describe the profinite completion of $\Gamma$ as an iterated wreath product, i.e.
  $\widehat{\Gamma} = \cdots \mathrm{Alt}(5) \wr \mathrm{Alt}(5) \wr \mathrm{Alt}(5)$.
 \end{example}
 The construction of Neumann allows for a lot of flexibility. One can vary the spine, use multiple spines and change the group used for the spinal-action; we refer to section 3.2 in \cite{KS-amenable} for details. Generalizing this further, it was proven in \cite{KS-embedding} that every residually finite group $\Gamma$ embeds into a perfect branch group $B_\Gamma$ preserving a number of properties. For instance, if $\Gamma$ is amenable, then $B_\Gamma$ is amenable, if $\Gamma$ is torsion, then $B_\Gamma$ is torsion.
 \begin{definition}
A branch subgroup $\Gamma$  of $\Aut(\T)$ has the \emph{congruence subgroup property} if the canonical homomorphism $\widehat{\Gamma} \to \Aut(\T)$ is injective.
 \end{definition}
 If $\Gamma$ has the congruence subgroup property, then the closure $\overline{\Gamma}$ in $\Aut(\T)$ agrees with the profinite completion of $\Gamma$. In general, the kernel of $\widehat{\Gamma} \to \Aut(\T)$ is called the congruence kernel and has been studied for instance in \cite{Bartholdi-etal12,Garrido16}. For perfect branch groups à la Neumann the congruence subgroup property can be easily checked using the following
 \begin{lemma}[Corollary 4.4 in \cite{KS-amenable}]
Let $\Gamma \leq \Aut(\T)$ be a spherically transitive subgroup.
Suppose that the rigid vertex stabilizers are perfect and that $\St_\Gamma(\ell) =\RiSt_\Gamma(\ell)$ for every $\ell \geq 1$.
Then $\Gamma$ is just-infinite and satisfies the congruence subgroup property.
\end{lemma}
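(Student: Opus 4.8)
The plan is to reduce everything to the classical fact that in a branch group every nontrivial normal subgroup contains the derived subgroup of a rigid level stabiliser, and to observe that under our hypotheses this derived subgroup \emph{is} a level stabiliser. First I would record two structural consequences of the standing assumptions. Since $\St_\Gamma(\ell)$ is the kernel of the action of $\Gamma$ on the finite set $X^\ell$, it has finite index in $\Gamma$; thus the hypothesis $\St_\Gamma(\ell)=\RiSt_\Gamma(\ell)$ already makes $\Gamma$ a branch subgroup, and moreover $\Gamma$ is infinite because $\Gamma/\St_\Gamma(\ell)=\pi_\ell(\Gamma)$ acts transitively on $X^\ell$, a set of size at least $2^\ell$. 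Second, for vertices $v$ of a fixed level $\ell$ the subgroups $\RiSt_\Gamma(v)$ have pairwise disjoint supports, hence commute, and an element of one of them that also lies in the product of the others must be trivial; so $\RiSt_\Gamma(\ell)$ is the internal direct product $\prod_{v\in X^\ell}\RiSt_\Gamma(v)$, a finite direct product of perfect groups, hence perfect. Therefore $\RiSt_\Gamma(\ell)'=\RiSt_\Gamma(\ell)=\St_\Gamma(\ell)$ for every $\ell\ge 1$.

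The core of the argument is the claim that every nontrivial $N\trianglelefteq\Gamma$ satisfies $\RiSt_\Gamma(\ell)'\le N$ for some $\ell\ge 1$, which I would prove directly rather than cite. Pick $1\neq g\in N$ and let $u$ be a vertex of minimal level $\ell$ that is moved by $g$; since the root is fixed by every element of $\Aut(\T)$ we have $\ell\ge 1$, and minimality forces $g^{-1}(u)\neq u$, so the subtree $g^{-1}(\T_u)$ is disjoint from $\T_u$. Now take $a,b\in\RiSt_\Gamma(u)$, both supported in $\T_u$. Then $a^g=g^{-1}ag$ is supported in $g^{-1}(\T_u)$, hence commutes with $b$; using $[xy,b]=[x,b]^y[y,b]$ with $x=a^{-1}$, $y=a^g$ and the fact that conjugation by $a^g$ fixes the $\T_u$-supported element $[a^{-1},b]$, one computes $[[a,g],b]=[a^{-1}a^g,b]=[a^{-1},b]$. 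Since $g\in N\trianglelefteq\Gamma$, both $[a,g]$ and then $[[a,g],b]$ lie in $N$, so $[a^{-1},b]\in N$; letting $a,b$ range over $\RiSt_\Gamma(u)$ gives $\RiSt_\Gamma(u)'\le N$. Finally, spherical transitivity lets us conjugate $u$ to any vertex $v\in X^\ell$, so $\RiSt_\Gamma(v)'\le N$ for all such $v$, and hence $\RiSt_\Gamma(\ell)'=\prod_{v\in X^\ell}\RiSt_\Gamma(v)'\le N$.

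Combining these: if $N\trianglelefteq\Gamma$ is nontrivial, then by the claim and the first paragraph $N\supseteq\RiSt_\Gamma(\ell)'=\St_\Gamma(\ell)$ for some $\ell$, so $N$ has finite index; as $\Gamma$ is infinite, this means $\Gamma$ is just-infinite. For the congruence subgroup property, note that any finite-index subgroup of $\Gamma$ contains its normal core, which (being nontrivial, as $\Gamma$ is infinite) contains some $\St_\Gamma(\ell)$; thus the level stabilisers form a neighbourhood base for the profinite topology on $\Gamma$. Equivalently, $\widehat{\Gamma}\cong\varprojlim_\ell\Gamma/\St_\Gamma(\ell)=\overline{\Gamma}\le\Aut(\T)$, so the canonical map $\widehat{\Gamma}\to\Aut(\T)$ is injective, which is exactly the congruence subgroup property.

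The only real content — and the step I expect to be the main obstacle — is the claim in the second paragraph: one must be careful that $g^{-1}(u)\neq u$ holds by minimality of the level (so that the relevant subtrees are genuinely disjoint), carry out the commutator bookkeeping, and, crucially, verify that $\RiSt_\Gamma(\ell)$ is the \emph{full} internal direct product of the vertex rigid stabilisers, since that is what promotes ``perfect rigid vertex stabilisers'' to ``perfect rigid level stabiliser.'' The remaining manipulations with the profinite topology are formal.
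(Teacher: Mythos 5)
Your proof is correct and follows the same overall strategy as the paper's: show that every nontrivial normal subgroup $N$ contains $\RiSt_\Gamma(\ell)'$ for some $\ell\ge 1$, use the internal direct product decomposition $\RiSt_\Gamma(\ell)=\prod_{v\in X^\ell}\RiSt_\Gamma(v)$ and perfectness of the rigid vertex stabilisers to get $\RiSt_\Gamma(\ell)'=\RiSt_\Gamma(\ell)=\St_\Gamma(\ell)\le N$, and then read off both just-infiniteness and the congruence subgroup property. The only substantive difference is that where the paper cites Segal and Grigorchuk for the commutator lemma (``there is $n$ with $\RiSt_\Gamma(n)'\subseteq N$''), you prove it from scratch; your commutator computation $[[a,g],b]=[a^{-1}a^g,b]=[a^{-1},b]$ is correct and is indeed the standard argument. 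One small remark: the minimality of the level of $u$ does no work in your argument — since automorphisms of $\T$ preserve levels, any vertex $u$ moved by $g$ is sent to a distinct vertex of the same level, which already makes $\T_u$ and $g^{-1}(\T_u)$ disjoint, and that is all you use.
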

\begin{proof}
Let $N$ be a non-trivial normal subgroup of $\Gamma$. We need to show that $N$ is a congruence subgroup, i.e. that it contains $\St_\Gamma(n)$ for some $n$.
It is well-known that there is some $n$ such that the derived subgroup of $\RiSt_{\Gamma}(n)$ is contained in $N$; see \cite[Lemma 4]{Segal01} and \cite[Theorem $4$]{Grigorchuk00}.
Since the rigid stabilizer is generated by the rigid vertex stabilizers, it is perfect and we deduce $\St_{\Gamma}(n) = \RiSt_{\Gamma}(n) \subseteq N$.
\end{proof}
Using this, one can describe for branch groups à la Neumann and their relatives the profinite completion as an iterated wreath product.  In the special construction used in \cite{KS-amenable} the profinite completion only depends on the action on the first level \cite[Corollary 4.6]{KS-amenable}. This allows for the construction of uncountable families of profinitely isomorphic but non-isomorphic branch groups. The spinal action allows one to embed any residually finite group as a subgroup. On the other hand, amenability of branch groups can be studied using the action on the boundary of the tree based on the methods of \cite{JNdlS} (one can also use the more specific result \cite{BKN10}). This leads to the main result of \cite{KS-amenable}
\begin{theorem}[{\cite[Theorem 1.1]{KS-amenable}}]
There are $2^{\aleph_0}$ many Grothendieck pairs $(\Gamma,A)$ where $A$ is amenable and $\Gamma$ contains a non-abelian free group. In particular, amenability is not a profinite property.
\end{theorem}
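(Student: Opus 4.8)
The plan is to combine the branch-group machinery sketched above with the embedding theorem of \cite{KS-embedding} and a cardinality count. First I would start from a residually finite amenable group that contains a non-abelian free group as a subgroup — for instance a suitable wreath product such as $\Z \wr \Z$ does \emph{not} work since it is amenable but the free subgroup requirement forces us instead to pass to something like $F \times A_0$ — wait, that is not amenable. The correct starting point is more subtle: one wants the \emph{big} group $\Gamma$ to be amenable while \emph{containing} a non-abelian free subgroup, which is impossible for a single fixed group. So the statement must be read as: the pair consists of an amenable group $A$ and a larger group $\Gamma$ (with $\widehat{A}\cong\widehat{\Gamma}$) which contains a non-abelian free group; these roles are not symmetric. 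Thus I would begin with a non-abelian free group $F$ (residually finite), and embed it via the spinal construction of \cite{KS-embedding} into a perfect branch group $B_F$ which, crucially, is \emph{not} amenable (it contains $F$). The task is then to build, inside the same ambient $\Aut(\T)$ and with the same profinite completion, a second branch group $A$ which \emph{is} amenable, together with an inclusion making $(B_F, A)$ or rather $(\Gamma, A)$ a Grothendieck pair.

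The key technical input is \cite[Corollary 4.6]{KS-amenable}: for the Neumann-type spinal constructions used there, the profinite completion depends only on the action on the first level of the tree. So I would fix the first-level action (a transitive action of some perfect finite group like $\mathrm{Alt}(n)$) and then vary the data defining the spinal action — the choice of spines, the groups acting along them, and which generators are ``switched on'' at each level — in such a way that (i) the resulting branch group always has the same first-level action, hence by the corollary the same profinite completion $\widehat{\Gamma}$ (an iterated wreath product), and (ii) one can simultaneously arrange a subgroup $A\le\Gamma$ which is amenable. For amenability I would invoke the boundary-action criterion from \cite{JNdlS} (or the more specialized \cite{BKN10}): amenability of a branch group can be detected from the action on $\partial\T$, and the spinal constructions can be tuned — by choosing the spinal groups and the spine pattern appropriately (e.g. so that all sections along generic rays are trivial or abelian) — to land in the amenable regime while still keeping the congruence subgroup property via the lemma quoted above (rigid vertex stabilizers perfect, $\St_\Gamma(\ell)=\RiSt_\Gamma(\ell)$). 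Running the inclusion $A\hookrightarrow\Gamma$ through the functoriality of profinite completion and Lemma~\ref{lem:correspondence}, one checks $\widehat{A}\xrightarrow{\cong}\widehat{\Gamma}$, so $(\Gamma,A)$ is a Grothendieck pair provided $A\subsetneq\Gamma$ properly.

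For the cardinality statement I would exploit the flexibility of the spine data. At each level of the tree one has a binary (or larger) choice of how to decorate the spinal action; an infinite sequence of such independent choices yields $2^{\aleph_0}$ distinct branch groups, all with the \emph{same} first-level action and hence — by \cite[Corollary 4.6]{KS-amenable} — the same profinite completion $\widehat{\Gamma}$. A standard argument (as in the construction of uncountably many non-isomorphic groups with the same profinite completion referenced just before this theorem) shows that only countably many of these can be pairwise isomorphic: each is finitely generated, so there are only countably many isomorphism types among any given presentation complexity, but one must actually verify that the uncountable family contains uncountably many isomorphism classes — this is where a concrete invariant distinguishing the decorations (a growth-type invariant, a subgroup-theoretic invariant, or the structure of the lattice of rigid stabilizers) is needed. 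For each such $\Gamma$ in an uncountable subfamily, the embedded amenable subgroup $A$ provides the Grothendieck pair $(\Gamma,A)$ with $A$ amenable and $\Gamma\supseteq F$ non-abelian free. The final clause — amenability is not a profinite property — is then immediate: $A$ is amenable, $\widehat{A}\cong\widehat{\Gamma}$, but $\Gamma$ contains a non-abelian free group and so is non-amenable.

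The main obstacle I expect is the simultaneous balancing act in the second paragraph: one must choose the spinal data so that the \emph{same} construction produces, on the one hand, the non-amenable group $\Gamma$ containing $F$ and, on the other, an amenable subgroup $A$ with the \emph{identical} profinite completion, while keeping CSP throughout. Getting $A$ to sit inside $\Gamma$ as a proper subgroup inducing an isomorphism on profinite completions — rather than merely producing two abstractly profinitely isomorphic groups — is the crux, and it is presumably handled exactly as in \cite{KS-embedding}: the amenable $A$ is built first (as a spinal branch group with amenable sections) and $F$ is grafted in along an additional spine so that $\langle A, F\rangle = \Gamma$ still has the same first-level action, whence $\widehat{A}\cong\widehat{\Gamma}$ by the corollary. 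Verifying that this grafting does not destroy the congruence subgroup property, and that it genuinely enlarges the group (so the inclusion is proper), is the step that requires the most care.
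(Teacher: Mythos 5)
Your proposal matches the paper's route: it works with branch groups \`a la Neumann, uses the congruence subgroup property via perfect rigid stabilizers, invokes the fact that the profinite completion of these spinal constructions depends only on the first-level action (\cite[Corollary 4.6]{KS-amenable}), embeds a free group via a spinal action following \cite{KS-embedding}, tests amenability of the smaller group through its boundary action using \cite{JNdlS} (or \cite{BKN10}), and obtains the \(2^{\aleph_0}\) count by varying the spinal data while holding the first-level action fixed. You correctly identify the crux — arranging that the inclusion \(A \hookrightarrow \Gamma\) itself induces the isomorphism on completions, which is handled precisely because both groups have CSP and identical closure in \(\Aut(\mathcal T)\) — so this is essentially the argument the survey is sketching.
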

The result in \cite{KS-amenable} only claims that there are uncountably many Grothendieck pairs, but the proof shows  that the cardinality is~$2^{\aleph_0}$.
Moreover, one may take a single finitely generated amenable group $A$ as part of $2^{\aleph_0}$ Grothendieck pairs. The method is rather flexible and should be useful in constructing Grothendieck pairs $(\Gamma,\Delta)$ of groups where $\Gamma$ contains a prescribed finitely generated residually finite group $H$ and $\Delta$ does not contain a subgroup isomorphic to $H$. We expect that containing a fixed (non-trivial) finitely generated residually finite group as a subgroup in general is not a profinite property.

\subsection{Beyond branch groups}
 The profinite completion of a branch group of Neumann type is an iterated wreath product, i.e. a special case of an iterated semidirect product. Recent work \cite{KS-telescopes, KS-hereditarily} suggests that ideas and constructions that give rise to branch groups with a prescribed profinite completion might work in a far more general setting. 
 
In an imprecise sense, it seems that a finitely generated profinite group $G = \varprojlim G_j$ that can be described as an iterated semidirect product of finite groups -- i.e. $G_{j+1} = K_j \rtimes G_{j}$ -- can often be realized to be a profinite completion of a group with generators defined recursively.
Beyond the branch group case, this has been successfully carried out in two cases.

\subsubsection{Direct products and telescopes}
Consider an infinite direct product $G = \prod_{i} \mathrm{Alt}(n_i)$ of alternating groups with $\dots n_i<n_{i+1}<\dots$.
It was proven by Kassabov-Nikolov \cite{KassabovNikolov} that these profinite groups are profinite completions of finitely generated residually finite groups. However, their method produced groups that required  $\geq 10$ generators. Since the profinite group $G$ is two generated, they wondered whether $2$-generated groups exist whose profinite completion is an infinite product of alternating groups.
Transferring ideas from the theory of branch groups to this setting led  to the notion of a telescope of groups  \cite{KS-telescopes} that allowed to prove
\begin{theorem}[{\cite[Theorem 1.4]{KS-telescopes}}]
For every $d \geq 5$
there is a residually finite $2$-generated group $\Gamma$ with $\widehat{\Gamma} \cong \prod_{n=1}^\infty
\mathrm{Alt}(d^n)$.
\end{theorem}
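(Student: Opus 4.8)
The plan is to construct $\Gamma$ as an explicit \emph{telescope of groups} modelled on the branch-group philosophy: we want $\Gamma$ to be a $2$-generated, residually finite group whose finite quotients, arranged along a natural descending chain, reproduce the iterated wreath/product structure $\prod_{n\ge 1}\mathrm{Alt}(d^n)$. First I would fix $d\ge 5$ and consider the finite groups $\mathrm{Alt}(d^n)$, together with the standard embeddings $\mathrm{Alt}(d^{n})\hookrightarrow\mathrm{Alt}(d^{n+1})$ coming from the diagonal action on $d^{n+1}=d\cdot d^{n}$ points, or rather the "block" decomposition that realises $\mathrm{Alt}(d^{n+1})$ as sitting inside $\mathrm{Alt}(d^n)\wr\mathrm{Alt}(d)$ (acting on $d$ blocks of size $d^n$). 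The key point, exploited already in the branch-group case via the congruence subgroup property, is that $\widehat{\Gamma}$ can be read off from the first level of the action, so it suffices to engineer a recursive generating set whose level stabilisers stabilise as prescribed.

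The main steps, in order: (1) Set up the telescope: choose a rooted tree-like inverse system and, imitating the Neumann/spinal construction, pick two generators $a,b$ of $\Gamma$ — one acting "horizontally" to generate a copy of $\mathrm{Alt}(d)$ permuting the top-level blocks, and one defined recursively ("along a spine") so that the subgroup it generates, together with its conjugates, produces $\mathrm{Alt}(d^n)$ at level $n$. (2) Verify residual finiteness: since $\Gamma$ acts faithfully (by construction) on the inverse system of finite sets, it embeds in the corresponding profinite group, so it is residually finite. (3) Identify the profinite completion: show that the closure $\overline{\Gamma}$ in $\prod_n\mathrm{Alt}(d^n)$ (or the relevant iterated wreath product) is all of it, and that the congruence subgroup property holds — i.e.\ every finite-index subgroup contains a level stabiliser — so that $\widehat{\Gamma}=\overline{\Gamma}$. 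For this last point I would invoke the same mechanism as Corollary~4.4 of \cite{KS-amenable}: if the relevant "rigid" pieces are perfect (which they are here, as $\mathrm{Alt}(d^n)$ is simple for $d\ge 5$) and rigid level stabilisers coincide with level stabilisers, then $\Gamma$ is just-infinite with CSP. (4) Finally, count generators: by the recursive design $\Gamma=\langle a,b\rangle$, giving exactly two generators, which is the whole point of the theorem (improving on the $\ge 10$ generators of Kassabov--Nikolov).

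The hard part will be step (3), and within it the surjectivity/CSP analysis: one must show that the two carefully chosen generators really do generate a \emph{dense} subgroup of the full iterated product $\prod_n\mathrm{Alt}(d^n)$ — not merely some proper closed subgroup — and that no exotic finite quotients appear that would enlarge $\widehat{\Gamma}$ beyond the target. This requires a delicate inductive argument controlling the image of $\Gamma$ at each level $n$, typically by proving that the level-$n$ image is all of $\mathrm{Alt}(d^n)$ using that $\mathrm{Alt}(d^{n-1})$ (the level-$(n-1)$ image) is already full together with a transitivity/primitivity input at the new level, and then combining this with the perfectness of $\mathrm{Alt}(d^n)$ to force $\St_\Gamma(n)=\RiSt_\Gamma(n)$. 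The telescope formalism of \cite{KS-telescopes} is precisely what makes this induction go through uniformly in $n$, so I would organise the proof so that, once the telescope axioms are verified for this particular choice of data, steps (2)--(4) follow from the general theory developed there.
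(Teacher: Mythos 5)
The paper does not prove this theorem; it only cites \cite{KS-telescopes}*{Theorem~1.4}, so your attempt is being measured against what a correct proof must accomplish rather than against a proof in the text. With that understood, there is a genuine conceptual gap in your sketch. You build $\Gamma$ as a Neumann-style spinal group inside $\Aut(\T)$ for a rooted $d$-regular tree, and you propose to deduce CSP from the branch-group criterion (Corollary~4.4 of \cite{KS-amenable}), so that $\widehat{\Gamma}$ is the closure of $\Gamma$ in $\Aut(\T)$. But any subgroup of $\Aut(\T)$ has level-$n$ image contained in the $n$-fold iterated wreath product $\Sym(d)\wr\cdots\wr\Sym(d)$, a group whose order is far smaller than $|\Sym(d^n)|$ once $n\geq 2$. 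So the image of the level-$n$ action can never be $\mathrm{Alt}(d^n)$, and the closure of $\Gamma$ in $\Aut(\T)$ would be (a closed subgroup of) the infinitely iterated wreath product $\cdots\wr\mathrm{Alt}(d)\wr\mathrm{Alt}(d)$ — a profinite group that is not isomorphic to the direct product $\prod_{n\geq 1}\mathrm{Alt}(d^n)$. The branch machinery naturally produces iterated wreath products precisely because the tree structure is preserved; the whole point of the telescope formalism is to escape that constraint.

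Consequently, the key missing idea is how to organize the data so that the finite quotients are genuine direct products $\prod_{m\leq n}\mathrm{Alt}(d^m)$ rather than wreath-type extensions. Concretely, $\Gamma$ must act on a sequence of sets $Y_n$ of size $d^n$ on which the image is all of $\mathrm{Alt}(d^n)$, with no compatibility maps $Y_{n+1}\to Y_n$ (so no single rooted tree), and the recursive definition of the two generators must force the various level actions to become ``independent'' so that the kernel of the level-$n$ projection is dense in $\prod_{m>n}\mathrm{Alt}(d^m)$. Your parenthetical ``(or the relevant iterated wreath product)'' signals you are aware these two targets differ, but the argument never resolves that ambiguity, and the CSP lemma you invoke is specific to branch subgroups of $\Aut(\T)$ and cannot be applied as stated. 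A correct proof along these lines must replace both the tree-preserving acting set and the branch-group CSP criterion with the corresponding telescope analogues, and in particular must supply the surjectivity-plus-independence induction that you correctly identify in step~(3) as the hard part, but in the direct-product rather than wreath-product setting.
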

The description of these groups is rather concrete and admits some flexibility. Varying these groups one can find Grothendieck pairs of groups with profinite completion $\prod_{i} \mathrm{Alt}(n_i)$. Building on work of Kassabov and Ershov, Jaikin-Zapirain, Kassabov \cite{Ershov-Jaikin-Kassabov} allows to prove the following
\begin{theorem}[{\cite[Theorem 1.5]{KS-telescopes}}]
There is a Grothendieck pair $(\Gamma,A)$, where $A$ is amenable and $\Gamma$ has property ($\tau$).
\end{theorem}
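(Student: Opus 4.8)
\emph{Proof strategy.} The plan is to realise both $\Gamma$ and $A$ as telescopes in the sense of \cite{KS-telescopes}: one runs the construction so that $A$ sits inside $\Gamma$ as a proper subgroup inducing an isomorphism of profinite completions, and then secures the two extra features --- property~($\tau$) for $\Gamma$ and amenability for $A$ --- by a careful choice of the defining data.

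First I would fix a sequence of degrees $n_1 < n_2 < \cdots$ and recall from the telescope construction of \cite{KS-telescopes} that it yields a finitely generated residually finite group $\Gamma$ with $\widehat{\Gamma} \cong \prod_i \mathrm{Alt}(n_i)$. Since the $\mathrm{Alt}(n_i)$ are pairwise non-isomorphic finite simple groups, every closed normal subgroup of $\widehat{\Gamma}$ is a sub-product, so the finite quotients of $\Gamma$ are exactly the groups $\prod_{i \in F}\mathrm{Alt}(n_i)$ with $F \subseteq \N$ finite. To obtain property~($\tau$) for $\Gamma$ one chooses the generators fed into the telescope so that their images turn this family of finite quotients into a family of expanders with spectral gap bounded away from zero; that such a choice of generators is available is precisely the input of Ershov--Jaikin-Zapirain--Kassabov \cite{Ershov-Jaikin-Kassabov}, building on Kassabov's expansion results for alternating groups. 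As the finite quotients of $\Gamma$ are exactly these products, a uniform spectral gap for them is equivalent to property~($\tau$) for $\Gamma$.

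Next I would use the flexibility of the telescope construction --- the same flexibility that, as noted above, produces Grothendieck pairs with profinite completion $\prod_i \mathrm{Alt}(n_i)$ --- to build a second finitely generated group $A$ together with an embedding $A \hookrightarrow \Gamma$: concretely $A$ is assembled from the \emph{same} profinite data, but via a sparser recursive recipe, so that $A$ is a proper subgroup of $\Gamma$ that still surjects onto each $\prod_{i \le k}\mathrm{Alt}(n_i)$. Because the profinite data is unchanged, the closure of $A$ in $\widehat{\Gamma}$ is all of $\prod_i \mathrm{Alt}(n_i)$, and the inclusion induces an isomorphism $\widehat{A} \cong \widehat{\Gamma}$; thus $(\Gamma,A)$ is a Grothendieck pair. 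Amenability of $A$ would then be established as in \cite{KS-amenable}: the recursive description of $A$ equips it with an action on the boundary of an associated rooted tree, and one shows --- via the criterion of Juschenko--Nekrashevych--de la Salle, or by exhibiting $A$ as a suitable increasing union of amenable subgroups --- that this boundary action forces $A$ to be amenable.

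The main obstacle is the compatibility of these demands: property~($\tau$) requires generators for $\Gamma$ that mix the alternating factors as vigorously as \cite{Ershov-Jaikin-Kassabov} permits, whereas amenability of $A$ requires a tightly constrained, boundary-respecting action, and one needs a single telescope datum that simultaneously supports a proper, dense, amenable sub-telescope $A$ inside an expanding telescope $\Gamma$. There is no contradiction in principle --- $A$ and $\Gamma$ have the same finite quotients, but these are Cayley graphs with respect to different generating sets, so they may fail to be expanders for $A$ while remaining expanders for $\Gamma$ --- but arranging both at once is the crux of the proof, and is exactly where the combined machinery of \cite{KS-telescopes} and \cite{Ershov-Jaikin-Kassabov} is needed.
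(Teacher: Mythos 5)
Note first that the survey does not prove this statement: it cites \cite{KS-telescopes}*{Theorem 1.5} and only remarks that the telescope construction is flexible enough to yield Grothendieck pairs with profinite completion $\prod_i \mathrm{Alt}(n_i)$ and that \cite{Ershov-Jaikin-Kassabov} is the input for property~$(\tau)$, so there is no paper proof to compare against. Your high-level plan is consistent with those remarks, and your central observation is correct and worth making explicit: property~$(\tau)$ depends on a choice of finite generating set via the Cayley graphs of finite quotients, so an amenable $A$ and a $(\tau)$-group $\Gamma$ can share all finite quotients without contradiction. You also correctly use the semisimplicity of $\prod_i \mathrm{Alt}(n_i)$ to describe its open normal subgroups as sub-products.

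That said, your sketch leaves the two genuinely hard steps open. You invoke Ershov--Jaikin-Zapirain--Kassabov for expansion, but that work concerns groups graded by root systems and produces Kazhdan groups surjecting onto alternating groups; why a recursively defined telescope generating set can be arranged to coincide with, or dominate, an expanding set of that type is neither explained nor obvious, and is not simply a matter of ``feeding in'' different generators. Likewise, the construction of $A$ as a proper, dense, amenable subgroup of $\Gamma$ is entirely schematic: ``a sparser recursive recipe'' does not explain why $A$ actually embeds in $\Gamma$ rather than merely having the same profinite completion, nor why $A$ is amenable --- and producing a finitely generated amenable group with profinite completion $\prod_i \mathrm{Alt}(n_i)$ is itself a substantial part of the theorem, not a routine consequence of \cite{KS-amenable}. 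You candidly flag these compatibility issues as ``the crux \ldots exactly where the combined machinery is needed,'' which is an honest assessment, but it means the proposal defers precisely the steps that make the theorem non-trivial.
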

Property $(\tau)$ is a weakening of Kazhdan's property $(T)$: the trivial representation is isolated among all irreducible representations with finite image; see \cite{Lubotzky-tau}.
\begin{question}
Is there a Grothendieck pair $(\Gamma,A)$ where $A$ is amenable and $\Gamma$ has property (T)?
\end{question} 

\subsubsection{Hereditarily just-infinite groups with positive first $\ell^2$-Betti numbers}
Let $\Pi = (p_i)_{i \in \N}$ be a sequence of prime numbers. A group $\Gamma$ is said to be \emph{$\Pi$-graded}, if there is a descending chain of finite index normal subgroups $\Gamma = N_0 \trianglerighteq N_1 \trianglerighteq N_2 \dots$ of $\Gamma$ with 
$\bigcap_{i\in \N} N_i = \{1\}$ such that $N_{i-1}/N_{i}$ is an elementary $p_i$-group. 
In general, being $\Pi$-graded for some $\Pi$ is not a strong condition. However, for torsion groups a $\Pi$-grading can be very useful; this was already observed by Osin in his construction of finitely generated residually finite torsion groups with positive rank gradient \cite{Osin2011}.
It was shown in \cite[Proposition 2.3]{KS-hereditarily} that if $\Pi$ is a sequence of pairwise distinct primes and $\Gamma$ is a $\Pi$-graded \emph{torsion group}, then the profinite completion of $\Gamma$ is determined by the grading:
\[
	\widehat{\Gamma} = \varprojlim_{i \in \N} \Gamma/N_i.
\]
This implies that the profinite completion in this case is an iterated semi-direct product of finite groups $G_{j+1} = V_j \rtimes G_{j}$ where $V_j$ is an elementary abelian $p_j$-group and a $G_j$-module.
More surprisingly, every quotient of $\Gamma$ is $\Pi$-graded and hence residually finite.

This allows to take another approach in the construction of $\Pi$-graded torsion groups. As a first step one constructs the profinite group $G$ as an iterated semi-direct product of elementary abelian groups. In the second step one modifies the involved modules $V_j$ to find a dense torsion subgroup $\Gamma$. The group $G$ will automatically be the profinite completion of $\Gamma$. This approach allows to control further properties of the group $\Gamma$. In particular, Eduard Schesler and the second author proved
\begin{theorem}[\cite{KS-hereditarily}]\label{thm:hji}
Let $\varepsilon > 0$ and let $d\geq 2$.
For every sequence $\Pi =(p_i)_{i\in \N}$ of pairwise distinct primes, there is a $d$-generated, hereditarily just-infinite, $\Pi$-graded torsion group $\Gamma$
with first $\ell^2$-Betti number at least $d-1-\varepsilon$.
\end{theorem}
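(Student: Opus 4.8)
The plan is to follow the two-step \emph{profinite-completion-first} strategy indicated just above the statement. First one builds a $d$-generated profinite group $G=\varprojlim_j G_j$ as an iterated semidirect product $G_{j+1}=V_j\rtimes G_j$, where $V_j$ is a finite $\F_{p_{j+1}}$-vector space equipped with a $G_j$-module structure; then one exhibits $G$ as the profinite completion of a $d$-generated \emph{torsion} subgroup $\Gamma\le G$. If this succeeds, the chain $N_i=\Gamma\cap\ker(G\to G_i)$ is automatically a $\Pi$-grading of $\Gamma$, because by density $N_{i-1}/N_i\cong V_{i-1}$ is elementary abelian of exponent $p_i$; Proposition~2.3 of~\cite{KS-hereditarily} then yields $\widehat\Gamma=G$, and the very same argument applies to every finite-index subgroup of $\Gamma$, which is again $\Pi$-graded with profinite completion the corresponding open subgroup of $G$. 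So three things must be arranged: $\Gamma$ is $d$-generated, dense and torsion; $G$ and all its open subgroups are just-infinite as profinite groups (so that $\Gamma$ and all its finite-index subgroups are just-infinite); and $b_1^{(2)}(\Gamma)\ge d-1-\varepsilon$. Note that $b_1^{(2)}$ of a $d$-generated infinite group never exceeds $d-1$, so the estimate sought is essentially optimal.

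Hereditary just-infiniteness is secured by the choice of modules in the first step, which is precisely where the freedom to \emph{modify} the $V_j$ is used: one takes them irreducible and sufficiently generic — no $G_j$-submodule surviving to any proper subquotient, and no invariant vectors even after restricting to an arbitrary open subgroup of $G_j$ — so that $G$ has no nontrivial closed normal subgroup of infinite index, and this persists after passing to an open subgroup. Density of $\Gamma$ then transports just-infiniteness, hereditarily, from $G$ to $\Gamma$.

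For the torsion property together with the $\ell^2$-estimate I would run an Osin-type recursion~\cite{Osin2011} down the tower, enumerating group elements and successively killing the normal closure of a proper power $w^{n}$ of an element of infinite order. The key constraint is that $n$ must be a \emph{product of many distinct primes $p_j$ of large index}: this is forced, since in a $\Pi$-graded torsion group every element has square-free order supported on $\Pi$, and one wants the elements being killed to lie at deep levels $N_{\ell-1}\setminus N_\ell$, where they automatically acquire order divisible by $p_\ell$, hence genuinely large. Arranging, exactly as in Osin's construction, that these Dehn-filling-type quotients never collapse the group and that every element is eventually killed makes $\Gamma$ torsion. The behaviour of the first $\ell^2$-Betti number is then controlled step by step: standard $\ell^2$-homological arguments — continuity of $b_1^{(2)}$ along increasing unions of quotients, and a Fox-calculus estimate showing that killing the normal closure of $w^{n}$ costs at most $1/\operatorname{ord}(w)$, because the relevant vector of Fox derivatives factors through the idempotent $\tfrac1n\sum_{k=0}^{n-1}w^k$ of the group von Neumann algebra, whose range has von Neumann dimension $1/\operatorname{ord}(w)$ — give $b_1^{(2)}(\Gamma)\ge d-1-\sum_k d/\operatorname{ord}_\Gamma(w_k)$, and choosing the deep primes in the recursion to grow fast enough makes this sum smaller than $\varepsilon$.

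The main obstacle is to make all of these demands compatible within a single inductive construction over the tower $(G_j)$. Osin's torsion mechanism naturally wants to impose relations of small exponent on elements of a fixed finite-index subgroup, whereas the $\ell^2$-bound requires the elements being killed to have \emph{genuinely large order} in the final group $\Gamma$ — not merely that the chosen exponent $n$ be large, since $\operatorname{ord}_\Gamma(w)$ is only a divisor of $n$ and could in principle be pulled down by relations imposed later. Ensuring that the order stays large — by only ever killing elements that sit deeper in the tower than anything touched before, with exponents supported on the primes available at that depth — while simultaneously respecting that the grading primes are the \emph{prescribed} sequence $\Pi$, and keeping the modules $V_j$ large enough to force hereditary just-infiniteness, is the delicate bookkeeping that constitutes the technical core of the argument.
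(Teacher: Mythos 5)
Your framework — build $G$ as an iterated semidirect product, realize it as the profinite completion of a $d$-generated dense torsion subgroup $\Gamma$, let the $\Pi$-grading come from $N_i = \Gamma \cap \ker(G\to G_i)$, and use Proposition~2.3 of~\cite{KS-hereditarily} — matches the two-step strategy the survey sketches, and the three items you identify as ``to be arranged'' are indeed the right ones. However, there are two genuine gaps.

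First, the assertion that ``density of $\Gamma$ then transports just-infiniteness, hereditarily, from $G$ to $\Gamma$'' is false as stated: a dense subgroup of a just-infinite profinite group need not be just-infinite (for instance $\Z[1/q]$ is dense in $\Z_p$ but has the infinite-index subgroup $\Z$). What you actually need, and what the cited Proposition~2.3 furnishes, is that every \emph{quotient} of a $\Pi$-graded torsion group is again $\Pi$-graded — hence residually finite, hence embeds in its profinite completion, which Proposition~2.3 identifies with $G/\overline{N}$. Combined with just-infiniteness of $G$ this forces $\Gamma/N$ to be finite. You invoke the proposition for $\Gamma$ and its finite-index subgroups but never for quotients, and ``density transports'' papers over exactly the step where the $\Pi$-graded-torsion structure is essential.

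Second, the Osin-type ``kill the normal closure of $w^n$'' recursion constructs $\Gamma$ as an \emph{iterated quotient} of something, but the framing requires $\Gamma$ to be a \emph{dense subgroup of a pre-built $G$} — a quotient of a dense subgroup is not again a subgroup of $G$. The survey's description of the argument is the opposite organisation: one defines the generators recursively as elements of $G$ (in the spirit of Grigorchuk--Neumann) and \emph{modifies the modules $V_j$} so that these specific elements become torsion, keeping full control over $V_j$ (needed for hereditary just-infiniteness) throughout. In the quotienting organisation you propose, the $V_j$ are implicitly \emph{determined by} the relations you impose rather than freely chosen, and you lose exactly the handle you claimed to use for genericity/irreducibility. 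You flag this tension honestly in your last paragraph, but it is not a bookkeeping problem that one can defer; resolving it is the content of the theorem, and the Osin-style organisation does not obviously let you do so.
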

A group $\Gamma$ is \emph{just-infinite} if every non-trivial normal subgroup has finite index. It is \emph{hereditarily just-infinite}, if every finite index subgroup is just-infinite.  
To our knowledge virtually abelian groups and centerless $S$-arithmetic groups in higher rank simple groups were the only known examples of hereditarily just-infinite (discrete) groups known before \cite{KS-hereditarily} appeared.

The profinite completion of $\Gamma$ in the theorem involves infinitely many primes and these groups are not residually-$p$ for a single prime. In fact, this is necessary. It was shown in \cite{Jaikin-Kionke} that a finitely generated just-infinite group that is residually-$p$ for some prime $p$ has a vanishing first $\ell^2$-Betti number. It would be interesting to know if there are hereditarily just-infinite groups with positive first $\ell^2$-Betti number whose profinite completion involves only finitely many primes.

\section{Profinite properties} \label{section:profinite-properties}
In this last section we finally discuss some properties that are known to be profinite.

\subsection{Laws and applications}
Recall that a discrete group $\Gamma$ is virtually $P$ if it has a subgroup of finite index with $P$.
\begin{lemma}\label{lem:virtual}
Let $\mathcal{C} \subset \mathcal{RF}$ be a class of groups that is closed under subgroups.
If $P$ is a profinite property within $\mathcal{C}$, then being virtually $P$ is a profinite property within $\mathcal{C}$.
\end{lemma}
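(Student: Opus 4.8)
The plan is to transfer a finite-index ``witness'' subgroup across the isomorphism of profinite completions, using the correspondence between finite-index subgroups of a residually finite group and open subgroups of its profinite completion (Lemma~\ref{lem:correspondence}). So suppose $\Gamma_1, \Gamma_2 \in \mathcal{C}$ and $\phi \colon \widehat{\Gamma}_1 \to \widehat{\Gamma}_2$ is a (topological) isomorphism, and suppose $\Gamma_1$ is virtually $P$. The statement is symmetric in $\Gamma_1$ and $\Gamma_2$, so it suffices to show that $\Gamma_2$ is virtually $P$. Choose a finite-index subgroup $H_1 \le \Gamma_1$ that has $P$.

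First I would note that, by Lemma~\ref{lem:correspondence}\,(1), the closure $\overline{H}_1$ of $H_1$ in $\widehat{\Gamma}_1$ is open and equals $\widehat{H}_1$. Hence $U := \phi(\overline{H}_1)$ is an open subgroup of $\widehat{\Gamma}_2$. Next I would set $H_2 := U \cap \Gamma_2$; by Lemma~\ref{lem:correspondence}\,(2) this subgroup has finite index in $\Gamma_2$, and since the two constructions of Lemma~\ref{lem:correspondence} are mutually inverse we get $\overline{H}_2 = U$. Applying Lemma~\ref{lem:correspondence}\,(1) once more, $\widehat{H}_2 = \overline{H}_2 = U = \phi(\widehat{H}_1)$, which is isomorphic as a profinite group to $\widehat{H}_1$. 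Finally, because $\mathcal{C}$ is closed under subgroups, both $H_1$ and $H_2$ lie in $\mathcal{C}$; they have isomorphic profinite completions, and $H_1$ has $P$, so since $P$ is profinite within $\mathcal{C}$ the subgroup $H_2$ also has $P$. Thus $\Gamma_2$ contains the finite-index subgroup $H_2$ with property $P$, i.e.\ $\Gamma_2$ is virtually $P$. By symmetry, being virtually $P$ is profinite within $\mathcal{C}$.

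The only point that needs a moment of care — hardly a genuine obstacle — is the identification $\overline{H}_2 = U$, i.e.\ that the closure in $\widehat{\Gamma}_2$ of the transferred subgroup $U \cap \Gamma_2$ recovers the \emph{whole} open subgroup $U$ rather than something strictly smaller. This is precisely the bijectivity assertion at the end of Lemma~\ref{lem:correspondence} (equivalently, $U \cap \Gamma_2$ is dense in $U$, which is open and hence closed, so $\Gamma_2$ being dense in $\widehat{\Gamma}_2$ forces density of the intersection in $U$). No normality of $H_1$ is needed anywhere, and the hypothesis that $\mathcal{C}$ be closed under subgroups enters exactly to keep the witness subgroups $H_1, H_2$ inside the class so that profiniteness of $P$ within $\mathcal{C}$ can be applied.
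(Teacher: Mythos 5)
Your proof is correct and follows essentially the same route as the paper's: transfer the finite-index witness subgroup $H_1$ via the isomorphism of completions using Lemma~\ref{lem:correspondence}, observe that the resulting $H_2 \le \Gamma_2$ has $\widehat{H}_2 \cong \widehat{H}_1$, and invoke closure under subgroups together with profiniteness of $P$. You spell out the density step $\overline{H}_2 = U$ more explicitly than the paper does, but the argument is the same.
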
 
\begin{proof}
Recall from Lemma \ref{lem:correspondence} that the finite index subgroups of $\Gamma$ are in correspondence with the open subgroups of $\widehat{\Gamma}$.
Let $\Gamma_1,\Gamma_2 \in \mathcal{C}$ and let  $\Psi \colon \widehat{\Gamma}_1 \to \widehat{\Gamma}_2$ be a continuous isomorphism.

Assume that $\Gamma_1$ is virtually $P$ and let $H_1 \leq_{\text{f.i.}} \Gamma_1$ be a finite index subgroup with $P$. 
Define $H_2 = \psi(\overline{H}_1)\cap \Gamma_2$. Then $\widehat{H}_2 \cong \overline{H}_2 \cong \overline{H}_1 \cong \widehat{H}_2$. 
By assumption $H_1, H_2$ lie in $\mathcal{C}$ and since $P$ is a profinite property within $\mathcal{C}$, we deduce that $H_2$ has $P$.
\end{proof}

Let $F_d$ be a finitely generated free group with generators $x_1,\dots,x_d$. We will refer to the elements of $F_d$ as $d$-letter words.  Let $G$ be a group. A word $w = w(x_1,\dots,x_d) \in F_d$ induces a \emph{word map} $w_G \colon G^d \to G$ by evaluating $(g_1,\dots,g_d) \mapsto w(g_1,\dots,g_n)$. If $G$ is a topological group, then the word map $w_G$ is continuous.

\begin{definition}
Let $G$ be a group. A word $w \in F_d$ is a \emph{law for $G$}, if 
$w_G(g_1,\dots,g_d) = 1_G$ for all $g_1,\dots,g_d$.
 \end{definition}
 For instance, a group $G$ is abelian exactly if $[x_1,x_2] = x_1x_2x_1^{-1}x_2^{-1}$ is a law for $G$.

\begin{proposition}\label{prop:law}
Let $w \in F_d$ and
let $\Gamma$ be residually finite group. Then $w$ is a law for $\Gamma$ if and only if it is a law for $\widehat{\Gamma}$.

\medskip

In particular, satisfying the law $w$ is a profinite property.
\end{proposition}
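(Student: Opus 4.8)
The plan is to exploit two facts already available to us: the density of \(\Gamma\) in \(\widehat{\Gamma}\) (established in the preliminaries) and the continuity of the word map \(w_{\widehat{\Gamma}}\colon \widehat{\Gamma}^d \to \widehat{\Gamma}\), which holds because multiplication and inversion in the topological group \(\widehat{\Gamma}\) are continuous. The ``if'' direction is essentially free: since we identify \(\Gamma\) with a subgroup of \(\widehat{\Gamma}\), any \(w\) that is a law for \(\widehat{\Gamma}\) restricts to a law for \(\Gamma\).

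For the ``only if'' direction, suppose \(w\) is a law for \(\Gamma\). I would consider the set \(Z = w_{\widehat{\Gamma}}^{-1}(\{1_{\widehat{\Gamma}}\}) \subseteq \widehat{\Gamma}^d\). Because \(\widehat{\Gamma}\) is Hausdorff, the singleton \(\{1_{\widehat{\Gamma}}\}\) is closed, so \(Z\) is closed as the preimage of a closed set under a continuous map. By hypothesis \(\Gamma^d \subseteq Z\). Since \(\Gamma\) is dense in \(\widehat{\Gamma}\), the finite product \(\Gamma^d\) is dense in \(\widehat{\Gamma}^d\) (a finite product of dense subsets is dense in the product topology). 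Hence \(Z \supseteq \overline{\Gamma^d} = \widehat{\Gamma}^d\), which says precisely that \(w\) is a law for \(\widehat{\Gamma}\). For the ``in particular'' clause, note that a topological group isomorphism \(\Psi\colon \widehat{\Gamma}_1 \xrightarrow{\cong} \widehat{\Gamma}_2\) intertwines the word maps, \(\Psi \circ w_{\widehat{\Gamma}_1} = w_{\widehat{\Gamma}_2} \circ \Psi^{\times d}\), so \(w\) is a law for \(\widehat{\Gamma}_1\) if and only if it is a law for \(\widehat{\Gamma}_2\); combining this with the equivalence just proved shows \(w\) is a law for \(\Gamma_1\) iff it is a law for \(\Gamma_2\), i.e.\ satisfying \(w\) is a profinite property.

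I do not expect a genuine obstacle here; the argument is a soft topological one. The only points that need a moment of care are that \(\Gamma^d\) is dense in the \emph{product} \(\widehat{\Gamma}^d\) (not merely that each factor is dense) and that the word map really is continuous with values in a Hausdorff space, so that its zero set is closed. It is worth remarking that the same proof works verbatim for an arbitrary set \(W \subseteq \bigcup_d F_d\) of words, since \(\bigcap_{w \in W} w_{\widehat{\Gamma}}^{-1}(\{1\})\) is still closed; thus lying in any fixed variety of groups is a profinite property. In particular, applying this to \(w = [x_1,x_2]\) recovers the claim promised in Example~\ref{ex:abelian} that being abelian is a profinite property, and hence, by Lemma~\ref{lem:virtual}, being virtually abelian is profinite as well.
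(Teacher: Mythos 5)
Your argument is correct and matches the paper's proof essentially verbatim: both directions rely on identifying \(\Gamma\) as a dense subgroup of \(\widehat{\Gamma}\), noting that the word map \(w_{\widehat{\Gamma}}\) is continuous so its zero set is closed, and observing that a closed set containing the dense subset \(\Gamma^d\) must be all of \(\widehat{\Gamma}^d\). Your extra remarks (making the ``in particular'' explicit and noting that the same argument shows membership in any variety is profinite) are sound additions but do not change the underlying approach.
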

\begin{proof}
As $\Gamma$ is a subgroup of $\widehat{\Gamma}$ it is clear the if $w$ is a law for $\widehat{\Gamma}$, then it is a law for $\Gamma$.

For the converse direction we note that the density of $\Gamma$ in $\widehat{\Gamma}$ implies that $\Gamma^d$ is dense in $\widehat{\Gamma}^d$. The word map $w_{\widehat{\Gamma}}$ is continuous, so the preimage $w_{\widehat{\Gamma}}^{-1}(1)$ is closed. If $w$ is a law for $\Gamma$, then $w_{\widehat{\Gamma}}^{-1}(1)$ contains the dense set $\Gamma^d$ and hence agrees with~$\widehat{\Gamma}^d$.
\end{proof}
In particular, every property that can be defined in terms of group laws is profinite.
\begin{corollary}\label{cor}
Being abelian, solvable (of derived length $\ell$) or nilpotent (of class $c$) are profinite properties. \end{corollary}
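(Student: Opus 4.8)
The plan is to realize each of the listed properties as ``being a law for $\Gamma$'' for a suitable word $w \in F_d$, and then invoke Proposition~\ref{prop:law}. Being abelian is the easy case, handled by the single word $[x_1,x_2]$ exactly as observed right after Proposition~\ref{prop:law} (equivalently it is nilpotency of class $1$, or solvability of derived length $1$).

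For nilpotency of class $\le c$ I would use the left-normed commutator word defined recursively by $\kappa_2 = [x_1,x_2]$ and $\kappa_{k+1}(x_1,\dots,x_{k+1}) = [\kappa_k(x_1,\dots,x_k),\,x_{k+1}] \in F_{k+1}$; for solvability of derived length $\le \ell$ I would use the iterated word $\delta_0 = x_1$, $\delta_{k+1}(x_1,\dots,x_{2^{k+1}}) = [\delta_k(x_1,\dots,x_{2^k}),\,\delta_k(x_{2^k+1},\dots,x_{2^{k+1}})] \in F_{2^{k+1}}$. The classical input to establish is that $\gamma_{c+1}(G)$, the $(c+1)$-st term of the lower central series of an arbitrary group $G$, is \emph{generated} (not merely normally generated) by the values of $\kappa_{c+1}$, and likewise that the $\ell$-th derived subgroup $G^{(\ell)}$ is generated by the values of $\delta_\ell$. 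Granting this, $G$ is nilpotent of class $\le c$ precisely when $\kappa_{c+1}$ is a law for $G$, and $G$ has derived length $\le \ell$ precisely when $\delta_\ell$ is a law for $G$, so Proposition~\ref{prop:law} immediately yields that these are profinite properties. Since a finitely generated solvable (resp.\ nilpotent) group has finite derived length (resp.\ nilpotency class), the same argument shows that being solvable and being nilpotent are profinite, and that the derived length and the nilpotency class are profinite invariants. Finally, ``being of derived length exactly $\ell$'' is the conjunction of the profinite property ``derived length $\le \ell$'' with the negation of ``derived length $\le \ell-1$'', and the negation of a profinite property within $\mathcal{RF}$ is again profinite (if $\widehat{\Gamma}_1 \cong \widehat{\Gamma}_2$ and $\Gamma_2$ had the property, so would $\Gamma_1$); likewise for nilpotency class, which covers the parenthetical refinements in the statement.

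The only non-formal step, and the one I would write out carefully, is the generation claim: why the verbal subgroup defined by $\kappa_{c+1}$ (resp.\ $\delta_\ell$) coincides with $\gamma_{c+1}(G)$ (resp.\ $G^{(\ell)}$) rather than just with its normal closure in $G$. The argument is an induction on $c$ (resp.\ $\ell$) using the commutator identities $[xy,z] = {}^x[y,z]\,[x,z]$ and $[x,yz] = [x,y]\,{}^y[x,z]$ together with the fact that the value set of any word $w$ is invariant under conjugation in $G$ (because $w(gg_1g^{-1},\dots,gg_dg^{-1}) = g\,w(g_1,\dots,g_d)\,g^{-1}$): this invariance lets one rewrite the conjugates ${}^w[s,t]$ occurring in the expansion of a commutator $[u,v]$ of products of word values again as word values, so no normal closure is needed. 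I expect this verbal-subgroup identification to be the main obstacle to a fully rigorous write-up; once it is in place, Corollary~\ref{cor} follows from Proposition~\ref{prop:law} with no further work.
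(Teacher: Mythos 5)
Your proposal is correct and takes essentially the same approach as the paper: the paper gives no explicit proof, treating the corollary as immediate from the observation that law-definable properties are profinite, and your argument is precisely the standard verification (via the left-normed commutator words $\kappa_{c+1}$ and the iterated commutator words $\delta_\ell$) that abelianness, nilpotency of class $\le c$, and solvability of derived length $\le \ell$ are each defined by a single law. One small remark on framing: the "generated rather than normally generated" distinction you flag is vacuous, since the value set of any word is conjugation-invariant and hence the verbal subgroup is automatically normal -- but you note this yourself, and the substantive point (that $\gamma_{c+1}(G)$ and $G^{(\ell)}$ are generated by the respective word values, proved by induction with the commutator identities) is exactly right.
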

Using Lemma \ref{lem:virtual} also being virtually abelian, virtually solvable or virtually nilpotent are profinite properties.
By Gromov's celebrated theorem \cite{Gromov81}, a finitely generated group is virtually nilpotent if and only if it has polynomial word growth. We deduce
\begin{corollary}\label{cor:word-growth-profinite}
Polynomial word growth is a profinite property.
\end{corollary}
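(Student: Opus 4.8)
The plan is to reduce the statement to Gromov's polynomial growth theorem via the profiniteness of virtual nilpotency, which has already been recorded above. Let $\Gamma_1,\Gamma_2 \in \mathcal{RF}$ with $\widehat{\Gamma}_1 \cong \widehat{\Gamma}_2$. First I would recall from the discussion following Corollary~\ref{cor} that being virtually nilpotent is a profinite property within $\mathcal{RF}$: this rests on Proposition~\ref{prop:law} (nilpotency of a fixed class $c$ is cut out by a single law) together with Lemma~\ref{lem:virtual}, where one only needs $\mathcal{RF}$ to be stable under passage to \emph{finite-index} subgroups — and it is, by the Reidemeister--Schreier index formula (finite-index subgroups of finitely generated groups are finitely generated) combined with the fact that subgroups of residually finite groups are residually finite. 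Hence $\Gamma_1$ is virtually nilpotent if and only if $\Gamma_2$ is.

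Next I would invoke Gromov's theorem~\cite{Gromov81}: a finitely generated group has polynomial word growth if and only if it is virtually nilpotent. Since every member of $\mathcal{RF}$ is finitely generated by definition, this applies to both $\Gamma_1$ and $\Gamma_2$. Chaining the three equivalences — $\Gamma_1$ has polynomial growth $\Leftrightarrow$ $\Gamma_1$ is virtually nilpotent $\Leftrightarrow$ $\Gamma_2$ is virtually nilpotent $\Leftrightarrow$ $\Gamma_2$ has polynomial growth — yields that polynomial word growth is a profinite property, which is the claim.

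I do not expect a genuine obstacle: Gromov's theorem is quoted as a black box and does all the substantive work, and the profiniteness of virtual nilpotency was already assembled above; the only points needing care are the bookkeeping around finite generation and the (mild) subgroup-closure hypothesis of Lemma~\ref{lem:virtual}, both addressed in the first paragraph. If one wished to push further and ask whether the \emph{degree} of polynomial growth is a profinite invariant — a stronger assertion not claimed here — the real difficulty would begin: by the Bass--Guivarc'h formula the degree equals $\sum_{i \ge 1} i\, r_i$, where $r_i$ is the torsion-free rank of the $i$-th lower central quotient of a finite-index nilpotent subgroup, and one would have to show that these ranks are recoverable from the profinite completion, which is where a non-trivial argument in the spirit of Pickel's analysis of the nilpotent genus~\cite{Pickel71} would be required.
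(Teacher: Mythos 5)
Your proposal matches the paper's own argument precisely: both reduce to Gromov's theorem via the profiniteness of virtual nilpotency, which in turn is assembled from Proposition~\ref{prop:law} (nilpotency of a fixed class is a law) and Lemma~\ref{lem:virtual}. You additionally flag (correctly) that Lemma~\ref{lem:virtual} only requires closure under \emph{finite-index} subgroups, which $\mathcal{RF}$ does satisfy — a small precision the paper leaves implicit.
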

The following problem suggests itself.
\begin{problem}
Prove Corollary \ref{cor:word-growth-profinite} without using Gromov's result.
\end{problem}
It is known from the work of Nekrashevych that in general the word growth of groups is not profinite \cite{Nekrashevych}.

\subsection{The first homology}
We have seen in Section \ref{sec:higherl2} that higher $\ell^2$-Betti numbers, and hence higher homology groups, are not profinite. However, the first homology is profinite and this has nice applications.
\begin{proposition}
Let $\Gamma \in \mathcal{RF}$.
The first homology $H_1(\Gamma,\Z)$ is profinite. In particular, the first Betti number $b_1(\Gamma,k) := \dim_k H^1(\Gamma,k)$ is a profinite invariant for every field $k$.
\end{proposition}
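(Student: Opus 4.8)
The plan is to reduce the statement to the abelianisation and then invoke Lemma~\ref{lem:profinite-commutes-ab} together with Example~\ref{ex:abelian}. Recall that $H_1(\Gamma,\Z)\cong\Gamma\ab=\Gamma/[\Gamma,\Gamma]$, which is a \emph{finitely generated} abelian group because $\Gamma$ is finitely generated. So it suffices to show that the isomorphism type of the finitely generated abelian group $\Gamma\ab$ is determined by the profinite completion $\widehat{\Gamma}$.

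First I would note that a continuous isomorphism $\widehat{\Gamma}_1\xrightarrow{\ \cong\ }\widehat{\Gamma}_2$ induces an isomorphism of profinite abelianisations $\widehat{\Gamma}_1\ab\cong\widehat{\Gamma}_2\ab$, since $G\mapsto G\ab=G/\overline{[G,G]}$ is functorial for continuous homomorphisms of profinite groups. By Lemma~\ref{lem:profinite-commutes-ab} we have $\widehat{\Gamma}_i\ab\cong\widehat{(\Gamma_i\ab)}$ for $i=1,2$, hence $\widehat{(\Gamma_1\ab)}\cong\widehat{(\Gamma_2\ab)}$. Since $\Gamma_1\ab$ and $\Gamma_2\ab$ are finitely generated abelian, Example~\ref{ex:abelian} (profinite rigidity of $\mathcal{AB}$ within $\mathcal{AB}$) forces $\Gamma_1\ab\cong\Gamma_2\ab$, i.e.\ $H_1(\Gamma_1,\Z)\cong H_1(\Gamma_2,\Z)$. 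This proves that $H_1(\,\cdot\,,\Z)$ is a profinite invariant.

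For the claim about Betti numbers, I would use that for trivial coefficients $H^1(\Gamma,k)\cong\operatorname{Hom}(\Gamma,k)\cong\operatorname{Hom}(\Gamma\ab,k)$, so that $b_1(\Gamma,k)$ depends only on the isomorphism type of $\Gamma\ab$: writing $\Gamma\ab\cong\Z^r\oplus\bigoplus_j\Z/m_j\Z$, one gets $b_1(\Gamma,k)=r$ if $\operatorname{char}k=0$ and $b_1(\Gamma,k)=r+\#\{\,j:\operatorname{char}k\mid m_j\,\}$ if $\operatorname{char}k>0$. By the previous paragraph this quantity is determined by $\widehat{\Gamma}$.

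I do not anticipate a real obstacle here; the only point requiring a little care is the bookkeeping that the profinite abelianisation is a genuine topological invariant and that Lemma~\ref{lem:profinite-commutes-ab} permits the passage between $\widehat{\Gamma}\ab$ and $\widehat{\Gamma\ab}$. A fully self-contained variant avoiding Lemma~\ref{lem:profinite-commutes-ab} would instead observe that by the universal property $\operatorname{Hom}(\Gamma,\Z/n\Z)=\operatorname{Hom}_{\mathrm{cont}}(\widehat{\Gamma},\Z/n\Z)$, so the finite numbers $n\mapsto\#\operatorname{Hom}(\Gamma,\Z/n\Z)$ form a profinite invariant, and these already pin down the finitely generated abelian group $\Gamma\ab$ exactly as in Example~\ref{ex:abelian}.
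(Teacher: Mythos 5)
Your proof is correct and follows essentially the same route as the paper's: identify $H_1(\Gamma,\Z)$ with $\Gamma\ab$, pass between $\widehat{\Gamma}\ab$ and $\widehat{\Gamma\ab}$ via Lemma~\ref{lem:profinite-commutes-ab}, and invoke the profinite rigidity of finitely generated abelian groups from Example~\ref{ex:abelian}. The extra details you supply, in particular the explicit computation of $b_1(\Gamma,k)$ from the structure of $\Gamma\ab$, are accurate and just flesh out what the paper leaves implicit.
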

\begin{proof}
As the first homology of $\Gamma$ is merely the abelianisation, the assertion follows from Lemma~\ref{lem:profinite-commutes-ab} and the profinite rigidity of finitely generated abelian groups (see Example \ref{ex:abelian}).
\end{proof}

Following \cite[Def. 5.4]{Morales24} we say that a finitely generated residually finite group $\Gamma$ is \emph{L\"ucky}, if it satisfies the L\"uck approximation in degree~$1$. This means, for every descending chain $N_1 \supseteq N_2 \dots$ of finite index normal subgroups in $\Gamma$ with $\bigcap_{j\in \N} N_j = \{1\}$ we have
\[
	\lim_{j \to \infty} \frac{b_1(N_j,\Q)}{|\Gamma:N_j|} = b_1^{(2)}(\Gamma).
\]
By L\"uck's approximation theorem \cite{Lueck94} all finitely presented groups are L\"ucky and it is a folklore observation, that the proof works using the weaker homological finiteness property $\mathrm{FP}_2$.
It was shown in \cite{Morales24} that all finitely generated residually-(amenable and locally indicable) groups are L\"ucky. 

\begin{theorem} \label{thm:first-l2-betti-profinite}
The first $\ell^2$-Betti number is a profinite invariant among all L\"ucky groups.
\end{theorem}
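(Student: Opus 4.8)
The plan is to move a suitable descending chain of finite index normal subgroups through the isomorphism of profinite completions and then to play the Lück approximation in degree one against itself on the two sides. So suppose \(\Gamma_1, \Gamma_2\) are L\"ucky with a continuous isomorphism \(\Psi\colon \widehat{\Gamma}_1 \xrightarrow{\cong} \widehat{\Gamma}_2\), and we want \(b_1^{(2)}(\Gamma_1) = b_1^{(2)}(\Gamma_2)\).

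First I would choose the chain in \(\Gamma_1\) with care. Because \(\Gamma_1\) is finitely generated, it has only finitely many subgroups of any given index, so its finite index normal subgroups can be enumerated; taking successive intersections produces a descending chain \(N_1 \supseteq N_2 \supseteq \cdots\) of finite index normal subgroups which is \emph{cofinal} among all finite index normal subgroups of \(\Gamma_1\), and with \(\bigcap_j N_j = \{1\}\) by residual finiteness. Next I would transport it: by Lemma~\ref{lem:correspondence} each \(\overline{N}_j\) is an open normal subgroup of \(\widehat{\Gamma}_1\) with \(\widehat{N}_j = \overline{N}_j\), and cofinality of \((N_j)\) makes \((\overline{N}_j)\) cofinal among the open normal subgroups of \(\widehat{\Gamma}_1\), whence \(\bigcap_j \overline{N}_j = \{1\}\) since \(\widehat{\Gamma}_1\) is the inverse limit of its finite quotients. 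Put \(U_j = \Psi(\overline{N}_j)\) and \(M_j = U_j \cap \Gamma_2\); again by Lemma~\ref{lem:correspondence}, \(M_j \fin \Gamma_2\) and \(\widehat{M}_j = \overline{M}_j = U_j \cong \overline{N}_j = \widehat{N}_j\), the chain \((M_j)\) is descending, and \(\bigcap_j M_j = \Psi\bigl(\bigcap_j \overline{N}_j\bigr) \cap \Gamma_2 = \{1\}\).

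Then I would compare the two approximating sequences termwise. Each \(N_j\) and \(M_j\) lies in \(\mathcal{RF}\), being a finite index subgroup of a finitely generated residually finite group, so profiniteness of the first Betti number (the proposition on the first homology recalled above) yields \(b_1(N_j,\Q) = b_1(M_j,\Q)\); and since closure preserves index, \([\Gamma_1:N_j] = [\widehat{\Gamma}_1:\overline{N}_j] = [\widehat{\Gamma}_2:U_j] = [\Gamma_2:M_j]\). Hence the sequences \(b_1(N_j,\Q)/[\Gamma_1:N_j]\) and \(b_1(M_j,\Q)/[\Gamma_2:M_j]\) agree for every \(j\). Applying the L\"ucky hypothesis for \(\Gamma_1\) along \((N_j)\) and for \(\Gamma_2\) along \((M_j)\) --- both chains having trivial intersection --- the common value of the limit is simultaneously \(b_1^{(2)}(\Gamma_1)\) and \(b_1^{(2)}(\Gamma_2)\), which is the assertion.

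I expect the only real obstacle to be the middle step: one must guarantee that the chain pushed into \(\Gamma_2\) still has trivial intersection, for otherwise the L\"ucky hypothesis would not be available there. This is precisely why it matters to begin not with an arbitrary chain of trivial intersection in \(\Gamma_1\) but with a cofinal one: cofinality is a statement about the lattice of finite index normal subgroups, it is preserved by the bijection of Lemma~\ref{lem:correspondence} with open normal subgroups, hence carried by \(\Psi\) to \(\widehat{\Gamma}_2\) and pulled back to \(\Gamma_2\). With that in place the argument is a routine assembly of the correspondence lemma, index invariance, and the already-established profiniteness of \(H_1(\,\cdot\,;\Z)\).
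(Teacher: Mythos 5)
Your argument is correct and follows essentially the same route as the paper: transport a descending chain of finite index normal subgroups through the isomorphism of profinite completions via Lemma~\ref{lem:correspondence}, match first Betti numbers termwise using profiniteness of $H_1$, match indices, and invoke the L\"ucky hypothesis on both sides. One detail you handle with more explicit care than the paper's write-up: to invoke the L\"ucky property for $\Gamma_2$ along $(M_j)$ one must know $\bigcap_j M_j = \{1\}$, and this is not automatic from $\bigcap_j N_j = \{1\}$ alone (for a non-cofinal chain one can have $\bigcap_j \overline{N}_j \neq 1$ in $\widehat{\Gamma}_1$, e.g.\ $N_j = 2^j\Z$ in $\Z$, where $\bigcap_j \overline{N}_j = \prod_{p\neq 2}\Z_p$). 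Your move of choosing the chain cofinal at the outset --- possible because a finitely generated group has only finitely many subgroups of each index --- forces $\bigcap_j \overline{N}_j = 1$ and hence $\bigcap_j M_j = 1$, closing this gap cleanly. The paper leaves the choice of chain unqualified and omits this check; your version is the tighter one.
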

\begin{proof}
Assume that $\Gamma, \Delta$ are L\"ucky and let $\Psi \colon \widehat{\Gamma} \to \widehat{\Delta}$ be an isomorphism of profinite groups.
Let $(N_j)_{j \in \N}$ be a descending chain of finite index normal subgroups of $\Gamma$ with $\bigcap_{j\in \N} N_j = \{1\}$. Then $\overline{N_j} \trianglelefteq_o \widehat{\Gamma}$ is an open normal subgroup. We consider the corresponding normal subgroup $M_j := \Psi(\overline{N_j}) \cap \Delta$. Observe that $|\Delta:M_j| = |\widehat{\Gamma}:\overline{N_j}| = |\Gamma:N_j|$. Since $\Delta$ is dense in $\widehat{\Delta}$ we find (using Lemma \ref{lem:correspondence}):
\[\widehat{M_j} \cong \overline{M_j} = \Psi(\overline{N_j}) \cong \overline{N_j} \cong \widehat{N_j}.\]
We deduce that $b_1(M_j,\Q) = b_1(N_j,\Q)$. Since $\Gamma,\Delta$ are L\"ucky, we deduce
\[
	b_1^{(2)}(\Gamma) = \lim_{j\to \infty} \frac{b_1(N_j,\Q)}{|\Gamma:N_j|} =  \lim_{j \to \infty} \frac{b_1(M_j,\Q)}{|\Delta:M_j|} = b_1^{(2)}(\Delta). \qedhere
\]
\end{proof}

 Non-L\"ucky groups exist, but the list of examples is rather short. L\"uck and Osin \cite{LueckOsin} constructed finitely generated, residually finite, infinite torsion groups with positive first $\ell^2$-Betti number; since the rational first Betti number of a torsion group vanishes, these groups cannot be L\"ucky. The torsion groups from Theorem~\ref{thm:hji} provide further examples. Both constructions provide groups with a lot of torsion homology; so in some sense the positive first $\ell^2$-Betti number could still be ``expected'' from looking at the profinite completion. In particular, the following problem is still open.
 \begin{problem}
Is the first $\ell^2$-Betti number a profinite invariant among all finitely generated residually finite groups?
\end{problem}

\subsection{Uniform amenability}
As mentioned above, it was proven in \cite{KS-amenable} that amenability is not profinite. In 1972 Keller introduced the stronger notion of uniform amenability by bounding the size of F{\o}lner sets:
\begin{definition}\label{def:uniform-foelner}
Let $G$ be a group.
We say that $G$ is $m$-\emph{uniformly amenable}
if for every $\varepsilon > 0$, and every finite subset $S \subseteq G$,
there exists a finite set $F \subseteq G$ satisfying
\begin{enumerate}
 \item $|F| \leq m(\varepsilon, |S|)$ and
 \item $|SF| \leq (1+\varepsilon)|F|$.
\end{enumerate}
for a fixed function $m\colon \R \times \N \to \N$. If such a function $m$ exists, we say that $G$ is \emph{uniformly amenable}.
\end{definition}
In a similar way, one can define uniformly amenable families of groups using a fixed function $m$; see \cite{KS-amenable}.
It was shown in \cite{KS-amenable} that a residually finite group is uniformly amenable if and only if the family of its finite quotients is uniformly amenable. From this one can deduce:
\begin{theorem}[Corollary 2.11 in \cite{KS-amenable}]\label{thm:uniformly-amenable}
Uniform amenability is a profinite property.
\end{theorem}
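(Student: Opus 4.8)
The plan is to bootstrap from the characterization recalled just above: a residually finite group \(\Gamma\) is \(m\)-uniformly amenable precisely when its family of finite quotient groups is \(m\)-uniformly amenable, for one and the same function \(m\colon \R\times\N\to\N\). Combined with the observation that isomorphic profinite completions pin down the set of finite quotients, this should yield the theorem with no further ideas required.

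First I would take \(\Gamma_1,\Gamma_2\in\mathcal{RF}\) with \(\widehat{\Gamma}_1\cong\widehat{\Gamma}_2\). Since the finite quotients of a residually finite group are exactly the finite continuous quotients of its profinite completion (cf.\ the correspondence in Lemma~\ref{lem:correspondence} and \cite{Dixonetal}), the groups \(\Gamma_1\) and \(\Gamma_2\) have the same finite quotients up to isomorphism; write \(\mathcal{Q}\) for this common set of isomorphism classes. Next I would record that uniform amenability of a \emph{family} of finite groups in the sense of Definition~\ref{def:uniform-foelner} is insensitive both to multiplicities and to replacing members by isomorphic copies, because the defining inequalities \(|F|\le m(\varepsilon,|S|)\) and \(|SF|\le(1+\varepsilon)|F|\) are preserved under group isomorphism. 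Hence the statement ``\(\mathcal{Q}\) is \(m\)-uniformly amenable'' is unambiguous.

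Now suppose \(\Gamma_1\) is uniformly amenable, say \(m\)-uniformly amenable. By the characterization from \cite{KS-amenable} the family of finite quotients of \(\Gamma_1\) — that is, \(\mathcal{Q}\) — is \(m\)-uniformly amenable; hence so is the family of finite quotients of \(\Gamma_2\), which represents the same isomorphism classes \(\mathcal{Q}\); applying the characterization in the reverse direction, \(\Gamma_2\) is \(m\)-uniformly amenable. The roles of \(\Gamma_1\) and \(\Gamma_2\) are symmetric, so either both groups are uniformly amenable or neither is, which is exactly the assertion that uniform amenability is a profinite property.

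I do not anticipate a genuine obstacle, since the essential input — the equivalence between uniform amenability of \(\Gamma\) and of its family of finite quotients — is supplied by \cite{KS-amenable} and may be used as a black box. The only point requiring attention is to carry the \emph{same} function \(m\) through both directions of that equivalence, rather than the weaker ``uniformly amenable for some function''; it is this uniform bookkeeping that lets a concrete \(m\) travel from \(\Gamma_1\) through \(\mathcal{Q}\) to \(\Gamma_2\) and makes the chain of implications close.
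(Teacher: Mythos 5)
Your proof is correct and takes essentially the same route the paper has in mind: invoke the characterization from \cite{KS-amenable} that a residually finite group is uniformly amenable if and only if its family of finite quotients is, observe that groups with isomorphic profinite completions have the same finite quotients, and close the loop. One small remark: the insistence in your last paragraph on carrying the \emph{same} function \(m\) through both directions is unnecessary — since ``uniformly amenable'' asks only for the existence of \emph{some} function \(m\), the bare biconditional already makes the chain of implications close even if the cited characterization changed the bound along the way.
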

It follows from work of Kesten (see also \cite[Corollary 2.12]{KS-amenable}) that every uniformly amenable group satisfies a non-trivial law. If the answer to the following question is affirmative, then Theorem \ref{thm:uniformly-amenable} is equivalent to Proposition \ref{prop:law} above.
\begin{question}
Let $\Gamma$ be a finitely generated residually finite group. Assume that $\Gamma$ satisfies a non-trivial law. Is $\Gamma$ uniformly amenable?
\end{question}

\begin{bibdiv}[References] 

  \begin{biblist}
\bib{Aka:kazhdan}{article}{
   author={Aka, Menny},
   title={Profinite completions and Kazhdan's property (T)},
   journal={Groups Geom. Dyn.},
   volume={6},
   date={2012},
   number={2},
   pages={221--229},
   issn={1661-7207},
   review={\MR{2914858}},
 }

 \bib{Bader-Sauer:higher-kazhdan}{article}{
   author={Bader, Uri},
   author={Sauer, Roman},
   title={Higher Kazhdan property and unitary cohomology of arithmetic groups},
   date={2023},
   journal={e-print},
   note={\arXiv{2308.06517}},
 }
 
 \bib{Bartholdi-Grigorchuk-Sunic}{article}{
   author={Bartholdi, Laurent},
   author={Grigorchuk, Rostislav I.},
   author={\v Suni\'k, Zoran},
   title={Branch groups},
   conference={
      title={Handbook of algebra, Vol. 3},
   },
   book={
      series={Handb. Algebr.},
      volume={3},
      publisher={Elsevier/North-Holland, Amsterdam},
   },
   isbn={0-444-51264-0},
   date={2003},
   pages={989--1112},
   review={\MR{2035113}},
}

  \bib{Bartholdi-etal12}{article}{
   author={Bartholdi, Laurent},
   author={Siegenthaler, Olivier},
   author={Zalesskii, Pavel},
   title={The congruence subgroup problem for branch groups},
   journal={Israel J. Math.},
   volume={187},
   date={2012},
   pages={419--450},
   issn={0021-2172},
   review={\MR{2891709}},
} 

\bib{BKN10}{article}{
   author={Bartholdi, Laurent},
   author={Kaimanovich, Vadim A.},
   author={Nekrashevych, Volodymyr V.},
   title={On amenability of automata groups},
   journal={Duke Math. J.},
   volume={154},
   date={2010},
   number={3},
   pages={575--598},
   issn={0012-7094},
   review={\MR{2730578}},
}

  \bib{Baumslag74}{article}{
   author={Baumslag, Gilbert},
   title={Residually finite groups with the same finite images},
   journal={Compositio Math.},
   volume={29},
   date={1974},
   pages={249--252},
   issn={0010-437X},
   review={\MR{0357615}},
}

\bib{BridsonGrunewald04}{article}{
   author={Bridson, Martin R.},
   author={Grunewald, Fritz J.},
   title={Grothendieck's problems concerning profinite completions and
   representations of groups},
   journal={Ann. of Math. (2)},
   volume={160},
   date={2004},
   number={1},
   pages={359--373},
   issn={0003-486X},
   review={\MR{2119723}},
}

\bib{BMRRS20}{article}{
   author={Bridson, M. R.},
   author={McReynolds, D. B.},
   author={Reid, A. W.},
   author={Spitler, R.},
   title={Absolute profinite rigidity and hyperbolic geometry},
   journal={Ann. of Math. (2)},
   volume={192},
   date={2020},
   number={3},
   pages={679--719},
   issn={0003-486X},
   review={\MR{4172619}},
}

\bib{BMRRS21}{article}{
   author={Bridson, Martin R.},
   author={McReynolds, D. B.},
   author={Reid, Alan W.},
   author={Spitler, Ryan},
   title={On the profinite rigidity of triangle groups},
   journal={Bull. Lond. Math. Soc.},
   volume={53},
   date={2021},
   number={6},
   pages={1849--1862},
   issn={0024-6093},
   review={\MR{4386043}},
}

\bib{Cheetham-West-et-al:property-fa}{article}{
  author={Cheetham-West, Tamunonye},
  author={Lubotzky, Alexander},
  author={Reid, Alan W.},
  author={Spitler, Ryan},
  title={Property FA is not a profinite property},
  journal={Groups Geom. Dyn.},
  date={2024},
  volume={online first},
}

\bib{Chinburg-et-al:geodesics}{article}{
   author={Chinburg, T.},
   author={Hamilton, E.},
   author={Long, D. D.},
   author={Reid, A. W.},
   title={Geodesics and commensurability classes of arithmetic hyperbolic
   3-manifolds},
   journal={Duke Math. J.},
   volume={145},
   date={2008},
   number={1},
   pages={25--44},
   issn={0012-7094},
   review={\MR{2451288}},
}

  \bib{Dixonetal}{article}{
   author={Dixon, John D.},
   author={Formanek, Edward W.},
   author={Poland, John C.},
   author={Ribes, Luis},
   title={Profinite completions and isomorphic finite quotients},
   journal={J. Pure Appl. Algebra},
   volume={23},
   date={1982},
   number={3},
   pages={227--231},
   issn={0022-4049},
   review={\MR{0644274}},
}

\bib{Echtler-Kammeyer:bounded}{article}{
   author={Echtler, Daniel},
   author={Kammeyer, Holger},
   title={Bounded cohomology is not a profinite invariant},
   journal={Canad. Math. Bull.},
   volume={67},
   date={2024},
   number={2},
   pages={379--390},
   issn={0008-4395},
   review={\MR{4751514}},
 }
 
 \bib{Ershov-Jaikin-Kassabov}{article}{
   author={Ershov, Mikhail},
   author={Jaikin-Zapirain, Andrei},
   author={Kassabov, Martin},
   title={Property $(T)$ for groups graded by root systems},
   journal={Mem. Amer. Math. Soc.},
   volume={249},
   date={2017},
   number={1186},
   pages={v+135},
   issn={0065-9266},
   isbn={978-1-4704-2604-0; 978-1-4704-4139-5},
   review={\MR{3724373}},
}

\bib{Frigerio:bounded-cohomology}{book}{
   author={Frigerio, Roberto},
   title={Bounded cohomology of discrete groups},
   series={Mathematical Surveys and Monographs},
   volume={227},
   publisher={American Mathematical Society, Providence, RI},
   date={2017},
   pages={xvi+193},
   isbn={978-1-4704-4146-3},
   review={\MR{3726870}},
 }
 
\bib{Garrido16}{article}{
   author={Garrido, Alejandra},
   title={On the congruence subgroup problem for branch groups},
   journal={Israel J. Math.},
   volume={216},
   date={2016},
   number={1},
   pages={1--13},
   issn={0021-2172},
   review={\MR{3556961}},
}

\bib{Grigorchuk80}{article}{
   author={Grigorchuk, R. I.},
   title={On Burnside's problem on periodic groups},
   language={Russian},
   journal={Funktsional. Anal. i Prilozhen.},
   volume={14},
   date={1980},
   number={1},
   pages={53--54},
   issn={0374-1990},
   review={\MR{0565099}},
}

\bib{Grigorchuk00}{article}{
   author={Grigorchuk, R. I.},
   title={Just infinite branch groups},
   conference={
      title={New horizons in pro-$p$ groups},
   },
   book={
      series={Progr. Math.},
      volume={184},
      publisher={Birkh\"auser Boston, Boston, MA},
   },
   isbn={0-8176-4171-8},
   date={2000},
   pages={121--179},
   review={\MR{1765119}},
}

\bib{Gromov81}{article}{
   author={Gromov, Mikhael},
   title={Groups of polynomial growth and expanding maps},
   journal={Inst. Hautes \'Etudes Sci. Publ. Math.},
   number={53},
   date={1981},
   pages={53--73},
   issn={0073-8301},
   review={\MR{0623534}},
}

\bib{Grothendieck70}{article}{
   author={Grothendieck, Alexander},
   title={Repr\'esentations lin\'eaires et compactification profinie des
   groupes discrets},
   language={French, with English summary},
   journal={Manuscripta Math.},
   volume={2},
   date={1970},
   pages={375--396},
   issn={0025-2611},
   review={\MR{0262386}},
}
  
\bib{Higman51}{article}{
   author={Higman, Graham},
   title={A finitely generated infinite simple group},
   journal={J. London Math. Soc.},
   volume={26},
   date={1951},
   pages={61--64},
   issn={0024-6107},
   review={\MR{0038348}},
}

\bib{Jaikin-Kionke}{article}{
   author={Jaikin-Zapirain, Andrei},
   author={Kionke, Steffen},
   title={Asymptotic invariants of residually finite just infinite groups},
   journal={Eur. J. Math.},
   volume={11},
   date={2025},
   number={1},
   pages={Paper No. 3, 14},
   issn={2199-675X},
   review={\MR{4842948}},
}

\bib{JNdlS}{article}{
   author={Juschenko, Kate},
   author={Nekrashevych, Volodymyr},
   author={de la Salle, Mikael},
   title={Extensions of amenable groups by recurrent groupoids},
   journal={Invent. Math.},
   volume={206},
   date={2016},
   number={3},
   pages={837--867},
   issn={0020-9910},
   review={\MR{3573974}},
 }

 \bib{Kammeyer:l2}{book}{
   author={Kammeyer, Holger},
   title={Introduction to $\ell^2$-invariants},
   series={Lecture Notes in Mathematics},
   volume={2247},
   publisher={Springer, Cham},
   date={2019},
   pages={viii+181},
   isbn={978-3-030-28296-7},
   isbn={978-3-030-28297-4},
   review={\MR{3971279}},
 }

 \bib{Kammeyer:absolutely}{article}{
   author={Kammeyer, Holger},
   title={On absolutely profinitely solitary lattices in higher rank Lie
   groups},
   journal={Proc. Amer. Math. Soc.},
   volume={151},
   date={2023},
   number={4},
   pages={1801--1809},
   issn={0002-9939},
   review={\MR{4550371}},
 }
 
 \bib{Kammeyer-Kionke:adelic}{article}{
   author={Kammeyer, Holger},
   author={Kionke, Steffen},
   title={Adelic superrigidity and profinitely solitary lattices},
   journal={Pacific J. Math.},
   volume={313},
   date={2021},
   number={1},
   pages={137--158},
   issn={0030-8730},
   review={\MR{4313430}},
 }
 
 \bib{Kammeyer-Kionke:gassmann}{article}{
   author={Kammeyer, Holger},
   author={Kionke, Steffen},
   title={Gassmann triples with special cycle types and applications},
   journal={Proc. Edinb. Math. Soc. (2)},
   volume={67},
   date={2024},
   number={4},
   pages={1115--1124},
   issn={0013-0915},
   review={\MR{4832987}},
}

\bib{Kammeyer-Kionke:lattices}{article}{
   author={Kammeyer, Holger},
   author={Kionke, Steffen},
   title={On the profinite rigidity of lattices in higher rank Lie groups},
   journal={Math. Proc. Cambridge Philos. Soc.},
   volume={174},
   date={2023},
   number={2},
   pages={369--384},
   issn={0305-0041},
   review={\MR{4545210}},
 }
 
 \bib{KKK:volume}{article}{
   author={Kammeyer, Holger},
   author={Kionke, Steffen},
   author={K\"ohl, Ralf},
   title={Profiniteness of higher rank volume},
   date={2024},
   journal={e-print},
   note={\arXiv{2412.13056}},
 }

 \bib{KKRS:profinite-invariants}{article}{
   author={Kammeyer, Holger},
   author={Kionke, Steffen},
   author={Raimbault, Jean},
   author={Sauer, Roman},
   title={Profinite invariants of arithmetic groups},
   journal={Forum Math. Sigma},
   volume={8},
   date={2020},
   pages={Paper No. e54, 22},
   review={\MR{4176758}},
}

\bib{Kammeyer-Sauer:spinor}{article}{
   author={Kammeyer, Holger},
   author={Sauer, Roman},
   title={$S$-arithmetic spinor groups with the same finite quotients and
   distinct $\ell^2$-cohomology},
   journal={Groups Geom. Dyn.},
   volume={14},
   date={2020},
   number={3},
   pages={857--869},
   issn={1661-7207},
   review={\MR{4167024}},
 }

  \bib{Kammeyer-Serafini:euler}{article}{
   author={Kammeyer, Holger},
   author={Serafini, Giada},
   title={On the Euler characteristic of \(S\)-arithmetic groups},
   date={2024},
   journal={e-print},
   note={\arXiv{2405.05050}},
 }

\bib{Kammeyer-Spitler}{article}{
   author={Kammeyer, Holger},
   author={Spitler, Ryan},
   title={Galois cohomology and profinitely solitary Chevalley groups},
   journal={Math. Ann.},
   volume={390},
   date={2024},
   number={2},
   pages={2497--2511},
   issn={0025-5831},
   review={\MR{4801834}},
}

\bib{KassabovNikolov}{article}{
   author={Kassabov, Martin},
   author={Nikolov, Nikolay},
   title={Cartesian products as profinite completions},
   journal={Int. Math. Res. Not.},
   date={2006},
   pages={Art. ID 72947, 17},
   issn={1073-7928},
   review={\MR{2264720}},
}
 
 \bib{KS-amenable}{article}{
   author={Kionke, Steffen},
   author={Schesler, Eduard},
   title={Amenability and profinite completions of finitely generated
   groups},
   journal={Groups Geom. Dyn.},
   volume={17},
   date={2023},
   number={4},
   pages={1235--1258},
   issn={1661-7207},
   review={\MR{4641368}},
}

\bib{KS-embedding}{article}{
   author={Kionke, Steffen},
   author={Schesler, Eduard},
   title={Realising residually finite groups as subgroups of branch groups},
   journal={Bull. Lond. Math. Soc.},
   volume={56},
   date={2024},
   number={2},
   pages={536--550},
   issn={0024-6093},
   review={\MR{4711568}},
}
 
 \bib{KS-hereditarily}{article}{
 author={Kionke, Steffen},
   author={Schesler, Eduard},
   title={Hereditarily just-infinite torsion groups with positive first $\ell^2$-Betti number},
   date={2024},
   journal={e-print},
   note={\arXiv{2401.04542}},
 }
 
 \bib{KS-telescopes}{article}{
   author={Kionke, Steffen},
   author={Schesler, Eduard},
   title={From telescopes to frames and simple groups},
   date={2024},
   journal={J. Comb. Algebra},
   note={DOI 10.4171/JCA/103},

 }
 
 \bib{Klingen:similarities}{book}{
   author={Klingen, Norbert},
   title={Arithmetical similarities},
   series={Oxford Mathematical Monographs},
   note={Prime decomposition and finite group theory},
   publisher={The Clarendon Press, Oxford University Press, New York},
   date={1998},
   pages={x+275},
   isbn={0-19-853598-8},
   review={\MR{1638821}},
}

\bib{Liu:almost}{article}{
   author={Liu, Yi},
   title={Finite-volume hyperbolic 3-manifolds are almost determined by
   their finite quotient groups},
   journal={Invent. Math.},
   volume={231},
   date={2023},
   number={2},
   pages={741--804},
   issn={0020-9910},
   review={\MR{4542705}},
 }
 
\bib{Lubotzky-tau}{article}{
   author={Lubotzky, Alex},
   title={What is$\dots$property $(\tau)$?},
   journal={Notices Amer. Math. Soc.},
   volume={52},
   date={2005},
   number={6},
   pages={626--627},
   issn={0002-9920},
   review={\MR{2147485}},
}

\bib{Lubotzky:finiteness-properties}{article}{
   author={Lubotzky, Alexander},
   title={Finiteness properties and profinite completions},
   journal={Bull. Lond. Math. Soc.},
   volume={46},
   date={2014},
   number={1},
   pages={103--110},
   issn={0024-6093},
   review={\MR{3161766}},
 }
 
\bib{Lueck94}{article}{
   author={L\"uck, W.},
   title={Approximating $L^2$-invariants by their finite-dimensional
   analogues},
   journal={Geom. Funct. Anal.},
   volume={4},
   date={1994},
   number={4},
   pages={455--481},
   issn={1016-443X},
   review={\MR{1280122}},
}

\bib{LueckOsin}{article}{
   author={L\"uck, W.},
   author={Osin, D.},
   title={Approximating the first $L^2$-Betti number of residually finite
   groups},
   journal={J. Topol. Anal.},
   volume={3},
   date={2011},
   number={2},
   pages={153--160},
   issn={1793-5253},
   review={\MR{2819192}},
 }

 \bib{Lueck:l2}{book}{
   author={L\"uck, Wolfgang},
   title={$L^2$-invariants: theory and applications to geometry and
   $K$-theory},
   series={Ergebnisse der Mathematik und ihrer Grenzgebiete. 3. Folge. A
   Series of Modern Surveys in Mathematics},
   volume={44},
   publisher={Springer-Verlag, Berlin},
   date={2002},
   pages={xvi+595},
   isbn={3-540-43566-2},
   review={\MR{1926649}},
 }
 
\bib{Monod-Shalom:cocycle}{article}{
   author={Monod, Nicolas},
   author={Shalom, Yehuda},
   title={Cocycle superrigidity and bounded cohomology for negatively curved
   spaces},
   journal={J. Differential Geom.},
   volume={67},
   date={2004},
   number={3},
   pages={395--455},
   issn={0022-040X},
   review={\MR{2153026}},
 }
 
\bib{Morales24}{article}{
   author={Morales, Ismael},
   title={On the profinite rigidity of free and surface groups},
   journal={Math. Ann.},
   volume={390},
   date={2024},
   number={1},
   pages={1507--1540},
   issn={0025-5831},
   review={\MR{4800944}},
}

\bib{Nekrashevych}{article}{
   author={Nekrashevych, Volodymyr},
   title={An uncountable family of 3-generated groups with isomorphic
   profinite completions},
   journal={Internat. J. Algebra Comput.},
   volume={24},
   date={2014},
   number={1},
   pages={33--46},
   issn={0218-1967},
   review={\MR{3189664}},
}

\bib{Neumann86}{article}{
   author={Neumann, Peter M.},
   title={Some questions of Edjvet and Pride about infinite groups},
   journal={Illinois J. Math.},
   volume={30},
   date={1986},
   number={2},
   pages={301--316},
   issn={0019-2082},
   review={\MR{0840129}},
}

\bib{Osin2011}{article}{
   author={Osin, D.},
   title={Rank gradient and torsion groups},
   journal={Bull. Lond. Math. Soc.},
   volume={43},
   date={2011},
   number={1},
   pages={10--16},
   issn={0024-6093},
   review={\MR{2765544}},
}

\bib{Pickel71}{article}{
   author={Pickel, P. F.},
   title={Finitely generated nilpotent groups with isomorphic finite
   quotients},
   journal={Trans. Amer. Math. Soc.},
   volume={160},
   date={1971},
   pages={327--341},
   issn={0002-9947},
   review={\MR{0291287}},
}

\bib{Pickel73}{article}{
   author={Pickel, P. F.},
   title={Nilpotent-by-finite groups with isomorphic finite quotients},
   journal={Trans. Amer. Math. Soc.},
   volume={183},
   date={1973},
   pages={313--325},
   issn={0002-9947},
   review={\MR{0384940}},
}

\bib{Platonov-Rapinchuk:algebraic-groups}{book}{
   author={Platonov, Vladimir},
   author={Rapinchuk, Andrei},
   title={Algebraic groups and number theory},
   series={Pure and Applied Mathematics},
   volume={139},
   note={Translated from the 1991 Russian original by Rachel Rowen},
   publisher={Academic Press, Inc., Boston, MA},
   date={1994},
   pages={xii+614},
   isbn={0-12-558180-7},
   review={\MR{1278263}},
 }
 
\bib{PlatonovTavgen}{article}{
   author={Platonov, V. P.},
   author={Tavgen\cprime, O. I.},
   title={On the Grothendieck problem of profinite completions of groups},
   language={Russian},
   journal={Dokl. Akad. Nauk SSSR},
   volume={288},
   date={1986},
   number={5},
   pages={1054--1058},
   issn={0002-3264},
   review={\MR{0852649}},
}

\bib{Reid:ICM} {article}{
   author={Reid, Alan W.},
   title={Profinite rigidity},
   conference={
      title={Proceedings of the International Congress of
      Mathematicians---Rio de Janeiro 2018. Vol. II. Invited lectures},
   },
   book={
      publisher={World Sci. Publ., Hackensack, NJ},
   },
   isbn={978-981-3272-91-0},
   isbn={978-981-3272-87-3},
   date={2018},
   pages={1193--1216},
   review={\MR{3966805}},
 }

 \bib{Ribes-Zalesskii:profinite-groups}{book}{
   author={Ribes, Luis},
   author={Zalesskii, Pavel},
   title={Profinite groups},
   series={Ergebnisse der Mathematik und ihrer Grenzgebiete. 3. Folge. A
   Series of Modern Surveys in Mathematics},
   volume={40},
   publisher={Springer-Verlag, Berlin},
   date={2000},
   pages={xiv+435},
   isbn={3-540-66986-8},
   review={\MR{1775104}},
   doi={10.1007/978-3-662-04097-3},
 }
 
\bib{Segal01}{article}{
   author={Segal, Dan},
   title={The finite images of finitely generated groups},
   journal={Proc. London Math. Soc. (3)},
   volume={82},
   date={2001},
   number={3},
   pages={597--613},
   issn={0024-6115},
   review={\MR{1816690}},
}

\bib{Spitler:profinite}{thesis}{
author = {Ryan F. Spitler},
title = {Profinite Completions and Representations of Finitely Generated Groups},
year = {2019},
note = {PhD thesis},
organization = {Purdue University},
review = {\url{http://doi.org/10.25394/PGS.9117068.v1}}
}

\bib{Trinks:Arithmetisch}{article}{
   author={Trinks, W.},
   title={Arithmetisch {\"a}hnliche Zahlk{\"o}rper},
   journal={Diplomarbeit (TH Karlsruhe)},
   note={unpublished},
   date={1969},
}

\bib{Weiss:non-rigidity}{article}{
   author={Weiss Behar, A. Y.},
   title={Profinite non-rigidity of arithmetic groups},
   date={2024},
  journal={Groups Geom. Dyn.},
  volume={online first},
  doi={\href{http://doi.org/10.4171/GGD/815}{10.4171/GGD/815}},
}

\end{biblist}
\end{bibdiv}

\end{document}